\numberwithin{equation}{section}
\renewcommand\subsection{\@startsection{subsection}{2}%
  \z@{-.5\linespacing\@plus-.7\linespacing}{.3\linespacing}%
  {\normalfont\bfseries}}
\def\1{\raisebox{2pt}{\rm{$\chi$}}}
\newtheorem{theorem}[equation]{Theorem}
\newtheorem{corollary}[equation]{Corollary}
\newtheorem{lemma}[equation]{Lemma}
\theoremstyle{definition}
\newtheorem{definition}[equation]{Definition}
\newtheorem{remark}[equation]{Remark}
\newtheorem{question}[equation]{Question}
\newcommand{\R}{{\mathbb R}}
\newcommand{\N}{{\mathbb N}}
\newcommand\diam{\operatorname{diam}}
\def\1{\raisebox{2pt}{\rm{$\chi$}}}
\newcommand{\Lip}{\operatorname{Lip}}
\def\vint_#1{\mathchoice%
        {\mathop{\kern 0.2em\vrule width 0.6em height 0.69678ex depth -0.58065ex
                \kern -0.8em \intop}\nolimits_{\kern -0.4em#1}}%
        {\mathop{\kern 0.1em\vrule width 0.5em height 0.69678ex depth -0.60387ex
                \kern -0.6em \intop}\nolimits_{#1}}%
        {\mathop{\kern 0.1em\vrule width 0.5em height 0.69678ex
            depth -0.60387ex
                \kern -0.6em \intop}\nolimits_{#1}}%
        {\mathop{\kern 0.1em\vrule width 0.5em height 0.69678ex depth -0.60387ex
                \kern -0.6em \intop}\nolimits_{#1}}}
\def\vintslides_#1{\mathchoice%
        {\mathop{\kern 0.1em\vrule width 0.5em height 0.697ex depth -0.581ex
                \kern -0.6em \intop}\nolimits_{\kern -0.4em#1}}%
        {\mathop{\kern 0.1em\vrule width 0.3em height 0.697ex depth -0.604ex
                \kern -0.4em \intop}\nolimits_{#1}}%
        {\mathop{\kern 0.1em\vrule width 0.3em height 0.697ex depth -0.604ex
                \kern -0.4em \intop}\nolimits_{#1}}%
        {\mathop{\kern 0.1em\vrule width 0.3em height 0.697ex depth -0.604ex
                \kern -0.4em \intop}\nolimits_{#1}}}
\newcommand{\aveint}[2]{\mathchoice%
        {\mathop{\kern 0.2em\vrule width 0.6em height 0.69678ex depth -0.58065ex
                \kern -0.8em \intop}\nolimits_{\kern -0.45em#1}^{#2}}%
        {\mathop{\kern 0.1em\vrule width 0.5em height 0.69678ex depth -0.60387ex
                \kern -0.6em \intop}\nolimits_{#1}^{#2}}%
        {\mathop{\kern 0.1em\vrule width 0.5em height 0.69678ex depth -0.60387ex
                \kern -0.6em \intop}\nolimits_{#1}^{#2}}%
        {\mathop{\kern 0.1em\vrule width 0.5em height 0.69678ex depth -0.60387ex
                \kern -0.6em \intop}\nolimits_{#1}^{#2}}}
\newcommand{\dist}{\operatorname{dist}}
\title[Maximal function estimates]{Maximal function estimates and   self-improvement   results for Poincar\'e inequalities} 
\thanks{The research is supported by the Academy of Finland.}
\author[J.\! Kinnunen]{Juha Kinnunen}   %  can use \and
\address[J.K.]{Department of Mathematics, Aalto University, P.O. Box 11100, FI-00076 Aalto University, Finland}
\email{juha.k.kinnunen@aalto.fi}
\author[J.\! Lehrb\"ack]{Juha Lehrb\"ack}   %  can use \and
\address[J.L.]{University of Jyvaskyla, Department of Mathematics and Statistics, P.O. Box 35, FI-40014 University of Jyvaskyla, Finland}
\email{juha.lehrback@jyu.fi}
\author[A.V.\! V\"ah\"akangas]{Antti V. V\"ah\"akangas}
\address[A.V.V.]{University of Jyvaskyla, Department of Mathematics and Statistics, P.O. Box 35, FI-40014 University of Jyvaskyla, Finland} 
\email{antti.vahakangas@iki.fi}
\author[X. Zhong]{Xiao Zhong}
\address[X.Z.]{Department of Mathematics and Statistics, Gustaf H\"allstr\"omin katu 2b, FI-00014 University of Helsinki, Finland}
\email{xiao.x.zhong@helsinki.fi}
\keywords{Analysis on metric spaces, Sobolev spaces, Poincar\'e inequality, geodesic space}
\subjclass[2010]{42B25, 35A23, 46E35, 31E05, 30L99}
\begin{document}

\begin{abstract}
Our main result is an estimate for   a   sharp maximal function, which implies a 
Keith--Zhong type   self-improvement property of  Poincar\'e inequalities   related to differentiable structures on metric measure spaces.
As an application, we give structure independent representation for Sobolev norms and universality results for Sobolev spaces.
\end{abstract}

\maketitle

\section{Introduction}

Relatively standard assumptions in analysis on metric measure spaces are a doubling condition on the measure and a Poincar\'e type inequality for a certain class of functions. 
Roughly speaking a Poincar\'e inequality transfers infinitesimal information encoded in the derivative to larger scales. It also relates the notion of a derivative to the given measure and, together with the doubling condition, implies Sobolev inequalities.
We consider   the so-called   $\mathcal D$-structures introduced in \cite{MR1876253}, which give a very general notion of a derivative with natural differentiation properties in metric measure spaces.
This gives an axiomatic point of view to the theory of Sobolev spaces on metric measure spaces, which includes the standard maximal and upper gradient approaches studied, for example, in \cite{MR2039955, MR1809341}.
Standard references to analysis on metric measure spaces are \cite{MR2867756,MR1800917,MR3363168}.

Keith and Zhong proved in \cite{MR2415381} that   Poincar\'e inequalities   are self-improving under certain assumptions.
More precisely,   their   result improves a $(1,p)$-Poincar\'e inequality with $p>1$ to a $(1,p-\varepsilon)$-Poincar\'e inequality for some $\varepsilon>0$.
This open ended property is of fundamental importance not only because of its theoretical interest but also because of its applications, for example, to regularity theory in the calculus of variations,   we refer to \cite{MR2415381} and references therein.  
In this work we establish   a corresponding self-improvement   property for $\mathcal D$-structures, see Theorem \ref{t.kz_D} below.
Our goal is to give an abstract and transparent argument with a special emphasis on the role of the underlying space and relevant maximal function inequalities.
Indeed,   instead of a good lambda inequality \cite[Proposition 3.1.1]{MR2415381},   our main result Theorem \ref{t.main_local} gives a new estimate for the sharp maximal function associated with a given $\mathcal D$-structure. 
This result may be of independent interest and several questions related to weighted norm inequalities for future research arise.

A distinctive feature of our approach is that,   in addition to   the standard Lipschitz scale, we   also   consider H\"older continuous functions.
  Moreover,   the role of the underlying space is visible only by way of the $\mathcal D$-structure   and certain geodesic arguments.  
%In particular, the space does not need to be complete. 
On technical level our argument differs from that of \cite{MR2415381} in the sense that Whitney type extension theorems for Lipschitz functions are completely avoided and the stopping time argument is tailored for $\mathcal D$-stuctures. 
  We would also like to point out that there is only one single place in the proof 
of Theorem \ref{t.main_local}
where the assumed Poincar\'e inequality is needed.  
Another approach to the Keith--Zhong theorem has been recently given in \cite{EB2016}.

As an application of our main result we study universality results for Sobolev spaces related to $\mathcal D$-structures.
More precisely, Theorem \ref{t.structure} gives a   $\mathcal{D}$-structure   independent representation for the Sobolev norm.
We also show that any abstract Sobolev space, rising from a suitable $\mathcal{D}$-structure, is isomorphic to one particular Sobolev space. This extends and complements results in \cite{MR1809341,MR2508848}.

\section{Preliminaries}

\subsection{Tracking constants}\label{s.constants}

Our results are based on 
quantitative estimates and absorption arguments, where
it is often crucial to track the dependencies of constants quantitatively.
For this purpose, we will use the following notational convention:
$C({\ast,\dotsb,\ast})$ denotes a positive constant which quantitatively 
depends on the quantities indicated by the $\ast$'s but whose actual
value can change from one occurrence to another, even within a single line.

\subsection{Metric spaces}\label{s.metric}
Here, and throughout the paper, we assume that $X=(X,d,\mu)$ is a metric measure space equipped with a metric $d$ and a 
positive complete Borel
measure $\mu$ such that $0<\mu(B)<\infty$
for all balls $B\subset X$, each of which is always an open set of the form \[B=B(x,r)=\{y\in X\,:\, d(y,x)<r\}\] with $x\in X$ and $r>0$. 
As in \cite[p.~2]{MR2867756},
we extend $\mu$ as a Borel regular (outer) measure on $X$.
We remark that the space $X$ is separable
under these assumptions, see \cite[Proposition 1.6]{MR2867756}.
We  also assume that $\# X\ge 2$ and  
that the measure $\mu$ is {\em doubling}, that is,
there is a constant $c_\mu> 1$, called
the {\em doubling constant of $\mu$}, such that
\begin{equation}\label{e.doubling}
\mu(2B) \le c_\mu\, \mu(B)
\end{equation}
for all balls $B=B(x,r)$ in $X$. 
Here we use for $0<t<\infty$ the notation $tB=B(x,tr)$. 
In particular, for all balls $B=B(x,r)$ that are centered at $x\in A\subset X$ with radius
$r\le \mathrm{diam}(A)$, we have that
\begin{equation}\label{e.radius_measure}
\frac{\mu(B)}{\mu(A)}\ge 2^{-s}\bigg(\frac{r}{\mathrm{diam}(A)}\bigg)^s\,,
\end{equation}
where $s=\log_2 c_\mu>0$. We refer to \cite[p.~31]{MR1800917}.

%In  the sequel, we will call $X$ satisfying the aforemention condition as a {\em metric space}.

\subsection{Geodesic spaces}
Let $X$ be a metric space satisfying the conditions stated in \S\ref{s.metric}.
By a {\em curve} we mean a nonconstant, rectifiable, continuous
mapping from a compact interval of $\R$ to $X$;  we tacitly assume
that all curves are parametrized by their arc-length.
We say that $X$ is a {\em geodesic space}, if 
every pair of points in $X$
can be joined by a curve whose length is equal to the distance between the two points. 
In particular, it easily follows that
\begin{equation}\label{e.diams}
0<\diam(2B)\le 4\diam(B)
\end{equation}
for all balls $B=B(x,r)$ in a geodesic space $X$.

The following lemma is \cite[Lemma 12.1.2]{MR3363168}.
%see also \cite[Proposition 2.2.2]{MR2415381}.

\begin{lemma}\label{l.continuous}
Suppose that $X$ is a geodesic space and $A\subset X$ is a measurable set. 
Then the function
\[
r\mapsto \frac{\mu(B(x,r)\cap A)}{\mu(B(x,r))}\,:\, (0,\infty)\to \R
\]
is continuous whenever $x\in X$.
\end{lemma}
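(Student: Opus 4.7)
The plan is to reduce continuity of the quotient to continuity of its numerator $N(r) := \mu(B(x, r) \cap A)$ and denominator $D(r) := \mu(B(x, r))$ separately; since $D$ is positive and finite on $(0, \infty)$, continuity of $N/D$ then follows. Both functions are non-decreasing, and left-continuity at every $r > 0$ is automatic: since balls are open we have $B(x, r) = \bigcup_{s < r} B(x, s)$, and continuity of $\mu$ from below gives $N(s) \uparrow N(r)$ and $D(s) \uparrow D(r)$ as $s \uparrow r$.

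For right-continuity, the identity $\bigcap_{s > r} B(x, s) = \{y \in X : d(y, x) \le r\}$ together with continuity of $\mu$ from above yields
\[
\lim_{s \downarrow r} N(s) - N(r) = \mu(S(x, r) \cap A), \qquad \lim_{s \downarrow r} D(s) - D(r) = \mu(S(x, r)),
\]
where $S(x, r) := \{y \in X : d(y, x) = r\}$ denotes the metric sphere. Since $\mu(S(x, r) \cap A) \le \mu(S(x, r))$, the entire lemma reduces to showing that $\mu(S(x, r)) = 0$ for every $x \in X$ and every $r > 0$.

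This sphere-vanishing step is where the geodesic assumption enters essentially; in a general doubling metric measure space spheres may carry atoms, and then the conclusion of the lemma can fail. My plan is to establish an annular-decay estimate of the form
\[
\mu\bigl(B(x, r + \eps) \setminus B(x, r - \eps)\bigr) \le C\,(\eps/r)^{\alpha}\, \mu(B(x, 2r))
\]
for some $\alpha, C > 0$ depending only on the doubling constant $c_\mu$; letting $\eps \to 0$ then forces $\mu(S(x, r)) = 0$. The key geometric input is that every point $y$ in the outer shell $\{r \le d(\cdot, x) < r + \eps\}$ lies within distance $\eps$ of the point at parameter $r$ on a minimizing geodesic from $x$ to $y$, so the outer shell is swept out by the $\eps$-neighbourhood of $S(x, r)$; combined with a doubling covering/packing argument on $S(x, r)$, this gives control of the outer annulus. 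A symmetric construction, which uses that $S(x, r) \neq \emptyset$ to allow a geodesic joining $x$ to a sphere-point to be used to approximate interior annular points, controls the inner shell.

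The main obstacle is precisely this annular-decay bound. Doubling by itself produces only the trivial estimate $\mu(\text{annulus}) \lesssim \mu(B(x, 2r))$, and the geodesic structure is exactly what is needed to extract the ``codimension-one'' savings on $S(x, r)$ that turn this into a genuine decay in $\eps$. The remaining reductions to sphere measure zero and from there to continuity of the quotient are routine measure-theoretic manipulations.
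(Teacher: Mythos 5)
The paper offers no proof of this lemma at all: it is quoted verbatim from \cite[Lemma 12.1.2]{MR3363168}, so the only comparison available is with the standard argument behind that citation. Your reduction is that standard (and essentially forced) route: continuity of the quotient follows from continuity of $r\mapsto\mu(B(x,r)\cap A)$ and $r\mapsto\mu(B(x,r))$ separately; left-continuity is free from continuity of measure from below; and right-continuity is precisely the statement that $\mu(S(x,r))=0$ for every metric sphere (taking $A$ to be a positive-measure subset of a sphere shows the lemma would fail otherwise). All of this is correct.

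The gap sits in the one step that carries all the content, and you flag it yourself as ``the main obstacle'': the annular-decay estimate is announced as a plan, not proved, and the plan as described would not deliver the claimed bound $C(\eps/r)^{\alpha}\mu(B(x,2r))$. A single covering of $S(x,r)$ by $\eps$-balls, each pushed inward along a geodesic from $x$, only compares the width-$\eps$ outer shell with a width-$O(\eps)$ inner annulus; it yields $\mu(\mathrm{shell}_\eps)\le C\,\mu(\mathrm{annulus}_\eps)$ with $C$ independent of $\eps$, which is no decay at all. Upgrading this to a genuine power of $\eps$ is Buckley's annular decay theorem for doubling length spaces, and it needs an iteration across dyadic scales that you have not supplied. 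The good news is that the full decay is overkill here: to get $\mu(S(x,r))=0$ it suffices to combine your covering-and-pushing step, which gives $\mu(S(x,r))\le C\,\mu(A_k)$ for inner annuli $A_k$ of width comparable to $2^{-k}r$ (with a bounded-overlap check for the pushed-in balls, coming from doubling and the disjointness of the original cover), with the observation that the $A_k$ can be chosen pairwise disjoint inside $B(x,r)$, so that $\sum_k\mu(A_k)\le\mu(B(x,r))<\infty$ forces $\mu(A_k)\to 0$. With that substitution the argument closes; note also that no separate control of the ``inner shell'' is needed, since the only set whose measure must vanish is the sphere itself.
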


%When $A\subset X$, we let $\ol A$ denote the closure of $A$,
%and hence $\ol B$ always refers to the closure of the ball $B$, 
%not to the corresponding closed ball.

\begin{lemma}\label{l.ball_measures}
Suppose that $B=B(x,r)$ and $B'=B(x',r')$ are two balls in a geodesic space $X$ such
that $x'\in B$ and $0<r'\le \mathrm{diam}(B)$. Then
$\mu(B')\le c_\mu^3 \mu(B'\cap B)$.
\end{lemma}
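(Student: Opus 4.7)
The plan is to exhibit a ball $B(y,r'/4)$ sitting inside $B'\cap B$ whose measure is comparable to that of $B'$; then three applications of doubling finish the job. The geodesic hypothesis is what allows us to pick the center $y$ deep enough inside $B$ while still staying very close to $x'$.

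First I would record the two numerical facts I will actually use. Since $x'\in B$ we have $d(x,x')<r$, and by \eqref{e.diams} (or just the triangle inequality applied to a diameter) the hypothesis $r'\le \mathrm{diam}(B)$ gives $r'\le 2r$, hence $r'/4\le r/2$. These bounds are what make the radii line up.

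Next, using the geodesic property, I would select $y$ as follows. Let $\gamma$ be a length-minimizing curve joining $x'$ to $x$, parametrized by arc length. Set
\[
 t=\min\{r'/4,\,d(x,x')\}\quad\text{and}\quad y=\gamma(t).
\]
Then $d(y,x')=t\le r'/4$, and $d(y,x)=d(x,x')-t$ if $t=r'/4$, while $d(y,x)=0$ if $t=d(x,x')<r'/4$. In either case $d(y,x)\le\max\{0,d(x,x')-r'/4\}< r-r'/4$, the last inequality because $d(x,x')<r$ and $r'/4\le r$. Consequently, for every $z\in B(y,r'/4)$ the triangle inequality gives $d(z,x')<r'/4+r'/4=r'/2<r'$ and $d(z,x)<r'/4+(r-r'/4)=r$, so
\[
 B(y,r'/4)\subset B'\cap B.
\]

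Finally I would close with the doubling condition. Since $d(y,x')\le r'/4$, we have $B'=B(x',r')\subset B(y,5r'/4)\subset B(y,2r')$, and $2r'=2^3\cdot(r'/4)$, so three applications of \eqref{e.doubling} yield
\[
 \mu(B')\le\mu(B(y,2r'))\le c_\mu^3\,\mu(B(y,r'/4))\le c_\mu^3\,\mu(B'\cap B).
\]
The main (really the only) obstacle is the case in which $x'$ is very close to $\partial B$ while $r'$ is comparable to $\mathrm{diam}(B)$; here the naive choice $y=x'$ fails because $B(x',r'/4)$ may stick out of $B$, and one genuinely needs the geodesic from $x'$ towards $x$ to retract the center to a point deep enough inside $B$. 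Everything else is bookkeeping with the constants $r'/4$, $r'/2$, $5r'/4$, $2r'$.
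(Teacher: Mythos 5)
Your proposal is correct and follows essentially the same route as the paper: produce a ball $B(y,r'/4)\subset B'\cap B$ by sliding the center from $x'$ towards $x$ along a geodesic, then apply the doubling condition three times via $B'\subset B(y,2r')$. The only cosmetic difference is that you unify, through the choice $t=\min\{r'/4,d(x,x')\}$, the two cases that the paper treats separately ($x\in B(x',r'/4)$ versus $x\notin B(x',r'/4)$), and your numerical estimates check out.
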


\begin{proof}
It suffices to find $y\in X$ such that
$B(y,r'/4)\subset B'\cap B$.
Inequality $\mu(B')\le c_\mu^3 \mu(B'\cap B)$ then follows
from the doubling condition \eqref{e.doubling} and the fact that $B'\subset B(y,2r')$.

Assume first that $x\in B(x',r'/4)$. In this case we may choose $y=x'$, 
since we have for all $z\in B(x',r'/4)$ that
\[
d(z,x)\le d(z,x')+d(x',x)< r'/4+r'/4=r'/2\le \diam(B)/2\le r\,,
\]
and hence $B(x',r'/4)\subset B'\cap B(x,r)=B'\cap B$.

Let us then consider the case $x\not\in B(x',r'/4)$. Since $X$ is  a geodesic space, there exists an arc-length parametrized curve $\gamma\colon [0,\ell]\to X$ with
$\gamma(0)=x'$, $\gamma(\ell)=x$ and $\ell=d(x,x')$.
We claim that $y=\gamma(r'/4)$ satisfies the required condition $B(y,r'/4)\subset B'\cap B$.
In order to prove the claim, let us fix a point $z\in B(y,r'/4)$. Then
\begin{align*}
d(z,x')\le d(z,y)+d(y,x') <  r'/4 + d(\gamma(r'/4),\gamma(0))\le r'/2<r'\,. 
\end{align*}
Hence $z\in B(x',r')$ and therefore $B(y,r'/4)\subset B(x',r')=B'$. Moreover, since
$\ell=d(x,x')$, 
\begin{align*}
d(z,x)&\le d(z,y)+d(y,x)
<r'/4 + d(\gamma(r'/4),\gamma(\ell))\\
&\le r'/4+ (\ell - r'/4)=\ell=d(x,x')<r\,.
\end{align*}
It follows that $z\in B(x,r)=B$ and therefore $B(y,r'/4)\subset B'\cap B$.
\end{proof}

\subsection{H\"older and Lipschitz functions}
Let $A\subset X$.
We say that
$u\colon A\to \R$ is  a {\em $\beta$-H\"older function,} with 
an exponent $0<\beta\le 1$ and a constant 
$0\le \kappa <\infty$, if
\[
\lvert u(x)-u(y)\rvert\le \kappa\, d(x,y)^\beta\qquad \text{ for all } x,y\in A\,.
\]
If $u\colon A\to \R$ is a $\beta$-H\"older function, with a constant $\kappa$, then the classical McShane extension
\begin{equation}\label{McShane}
v(x)=\inf \{ u(y) + \kappa \,d(x,y)^\beta\,:\,y\in A\}\,,\qquad x\in X\,,
\end{equation}
defines a $\beta$-H\"older function $v\colon X\to \R$,
with the constant $\kappa$, 
which satisfies
$v|_A = u$; we refer to \cite[pp.~43--44]{MR1800917}.
The set of all $\beta$-H\"older functions $u\colon A\to\R$
is denoted by $\Lip_\beta(A)$. 
The $1$-H\"older functions are also called {\em Lipschitz functions}.
We denote $\Lip(A)=\Lip_1(A)$.

\section{Definition and basic properties of $\mathcal{D}$-structures}\label{s.abstract}

We adapt the terminology from \cite{MR1876253} 
concerning the so-called $\mathcal{D}$-structures.
This structural framework captures the properties
that we will need for Keith--Zhong type self-improvement
of Poincar\'e inequalities, treated in \S\ref{s.main}--\S\ref{s.axiomatic}.
In the following definition, and throughout the paper, we use the following familiar notation: \[
u_A=\vint_{A} u(y)\,d\mu(y)=\frac{1}{\mu(A)}\int_A u(y)\,d\mu(y)
\]
is the integral average of $u\in L^1(A)$ over a measurable set $A\subset X$
with $0<\mu(A)<\infty$. Moreover if $E\subset X$, then $\mathbf{1}_{E}$
denotes the characteristic function of $E$; that is, $\mathbf{1}_{E}(x)=1$ if $x\in E$
and $\mathbf{1}_{E}(x)=0$ if $x\in X\setminus E$.

\begin{definition}\label{d.D_structure}
Let $X$ be a metric measure space (recall \S\ref{s.metric}). 
Fix $1\le p<\infty$ and $0<\beta\le 1$.
Suppose that
for each $u\in\mathrm{Lip}_\beta(X)$, we are
given a family
$\mathcal{D}(u)\not=\emptyset$ of measurable functions  $X\to [0,\infty]$
as follows.
 First, we assume %that condition (D1) is valid:
the following Poincar\'e inequality condition: % for $u\in\mathrm{Lip}_\beta(X)$ and $g\in \mathcal{D}(u)$: \color{black}
\begin{itemize}
\item[(D1)]
There are constants $K>0$ and $\tau\ge 1$ such that the $(1,p)$-Poincar\'e inequality
\begin{equation}\label{e.poincare}
\vint_{B} \lvert u(x)-u_{B}\rvert\,d\mu(x)
\le K^{1/p}\mathrm{diam}(B)^{\beta}\bigg(\vint_{\tau B} g(x)^p \,d\mu(x)\bigg)^{1/p}
\end{equation}
holds whenever $B$ is a ball in $X$ and whenever 
$u\in\mathrm{Lip}_\beta(X)$ and $g\in \mathcal{D}(u)$.
\end{itemize}
Second, for all $\beta$-H\"older functions $u,v\colon X\to \R$, we assume the following
conditions (D2)--(D4):
\begin{itemize}
\item[(D2)] $\lvert a\rvert g\in \mathcal{D}(au)$ if $a\in\R$ and $g\in \mathcal{D}(u)$;
\item[(D3)] $g_u + g_v\in \mathcal{D}(u+v)$ if $g_u\in\mathcal{D}(u)$ and $g_v\in\mathcal{D}(v)$;
\item[(D4)] If $v\colon X\to \R$ is $\beta$-H\"older with a constant $\kappa\ge 0$ and 
$v|_{X\setminus E}=u|_{X\setminus E}$
for a Borel set $E\subset X$, then
$\kappa \mathbf{1}_{E} + g_u \mathbf{1}_{X\setminus E}\in\mathcal{D}(v)$
whenever $g_u\in \mathcal{D}(u)$.
\end{itemize}

Then we say that the family $\{\mathcal{D}(u)\,:\,u\in\mathrm{Lip}_\beta(X)\}$
is a {\em $\mathcal{D}$-structure in $X$,} with exponents 
$1\le p<\infty$ and $0<\beta \le 1$, and with constants $K>0$
and $\tau \ge 1$.
\end{definition}

%In a geodesic space a $(1,p)$-Poincar\'e inequality \eqref{e.poincare} implies
%a  $(q,p)$-Poincar\'e inequality, for some $q>p$
%and with a better dilatation constant. We record
%this auxiliary fact in the following theorem.

Later in \S\ref{s.main} we will need
a stronger form of the condition (D1).
This stronger form (D1'), corresponding to a
$(p,p)$-Poincar\'e inequality, is explicitly stated in the following theorem.

\begin{theorem}\label{t.pp_poincare}
Suppose that 
$\{\mathcal{D}(u)\,:\,u\in\mathrm{Lip}_\beta(X)\}$
is a $\mathcal{D}$-structure in a geodesic space $X$, with exponents 
$1\le p<\infty$ and $0<\beta \le 1$, and with constants $K>0$
and $\tau \ge 1$. Then 
the following condition is valid:
\begin{itemize}
\item[\rm{(D1')}]
There exists $K_{p,p}=C(c_\mu,\beta,p,q,\tau)K>0$ such that the $(p,p)$-Poincar\'e inequality
\[
\bigg(\vint_{B} \lvert u(x)-u_{B}\rvert^p\,d\mu(x)\bigg)^{1/p}
\le K_{p,p}^{1/p}\mathrm{diam}(B)^{\beta}\bigg(\vint_{B} g(x)^p \,d\mu(x)\bigg)^{1/p}
\]
holds whenever $B$ is a ball in $X$ and whenever 
$u\in\mathrm{Lip}_\beta(X)$ and $g\in \mathcal{D}(u)$.
\end{itemize}
\end{theorem}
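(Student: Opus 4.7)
The plan is to derive (D1') from (D1) in two stages: first reduce the dilation parameter $\tau$ in (D1) to $1$ by exploiting the geodesic structure, and then upgrade the $L^1$-integrability to $L^p$ by a truncation argument tailored to the $\mathcal{D}$-structure axioms.

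For the dilation reduction, I would adapt the standard Haj\l asz--Koskela chaining argument available in doubling geodesic spaces. Given a ball $B = B(x_0, r)$, I would cover $B$ by sub-balls $B_i = B(y_i, r_i)$ with $y_i \in B$ and $r_i$ chosen so that $\tau B_i$ remains inside a bounded enlargement of $B$; Lemma \ref{l.ball_measures} supplies the measure comparability $\mu(B_i) \lesssim \mu(B_i \cap B)$ needed to make averages over $\tau B_i$ interchangeable up to constants with averages over $B$. Applying (D1) to each $B_i$ and telescoping along geodesic chains produces a $(1,p)$-Poincar\'e inequality on $B$ with dilation $1$ and constant $C(c_\mu, \tau) K$.

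For the integrability upgrade, given (D1) with $\tau = 1$, I would apply a Cavalieri-based truncation argument. Fix $u \in \Lip_\beta(X)$, $g \in \mathcal{D}(u)$, and a ball $B$. Using (D3) and the observation that the zero function belongs to $\mathcal{D}(c)$ for every constant $c$ (obtained by applying (D4) with $u \equiv 0$, $g_u = 0$, and $E = X$), I would normalize $u_B = 0$, and by splitting $u = u_+ - u_-$ further reduce to $u \ge 0$. For each $k \in \Z$, consider the dyadic truncation $v_k = \min\{\max\{u - 2^k, 0\}, 2^k\}$, which equals $u - 2^k$ on $\{2^k \le u \le 2^{k+1}\}$ and is constant on the complementary set $E_k$. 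Axiom (D4) supplies $\kappa_k \mathbf{1}_{E_k} + g \mathbf{1}_{X \setminus E_k} \in \mathcal{D}(v_k)$ with $\kappa_k$ a valid $\beta$-H\"older constant of $v_k$. Applying the $\tau = 1$ Poincar\'e inequality to each $v_k$ on $B$ and summing over $k$ via the layer-cake identity
\[
\int_B u^p = p \int_0^\infty \lambda^{p-1}\, \mu(\{u > \lambda\} \cap B)\,d\lambda
\]
combined with Chebyshev's inequality applied to the $v_k$'s yields (D1'). The main obstacle is the $\kappa_k \mathbf{1}_{E_k}$ contribution that axiom (D4) appends to the gradient of each $v_k$; in the classical Haj\l asz-gradient framework this term is absent and the truncation step is immediate. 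Absorbing it requires extracting the sharpest admissible H\"older constant $\kappa_k$, pairing it with the dyadic measure $\mu(E_k \cap B)$, and using the dyadic weight $2^{k(p-1)}$ from the layer-cake summation to force all error terms back into $\int_B g^p$; the first-stage dilation reduction is indispensable here, since any residual $\tau > 1$ would broaden the right-hand side and prevent the absorption from closing.
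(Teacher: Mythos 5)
Your first stage (removing the dilation $\tau$ by chaining in a geodesic doubling space) is sound and is essentially contained in the paper's argument, but your second stage has a genuine gap, and it is also not the route the paper takes. The paper deduces Theorem \ref{t.pp_poincare} as the special case $q=p$ of Theorem \ref{t.qp_poincare}: the chaining of Lemma \ref{l.chain} yields a pointwise estimate $\lvert u(x)-u_{B_0}\rvert\le CK^{1/p}\sum_i r_i^\beta\big(\vint_{\tau B_i}g^p\,d\mu\big)^{1/p}$, a $5r$-covering argument converts this into a weak-type $(t,p)$ estimate for some $t>p$, and Kolmogorov's inequality (Lemma \ref{l.kolmogorov}) then gives the strong $(q,p)$ inequality with dilation $1$ for every $q<Qp/(Q-\beta p)$; since $p$ lies strictly below this exponent, $q=p$ is admissible. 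Crucially, this uses only the single pair $(u,g)$ satisfying (D1) and none of the axioms (D2)--(D4) -- a point the paper explicitly emphasizes.

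Your truncation stage fails for two reasons. First, the truncation method requires the truncated pairs $(v_k,\,g\mathbf{1}_{\{2^k<u-c\le 2^{k+1}\}})$ to satisfy the Poincar\'e inequality, i.e.\ the gradient of $v_k$ must vanish where $v_k$ is locally constant. Axiom (D4) does not give this: it produces $\kappa_k\mathbf{1}_{E_k}+g\mathbf{1}_{X\setminus E_k}$, where $\kappa_k$ is a \emph{global} $\beta$-H\"older constant of $v_k$ on all of $X$ -- in general comparable to the H\"older constant of $u$ itself and entirely unrelated to $g$ in an abstract $\mathcal{D}$-structure -- while $\mu(E_k\cap B)$ is comparable to $\mu(B)$ for essentially every $k$. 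The resulting error $\sum_k(\text{weight})\,\kappa_k^p\,\mu(E_k\cap B)$ cannot be absorbed into $\int_B g^p$; there is no ``sharpest admissible $\kappa_k$'' that rescues this. (Contrast Lemma \ref{l.mainl_local}, where the exceptional sets are superlevel sets of the sharp maximal function, so that Lemma \ref{l.arm_local} really does control the H\"older constant by the level $2^i\lambda$.) Second, even granting ideal truncated gradients with disjoint supports $A_k$, summing the $(1,p)$ inequality applied to each $v_k$ against the layer-cake weights $2^{k(p-1)}$ has the wrong homogeneity: one is left with $\sum_k 2^{k(p-1)}\big(\int_B g^p\mathbf{1}_{A_k}\,d\mu\big)^{1/p}$, which is not dominated by $\big(\int_B g^p\,d\mu\big)^{1/p}$ times any finite quantity. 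The truncation method upgrades a weak-type $(q,p)$ inequality to a strong one at the same exponent $q\ge p$; it does not upgrade $L^1$ to $L^p$. To obtain the required weak-type $(p,p)$ input you must run the chaining-and-covering argument anyway, and once you have it, Kolmogorov's lemma finishes the proof with no truncation and no use of (D2)--(D4).
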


Theorem \ref{t.pp_poincare} is an immediate consequence of
a stronger result, namely the $\mathcal{D}$-structure independent Theorem \ref{t.qp_poincare}.
Moreover, the latter result gives $(q,p)$-Poincar\'e inequalities
for some $q>p$.
By formulating Theorem \ref{t.qp_poincare} separately, we wish to emphasize 
the contrast that $\mathcal{D}$-structures are not needed in this
`simpler' aspect of self-improvement.
%Out strategy is to apply the computations from \cite[pp.~30--33]{MR1800917}.

\smallskip
We need a chaining lemma from \cite[p.~30--31]{MR1800917}.

\begin{lemma}\label{l.chain}
Suppose that $X$ is a geodesic space and that $\tau\ge 1$. 
Then
there are constants $M=C(\tau)\ge 1$ and $a=C(\tau)>1$
as follows.

Every ball $B\subset X$ contains a ball $B_0\subset B$ such that, for each $x\in B$, 
there is a sequence of balls $\{B_i\,:\,i=1,2,\ldots\}$ in $X$
satisfying the following conditions:
\begin{enumerate}[label=\rm{(\alph*)}]
\item\label{c1} $\tau B_i\subset B$ for all $i\ge 0$; 
\item\label{c2} $B_i$ is centered at $x$ for all sufficiently large $i$;
\item\label{c3} the radius $r_i$ of $B_i$ satisfies
$M^{-1} a^{-i} \diam(B) \le r_i \le M a^{-i} \diam(B)$
for all $i\ge 0$; and
\item\label{c4} the intersection $B_i\cap B_{i+1}$ contains
a ball $R_i$ such that $B_i\cup B_{i+1}\subset MR_i$ for all $i\ge 0$.
\end{enumerate}
\end{lemma}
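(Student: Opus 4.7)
The natural approach is to build the chain explicitly along a geodesic joining the center $x_0$ of $B=B(x_0,r)$ to $x$, placing balls whose centers advance along the geodesic while the radii shrink geometrically. A single safety factor $\lambda=\lambda(\tau)>\tau$ will govern both the containment $\tau B_i\subset B$ in (a) and the overlap of consecutive balls in (d), and the contraction ratio $a>1$ has to be chosen close enough to $1$ (depending on $\tau$) to keep consecutive balls overlapping.

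Fix $a=a(\tau)\in(1,(2\tau+1)/(2\tau-1))$, set $\lambda=2\tau$, and let $M=M(\tau)$ be large. Take the central ball $B_0:=B(x_0,\diam(B)/M)$, which is independent of $x$ and satisfies $\tau B_0\subset B$ as soon as $M\ge 2\tau$. If $x=x_0$, take $B_i=B(x_0,a^{-i}\diam(B)/M)$; all four conditions are trivial. Otherwise set $\ell:=d(x_0,x)\in(0,r)$ and let $\gamma\colon[0,\ell]\to X$ be a unit-speed geodesic from $x_0$ to $x$. For $i\ge 1$ define
$$ r_i=a^{-i}\diam(B)/M,\qquad t_i=\min\bigl(\ell,\max(0,r-\lambda r_i)\bigr),\qquad B_i=B\bigl(\gamma(t_i),r_i\bigr). $$
Since $r_i\to 0$, eventually $r-\lambda r_i>\ell$, so $t_i=\ell$ and $\gamma(t_i)=x$, giving (b). Condition (c) is built into the definition of $r_i$. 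For (a), observe that $d(\gamma(t_i),x_0)=t_i\le r-\lambda r_i$ whenever the right side is non-negative, hence $d(\gamma(t_i),x_0)+\tau r_i\le r-(\lambda-\tau)r_i<r$; the edge cases $t_i=0$ and $t_i=\ell$ follow from the same bookkeeping (using $M\ge 2\tau$ in the former).

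The crux is (d). Because centers lie on the geodesic, $d(\gamma(t_i),\gamma(t_{i+1}))=|t_{i+1}-t_i|$. In the generic middle regime where both $t_i,t_{i+1}\in(0,\ell)$, this equals $\lambda(r_i-r_{i+1})=\lambda(1-1/a)r_i$, and the standing inequality $\lambda(a-1)<a+1$ (forced by the choice of $a$) translates to $d(\gamma(t_i),\gamma(t_{i+1}))<r_i+r_{i+1}$ with a definite quantitative slack. A direct geometric construction then places a ball $R_i$ centered between $\gamma(t_i)$ and $\gamma(t_{i+1})$ of radius a fixed fraction of $r_{i+1}$, so that $R_i\subset B_i\cap B_{i+1}$; enlarging by a constant depending only on $\tau$ yields $B_i\cup B_{i+1}\subset MR_i$ after adjusting $M$. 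The transitional indices (where $t_i$ jumps from $0$ to positive, or where $t_{i+1}$ first reaches $\ell$, or the pair $(B_0,B_1)$ for which $t_1=0$ when $M\in[2\tau,4\tau]$) produce the same overlap estimate by the same inequality, because at each such index the relevant radii are comparable and the center displacement is controlled by $\lambda r_i$.

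\textbf{Main obstacle.} The only delicate point is the triple balance between $\tau$, $\lambda$, and $a$: a larger $\tau$ forces $\lambda>\tau$ larger (for (a)), which in turn forces $a-1$ smaller (for (d)), so that the number of balls traversing the geodesic grows and $M$ must be enlarged to absorb the overlap and containment estimates. Once $a=a(\tau)$ and $M=M(\tau)$ are set quantitatively so that $\lambda(a-1)<a+1$ and $M\ge 4\tau$, all four conditions fall out from elementary geodesic estimates.
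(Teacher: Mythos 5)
The paper does not actually prove this lemma; it quotes it from \cite[pp.~30--31]{MR1800917}, where the chain is built exactly as you propose: balls marching along a geodesic from the center of $B$ to $x$ with geometrically shrinking radii, the overlap of consecutive balls coming from choosing $a-1$ small relative to $1/\tau$. So your approach is the standard one, and the generic estimates are right: for $i\ge 1$ the map $s\mapsto\min\bigl(\ell,\max(0,r-\lambda s)\bigr)$ is decreasing and $\lambda$-Lipschitz, so $d(\gamma(t_i),\gamma(t_{i+1}))\le\lambda(r_i-r_{i+1})$, and since $a$ is fixed strictly inside the interval where $\lambda(a-1)<a+1$, this gives $d(\gamma(t_i),\gamma(t_{i+1}))\le(1-\epsilon)(r_i+r_{i+1})$ for a fixed $\epsilon=\epsilon(\tau)>0$; a ball $R_i$ of radius comparable to $r_{i+1}$ inside $B_i\cap B_{i+1}$ then comes from the geodesic argument of Lemma~\ref{l.ball_measures}, and enlarging the final constant $M$ for condition \ref{c4} does not disturb \ref{c1}--\ref{c3}.

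The one step that does not close as written is the link $(B_0,B_1)$. Your claim that $M\in[2\tau,4\tau]$ forces $t_1=0$ is false in general: $t_1=0$ requires $r\le\lambda r_1=2\tau\diam(B)/(aM)$, i.e.\ $aM\le 2\tau\diam(B)/r$, which is incompatible with $M\ge 2\tau$ when $\diam(B)\le r$ (e.g.\ $\diam(B)=r$, which is the typical case). The link is nevertheless fine, but for a different reason, namely the same overlap inequality: for $B\ne X$ one has $\diam(B)\ge r$, so with $M=2\tau$ in the definition of the radii, $t_1\le\bigl(r-\lambda r_1\bigr)^+ = \bigl(r-\diam(B)/a\bigr)^+\le r(a-1)/a$, while $r_0+r_1=\diam(B)(a+1)/(2\tau a)\ge r(a+1)/(2\tau a)$, and $2\tau(a-1)<a+1$ again yields the quantitative overlap. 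Equivalently, choose $r_0$ with $\lambda r_0\ge r\ge \tau r_0$ (possible precisely because $\lambda=2\tau>\tau$), so that $t_0=0$ agrees with the formula $\min(\ell,\max(0,r-\lambda r_0))$ and the Lipschitz bound covers $i=0$ as well. Finally, the residual case $B=X$ (where $\diam(B)<r$ may persist even after normalizing $r\le 2\diam(X)$) should be treated separately, e.g.\ by anchoring the march at $x$ instead of at the sphere of radius $r$; condition \ref{c1} is vacuous there. These are small repairs, not a change of method.
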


%We also need a connection between the weak $L^q$-norm and
%a `Kolmogorov condition'.
%
%\begin{lemma}\label{l.kolmogorov}
%Let $0<q<t<\infty$ and for a measurable function $u\ge 0$ on $X$, we define
%\begin{align*}
%&\lVert u\rVert_{t,\infty}=\sup_{\lambda >0} \lambda\, \mu(\{x\in X\,:\, \lvert u(x)\rvert>\lambda \})^{1/t}\\
%&N_{t,q}(u)=\sup_E \frac{\lVert u\mathbf{1}_E\rVert_q}{\lVert \mathbf{1}_E\rVert_\kappa}\,,\qquad \frac{1}{\kappa}=\frac{1}{q}-\frac{1}{t}\,,
%\end{align*}
%where the supremum is taken over all measurable sets $E\subset X$ with $0<\mu(E)<\infty$. Then we
%have inequalities
%\[
%\lVert u\rVert_{t,\infty}\le N_{t,q}(u)\le \bigg(\frac{t}{t-q}\bigg)^{1/q} \lVert u\rVert_{t,\infty}\,.
%\]
%\end{lemma}

We also need the following lemma, which is essentially
\cite[Lemma 4.22]{MR1800917}. See also \cite[p.~485]{MR807149}.

\begin{lemma}\label{l.kolmogorov}
Let $B\subset X$ be a ball in a metric space and let $u\colon B\to \R$
be a measurable function. Fix $1\le q<t<\infty$
and $C_0>0$ such that
\[
\mu(\{x\in B\,:\, \lvert u(x)\rvert > \lambda\})\le C_0 \lambda^{-t}
\]
for each $\lambda>0$. Then
\[
\bigg(\vint_B \lvert u\rvert^q\,d\mu\bigg)^{1/q}\le 2^{1/q}\bigg(\frac{C_0 q}{t-q}\bigg)^{1/t} \mu(B)^{-1/t}\,.
\]
\end{lemma}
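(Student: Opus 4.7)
The plan is to prove this via the layer-cake (Cavalieri) representation of $\int_B |u|^q \, d\mu$, splitting the integration in $\lambda$ at a carefully chosen threshold so that the trivial volume bound is used on the lower piece and the hypothesized weak-type bound is used on the upper piece.

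First I would write, for any measurable function $u\colon B\to\R$,
\[
\int_B \lvert u\rvert^q\, d\mu = q\int_0^\infty \lambda^{q-1}\mu(\{x\in B\,:\,\lvert u(x)\rvert>\lambda\})\, d\lambda.
\]
Then for a free parameter $\lambda_0>0$ to be chosen, I split the right-hand side as $\int_0^{\lambda_0} + \int_{\lambda_0}^\infty$. On $(0,\lambda_0)$ I apply the crude estimate $\mu(\{\lvert u\rvert>\lambda\})\le \mu(B)$, producing the term $\mu(B)\lambda_0^q$. On $(\lambda_0,\infty)$ I apply the weak-type hypothesis $\mu(\{\lvert u\rvert>\lambda\})\le C_0\lambda^{-t}$ and, using $q<t$ so the integral converges, I obtain $\frac{qC_0}{t-q}\lambda_0^{q-t}$. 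This yields
\[
\int_B \lvert u\rvert^q\, d\mu \le \mu(B)\lambda_0^q + \frac{qC_0}{t-q}\lambda_0^{q-t}.
\]

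Next I would choose $\lambda_0$ so that the two terms are exactly equal, namely
\[
\lambda_0^t = \frac{C_0 q}{(t-q)\,\mu(B)}.
\]
A short computation then shows that both terms equal $\bigl(\tfrac{C_0q}{t-q}\bigr)^{q/t}\mu(B)^{1-q/t}$, and hence
\[
\int_B \lvert u\rvert^q\, d\mu \le 2\,\bigg(\frac{C_0 q}{t-q}\bigg)^{q/t}\mu(B)^{1-q/t}.
\]
Finally, dividing by $\mu(B)$ to pass to integral averages and taking the $q$-th root gives the stated bound with the factor $2^{1/q}$ in front and $\mu(B)^{-1/t}$ at the end.

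There is no serious obstacle here: the argument is a standard weak-to-strong interpolation, and the only matter of care is the optimization of $\lambda_0$, which is dictated by balancing the two integrals and produces the precise constant $2^{1/q}\bigl(\tfrac{C_0q}{t-q}\bigr)^{1/t}$. Measurability of $\lambda\mapsto \mu(\{\lvert u\rvert>\lambda\})$, needed to apply Cavalieri's principle, follows from monotonicity of that distribution function.
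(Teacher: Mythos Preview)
Your argument is correct and complete: the layer-cake representation, the split at a threshold $\lambda_0$, the balancing choice $\lambda_0^t = C_0 q/((t-q)\mu(B))$, and the final algebra all check out and yield exactly the stated constant $2^{1/q}\bigl(\tfrac{C_0 q}{t-q}\bigr)^{1/t}\mu(B)^{-1/t}$.

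As for comparison with the paper: the paper does not actually supply a proof of this lemma. It simply records the statement and points to \cite[Lemma~4.22]{MR1800917} (and \cite[p.~485]{MR807149}) for the argument. Your proof is the standard one for this Kolmogorov-type weak-to-strong estimate and is, in essence, the argument found in those references, so there is no divergence to discuss.
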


The following self-improvement result follows from a straightforward adaptation of the main result in \cite{MR1336257}
that corresponds to the case $\beta=1$.
We refer to \cite{MR3089750}
for versions of this result taking place in general metric spaces and with any $\beta>0$. For convenience, we recall the proof.

\begin{theorem}\label{t.qp_poincare}
Suppose that $X$ is a geodesic space.
Fix  exponents
%Suppose that we are given a $\mathcal{D}$-structure
%in a geodesic space $X$, with exponents 
$1\le p<\infty$ and $0<\beta \le 1$.
Suppose that 
$u\in\mathrm{Lip}_\beta(X)$ and that $g\colon X\to [0,\infty]$ is
a measurable function.
Assume further that there are constants $K>0$ and $\tau\ge 1$ such that inequality
\[
\vint_{B} \lvert u(x)-u_{B}\rvert\,d\mu(x)
\le K^{1/p}\mathrm{diam}(B)^{\beta}\bigg(\vint_{\tau B} g(x)^p \,d\mu(x)\bigg)^{1/p}
\]
holds whenever $B$ is a ball in $X$.
Suppose that $Q\ge \log_2 c_\mu>0$
satisfies inequality $\beta p<Q$, where $c_\mu$ is the doubling constant
of $\mu$. Fix  $1\le q<Qp/(Q-\beta p)$.
Then there is a constant $C=C(c_\mu,Q,\beta,p,q,\tau)>0$
such that  inequality
\[
\bigg(\vint_B \lvert u(x)-u_B\rvert^q\,d\mu(x)\bigg)^{1/q}\le CK^{1/p} \diam(B)^\beta\bigg(\vint_B g(x)^p\,d\mu(x)\bigg)^{1/p}
\]
holds whenever $B\subset X$ is a ball.
\end{theorem}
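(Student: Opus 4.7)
The plan is to follow the classical strategy: chain and telescope $|u(x)-u_{B_0}|$ along the sequence of balls given by Lemma~\ref{l.chain}, apply the assumed $(1,p)$-Poincar\'e inequality term-by-term, bound the resulting series pointwise by a power of the Hardy--Littlewood maximal function via Hedberg's truncation, convert the resulting weak-type bound to an $L^q$ bound via Kolmogorov's Lemma~\ref{l.kolmogorov}, and finally replace $u_{B_0}$ by $u_B$.

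Fix a ball $B$ and let $B_0\subset B$ be the ball furnished by Lemma~\ref{l.chain}. For $x\in B$, let $\{B_i\}$ be the associated chain with overlap balls $R_i\subset B_i\cap B_{i+1}$ satisfying $B_i\cup B_{i+1}\subset MR_i$. Since $u$ is continuous and the $B_i$ are eventually centered at $x$ with radii shrinking to zero, $u_{B_i}\to u(x)$. Telescoping, applying the assumed $(1,p)$-Poincar\'e inequality on each $B_i$, and using the doubling inequality \eqref{e.doubling} to compare averages on $R_i$, $B_i$, and $B_{i+1}$, one obtains
\[
|u(x)-u_{B_0}|\le CK^{1/p}\sum_{i=0}^\infty r_i^\beta\left(\vint_{\tau B_i}g^p\,d\mu\right)^{1/p}.
\]
Property (c) of Lemma~\ref{l.chain} forces $r_i$ to decay geometrically, so the center $x_i$ of $B_i$ satisfies $d(x_i,x)\le C\sum_{j\ge i}r_j\le Cr_i$. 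Hence $\tau B_i\subset B(x,Cr_i)$ and, by doubling, $\vint_{\tau B_i}g^p\le C\vint_{B(x,Cr_i)}h\,d\mu$ with $h:=g^p\mathbf{1}_B$.

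Now comes Hedberg's truncation. For $\lambda>0$, split the sum at the threshold $r_i=\lambda$. Terms with $r_i\le \lambda$ are dominated by $(Mh(x))^{1/p}$, where $M$ is the Hardy--Littlewood maximal operator, and $\sum_{r_i\le\lambda}r_i^\beta\le C\lambda^\beta$ by the geometric decay of the radii, giving a bound of $C\lambda^\beta(Mh(x))^{1/p}$. For $r_i>\lambda$, use $\vint_{B(x,Cr_i)}h\le \mu(B(x,Cr_i))^{-1}\|h\|_{L^1(B)}$ together with \eqref{e.radius_measure} (with $s$ replaced by $Q\ge s$), yielding $\mu(B(x,Cr_i))^{-1/p}\le C\mu(B)^{-1/p}(\diam(B)/r_i)^{Q/p}$. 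Since $\beta p<Q$, the exponent $\beta-Q/p$ is negative, so $\sum_{r_i>\lambda}r_i^{\beta-Q/p}\le C\lambda^{\beta-Q/p}$ is a geometric series dominated by its largest term, yielding the bound $C\lambda^{\beta-Q/p}\diam(B)^{Q/p}(\vint_B g^p)^{1/p}$. Balancing via $\lambda=\diam(B)((\vint_B g^p)/Mh(x))^{1/Q}$ produces the Hedberg-type pointwise inequality
\[
|u(x)-u_{B_0}|\le CK^{1/p}\diam(B)^\beta\bigl(Mh(x)\bigr)^{(Q-\beta p)/(Qp)}\left(\vint_B g^p\,d\mu\right)^{\beta/Q}.
\]

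Set $t:=Qp/(Q-\beta p)$, so $q<t$ by assumption. Combining the Hedberg inequality with the weak-$(1,1)$ bound $\mu(\{Mh>\sigma\})\le C\sigma^{-1}\mu(B)\vint_B g^p$ and using the identity $1+\beta t/Q=t/p$ yields the weak-type estimate
\[
\mu\bigl(\{x\in B:|u(x)-u_{B_0}|>\lambda\}\bigr)\le C\mu(B)\Bigl(K\diam(B)^{\beta p}\vint_B g^p\,d\mu\Bigr)^{t/p}\lambda^{-t}.
\]
Lemma~\ref{l.kolmogorov}, applied on $B$ with this $C_0$, converts the weak-type bound into $\bigl(\vint_B|u-u_{B_0}|^q\,d\mu\bigr)^{1/q}\le CK^{1/p}\diam(B)^\beta\bigl(\vint_B g^p\,d\mu\bigr)^{1/p}$, with the constant depending on $c_\mu,Q,\beta,p,q,\tau$ through $t-q$ and the parameters of Lemma~\ref{l.chain}. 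Finally $|u_B-u_{B_0}|\le\bigl(\vint_B|u-u_{B_0}|^q\bigr)^{1/q}$ by Jensen, so replacing $u_{B_0}$ with $u_B$ only costs a factor of two. The principal obstacle is the clean execution of Hedberg's truncation: the doubling constants must be tracked through the recentering of the chain balls at $x$, and the optimization must indeed produce exactly the Sobolev exponent $t=Qp/(Q-\beta p)$.
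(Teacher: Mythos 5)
Your proof is correct, but the second half takes a genuinely different route from the paper's. Both arguments begin identically: chain via Lemma~\ref{l.chain}, telescope, apply the assumed $(1,p)$-Poincar\'e inequality on each link, and end with Kolmogorov's Lemma~\ref{l.kolmogorov}. The divergence is in how the pointwise chain estimate is converted into a weak-type bound. The paper compares the series $\sum_i \lambda r^{-\beta\varepsilon}r_i^{\beta\varepsilon}$ against $\sum_i r_i^\beta(\vint_{\tau B_i}g^p)^{1/p}$ to select, for each $x\in U^\lambda$, a single good ball $B_{i_x}$, then applies the $5r$-covering lemma to these balls and sums using the subadditivity of $\mu\mapsto\mu^{1+\beta p(\varepsilon-1)/Q}$; this yields a weak-type estimate at an exponent $t=p/(1+\beta p(\varepsilon-1)/Q)$ strictly below the Sobolev exponent, which suffices since $q<Qp/(Q-\beta p)$ is strict. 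You instead run Hedberg's truncation: split the chain sum at a threshold radius, bound the small-radius tail by $(Mh(x))^{1/p}$ and the large-radius head by the total mass via \eqref{e.radius_measure}, and optimize, obtaining the pointwise inequality $\lvert u(x)-u_{B_0}\rvert\le CK^{1/p}\diam(B)^\beta (Mh(x))^{(Q-\beta p)/(Qp)}(\vint_B g^p)^{\beta/Q}$ and hence, via the weak $(1,1)$ maximal inequality, the weak-type bound at the borderline exponent $t=Qp/(Q-\beta p)$ itself (your identity $1+\beta t/Q=t/p$ checks out). What your route buys is a sharper intermediate result (the critical weak-type estimate and a Hedberg inequality of independent interest) and a cleaner passage to Kolmogorov; what it costs is the need to import the weak $(1,1)$ bound for the Hardy--Littlewood maximal operator on doubling spaces, which the paper never invokes (though the $5r$-covering lemma underlies both), plus some bookkeeping you rightly flag: the recentering $\tau B_i\subset B(x,Cr_i)$ with doubling-controlled constants, the admissibility of the balancing $\lambda\le C\diam(B)$, and the degenerate cases $\vint_B g^p\in\{0,\infty\}$, all of which are routine.
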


\begin{proof}
Fix $u$, $g$ and a ball $B=B(x_0,r)\subset X$ with $r>0$.
Without loss of generality, we may assume that $r\le 2\diam(B)$.  
%Fix  functions $u$ \in\mathrm{Lip}_\beta(X)$
%and $g\in \mathcal{D}(u)$.
Let $B_0\subset B$ be the fixed ball as in Lemma \ref{l.chain} for the given $B\subset X$ and $\tau \ge 1$.
By subtracting
a constant from $u$, if necessary, we can assume
that $u_{B_0}=0$.

Let $\lambda >0$ and let
$x\in U^\lambda =\{y\in B\,:\, \lvert u(y)\rvert>\lambda\}$. 
Fix $\{B_i=B(x_i,r_i)\,:\,i=1,2,\ldots\}$ and $\{R_i\,:\, i=0,1,\ldots\}$
that are associated with the point $x$
and the ball $B$ as in Lemma \ref{l.chain}. In particular, the
properties \ref{c1}--\ref{c4} of the chain are valid. 
By the properties \ref{c2} and \ref{c3}, we have $u_{B_i}\to u(x)$ as $i\to \infty$, 
 and so 
\begin{align*}
\lambda&< \lvert u(x)\rvert = \lvert u(x)-u_{B_0}\rvert
\le \sum_{i=0}^\infty \lvert u_{B_{i+1}}-u_{B_i}\rvert\\
&\le \sum_{i=0}^\infty \big(\lvert u_{B_{i+1}}-u_{R_i}\rvert + \lvert u_{R_i}-u_{B_i}\rvert\big)\\
&\le \sum_{i=0}^\infty \bigg(\frac{\mu(B_{i+1})}{\mu(R_i)} \vint_{B_{i+1}} \lvert u-u_{B_{i+1}}\rvert\,d\mu+\frac{\mu(B_{i})}{\mu(R_i)} \vint_{B_{i}} \lvert u-u_{B_{i}}\rvert\,d\mu\bigg)\\
&\le CK^{1/p}\sum_{i=0}^\infty  r_i^\beta \bigg(\vint_{\tau B_i} g^p\,d\mu\bigg)^{1/p}\,.
\end{align*}
Hence for any $0<\varepsilon<1$, that is to be chosen later, we obtain that
\begin{align*}
 \sum_{i=0}^\infty  \lambda r^{-\beta\varepsilon} r_i^{\beta \varepsilon}
 \le C\lambda r^{-\beta\varepsilon}\sum_{i=0}^\infty  (a^{-i}\diam(B))^{\beta \varepsilon}\le C\lambda \le CK^{1/p}\sum_{i=0}^\infty r_i^\beta \bigg(\vint_{\tau B_i} g^p\,d\mu\bigg)^{1/p}\,.
\end{align*}
By comparing the sums on the left and right, we obtain an index $i_x\in \{0,1,\ldots\}$ such that
\[
\lambda r^{-\beta\varepsilon}r_{i_x}^{\beta \varepsilon}\le CK^{1/p} r_{i_x}^\beta \bigg(\vint_{\tau B_{i_x}} g^p\,d\mu\bigg)^{1/p}\,.
\]
A straightforward chaining argument, relying on the  properties \ref{c2}--\ref{c4} of the chain, implies that
$x\in C(M,a)B_{i_x}=B'_{i_x}$ for a constant $C(M,a)\ge 1$. 
By the previous estimates and property \ref{c1} of the chain,
\begin{equation}\label{e.penu}
\lambda ^p r_{i_x}^{\beta p (\varepsilon-1)}\mu(B_{i_x})\le CKr^{\beta  p\varepsilon}\int_{\tau B_{i_x}} g^p\,d\mu\le CKr^{\beta p\varepsilon}\int_{\tau B_{i_x}'} \mathbf{1}_{B}g^p\,d\mu\,.
\end{equation}
The assumptions on $Q$, inequality \eqref{e.radius_measure},
and properties \ref{c1} and \ref{c3} together imply that
\[
\frac{\mu(B_{i_x})}{\mu(B)}\ge \frac{\mu(M^{-1}B_{i_x})}{\mu(B)}\ge C\Big(\frac{r_{i_x}}{\diam(B)}\Big)^Q\ge C\Big(\frac{r_{i_x}}{r}\Big)^Q\,.
\]
By first raising this to power $\beta p(\varepsilon-1)/Q<0$ and  then substituting 
the result
to 
\eqref{e.penu},
\begin{equation}\label{e.ds}
\begin{split}
\lambda ^p \mu(5\tau B_{i_x}')^{1+\beta p (\varepsilon-1)/Q}&\le C\lambda ^p \mu(B_{i_x})^{1+\beta p (\varepsilon-1)/Q}\\&\le CK r^{\beta p}\mu(B)^{\beta p(\varepsilon-1)/Q}\int_{\tau B_{i_x}'} \mathbf{1}_{B}g^p\,d\mu\,.
\end{split}
\end{equation}

Using the $5r$-covering lemma \cite[Lemma 1.7]{MR2867756}, we obtain  a countable and disjoint subfamily
\[\{\tau B_{x_k}'\}\subset \{\tau B_{i_x}'\,:\, x\in U^\lambda\}\] of balls indexed by $k$ such that the covering property $U^\lambda\subset \cup_k 5\tau B_{x_k}'$ holds true.
Let us also observe that $0<1+ \beta p(\varepsilon-1)/Q<1$.
Hence, by the above covering property and \eqref{e.ds},
\begin{equation}\label{e.final}
\begin{split}
\lambda^p\mu(U^\lambda)^{1+\beta p (\varepsilon-1)/Q} 
&\le \sum_k \lambda^p\mu(5\tau B_{x_k}')^{1+\beta p(\varepsilon-1)/Q} 
\\
&\le CK r^{\beta p}\mu(B)^{\beta p(\varepsilon-1)/Q} \sum_k \int_{\tau B_{x_k}'} \mathbf{1}_{B}g^p\,d\mu
\\
&\le CK r^{\beta p}\mu(B)^{\beta p(\varepsilon-1)/Q} \int_{B} g^p\,d\mu\,.
\end{split}
\end{equation}

Recall that $\beta p< Q$ and $1\le q < Qp/(Q-\beta p)$. These facts allows us to choose the number
$0<\varepsilon<1$, depending on $Q$, $p$ and $\beta$ only, such that
$\max\{q,p\}< t =p/(1+\beta p(\varepsilon-1)/Q)$.
Thus, by 
raising inequality \eqref{e.final} to the power $t/p$
and applying Lemma \ref{l.kolmogorov},
we obtain
\begin{align*}
\bigg(\vint_{B} \lvert u-u_{B}\rvert^qd\mu\bigg)^{t/q}
&\le 2^t\bigg(\vint_{B} \lvert u\rvert^qd\mu\bigg)^{t/q}\\
& \le \frac{C\sup_{\lambda >0} \lambda^t\mu(\{x\in B\,:\,  \lvert u(x)\rvert>\lambda\})}{\mu(B)} \\
& \le C K^{t/p}r^{\beta t}\bigg(\vint_{B} g^p\,d\mu\bigg)^{t/p}\,.
\end{align*}
Since $B\subset X$ is an arbitrary ball,
we conclude the proof by raising both sides to power $1/t$
and recalling that $r\le 2\diam(B)$.
\end{proof}

\section{Boundedness results for maximal operators}\label{s.main}

\subsection{The main result}

Here we  formulate and prove our main result,
Theorem \ref{t.main_local}. This theorem can be viewed as 
a boundedness
result for a certain maximal function which, in turn, is  naturally associated
with a given $\mathcal{D}$-structure. More specifically, let $1<p<\infty$ and $0<\beta\le 1$.
If $\mathcal{B}\not=\emptyset$ is a given family
of balls in $X$, then we define a fractional sharp maximal function 
\begin{equation}\label{d.m_def}
M^{\sharp,p}_{\beta,\mathcal{B}}u(x)=\sup_{x\in B\in \mathcal{B}} \bigg(\frac{1}{\diam(B)^{\beta p}}\vint_B \lvert u(y)-u_B\rvert^p\,d\mu(y)\bigg)^{1/p}\,,\qquad x\in X\,,
\end{equation}
whenever $u\colon X\to \R$ is a $\beta$-H\"older function. 
The supremum above is defined to be zero, if there is no ball $B$ in $\mathcal{B}$ that contains the point $x$.

We are {\em primarily} interested in the 
localized maximal function $M^{\sharp,p}_{\beta,\mathcal{B}_0}u$ that is associated with the ball family
\begin{equation}\label{e.B_0}
\mathcal{B}_0=\{B\subset X\,:\, B\text{ is a ball such that }2B\subset B_0\}\,;
\end{equation}
here  and in the statement of Theorem \ref{t.main_local}, the set $B_0\subset X$ of localization is a fixed ball, and the case $X=B_0$ is allowed but then $X$ is of course necessarily bounded.

\begin{theorem}\label{t.main_local}
Suppose we are given  a $\mathcal{D}$-structure in a geodesic space $X$, with
exponents $1<p<\infty$ and $0<\beta\le 1$. 
Let $K_{p,p}>0$ be the constant for the $(p,p)$-Poincar\'e inequality 
 as in condition (D1') of Theorem \ref{t.pp_poincare}.
Let $k\in \N$, $0\le \varepsilon< p-1$,
 and $\alpha=\beta p^2/(2(s+\beta p))>0$ with $s=\log_2 c_\mu$. 
 Suppose that $B_0\subset X$ is a fixed ball.
Then inequality
\begin{equation}\label{e.loc_des}
\begin{split}
\int_{B_0}\big( M^{\sharp,p}_{\beta,\mathcal{B}_0} u\big)^{p-\varepsilon}\,d\mu
&\le C_1\bigg(2^{k(\varepsilon-\alpha)}+\frac{K_{p,p}4^{k\varepsilon}}{k^{p-1}}\bigg)\int_{B_0} \big( M^{\sharp,p}_{\beta,\mathcal{B}_0} u\big)^{p-\varepsilon}\,d\mu
\\&\qquad+C_1 C(k,\varepsilon)K_{p,p}\int_{B_0\setminus \{M^{\sharp,p}_{\beta,\mathcal{B}_0} u=0\}} g^p\big( M^{\sharp,p}_{\beta,\mathcal{B}_0} u\big)^{-\varepsilon}\,d\mu
\end{split}
\end{equation}
holds for each $u\in\mathrm{Lip}_\beta(X)$ and every $g\in\mathcal{D}(u)$. Here the constant
$C_1>0$ depends only on the parameters $\beta$, $p$, $c_\mu$; and $C(k,\varepsilon)=(4^{k\varepsilon}-1)/\varepsilon$ if 
$\varepsilon>0$ and $C(k,0)=k$.
\end{theorem}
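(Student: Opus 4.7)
The plan is to establish, for each ball $B\in\mathcal{B}_0$, a pointwise estimate that splits the localized sharp maximal function at the scale of $B$ into a small self-absorbing term and a term involving $g$, and then to integrate against $(M^{\sharp,p}_{\beta,\mathcal{B}_0}u)^{-\varepsilon}\,d\mu$ over $B_0$. The two crucial ingredients are the $(p,p)$-Poincar\'e inequality (D1') of Theorem~\ref{t.pp_poincare} (used, as advertised in the introduction, at one single place in the argument) and the gluing axiom (D4), which produces a controlled $\mathcal{D}$-function for a $\beta$-H\"older truncation of $u$ outside a bad set.

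\textbf{Construction of the truncations.} Fix $B\in\mathcal{B}_0$, $u\in\mathrm{Lip}_\beta(X)$, and $g\in\mathcal{D}(u)$. For a threshold $\lambda>0$ to be chosen later and for $i=1,\dots,k$, set $\lambda_i=2^i\lambda$ and
\[
E_i=\bigl\{y\in B\,:\,M^{\sharp,p}_{\beta,\mathcal{B}_0}u(y)>\lambda_i\bigr\}.
\]
A Campanato-type argument, exploiting that balls $B'\in\mathcal{B}_0$ containing $y$ exhaust arbitrarily small scales around each density point $y\in X\setminus E_i$ (since $B_0$ is open and $B'$ only needs $2B'\subset B_0$), shows that $u$ agrees $\mu$-a.e.\ on $X\setminus E_i$ with a function that is $\beta$-H\"older on $X\setminus E_i$ with constant at most $C\lambda_i\diam(B_0)^{\beta}$ (or a localized version thereof). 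The McShane extension \eqref{McShane} then yields $v_i\in\mathrm{Lip}_\beta(X)$ with the same H\"older constant and $v_i=u$ $\mu$-a.e.\ on $X\setminus E_i$. Axiom (D4) provides
\[
g_i:=C\lambda_i\mathbf{1}_{E_i}+g\mathbf{1}_{X\setminus E_i}\in\mathcal{D}(v_i).
\]

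\textbf{Pointwise estimate from Poincar\'e.} Applying (D1') to the pair $(v_i,g_i)$ on $B$, splitting $B=E_i\cup (B\setminus E_i)$, and controlling $|u-v_i|\le C\lambda_i\diam(B)^\beta$ pointwise on $E_i$ via the McShane formula, we obtain the core pointwise inequality
\[
\frac{1}{\diam(B)^{\beta p}}\vint_B|u-u_B|^p\,d\mu
\le C\lambda_i^p\frac{\mu(E_i)}{\mu(B)}+CK_{p,p}\vint_B g^p\mathbf{1}_{X\setminus E_i}\,d\mu.
\]
The measure of $E_i$ is controlled by combining the weak-type consequence of (D1') with the lower bound \eqref{e.radius_measure}; balancing these two estimates via the doubling dimension $s=\log_2 c_\mu$ is what produces the specific exponent $\alpha=\beta p^2/(2(s+\beta p))$.

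\textbf{Integration and the role of $k$.} Taking the supremum over admissible $B\ni x$ on the left and integrating the resulting pointwise inequality against $(M^{\sharp,p}_{\beta,\mathcal{B}_0}u)^{-\varepsilon}\,d\mu$ over $B_0$, a layer-cake computation converts the sum over $i=1,\dots,k$ into geometric series $\sum_i 2^{i(\varepsilon-\alpha)}$ in the first term and $\sum_i 4^{i\varepsilon}$ in the second, dominated respectively by $2^{k(\varepsilon-\alpha)}$ and $C(k,\varepsilon)=(4^{k\varepsilon}-1)/\varepsilon$. The prefactor $k^{-(p-1)}$ multiplying $K_{p,p}4^{k\varepsilon}$ is obtained by averaging the $k$ pointwise inequalities with weights $1/k$ and applying H\"older's inequality with exponents $p$ and $p/(p-1)$ to absorb the bounded part of the $g$-contribution into the $M^\sharp$-term, while the unbounded part remains and contributes the last term of \eqref{e.loc_des}. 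The main obstacle is the Campanato-type H\"older estimate on $X\setminus E_i$, which must be valid $\mu$-a.e.\ and \emph{uniform in $i$}, together with the careful bookkeeping of constants needed to produce precisely the exponent $\alpha$ stated; at several steps Lemma~\ref{l.ball_measures} and Lemma~\ref{l.continuous} are used to move freely between balls contained in $B_0$, their intersections, and nearby concentric balls, thereby exploiting the geodesic hypothesis on $X$.
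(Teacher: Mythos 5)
Your proposal correctly identifies several key ingredients of the argument: truncation along dyadic superlevel sets of the sharp maximal function, the H\"older estimate on the complement of a superlevel set followed by McShane extension, the gluing axiom (D4), a single application of (D1'), and the Keith--Zhong averaging over $k$ dyadic levels to produce the $k^{-(p-1)}$ gain. However, there are two genuine gaps. First, the ``core pointwise inequality'' rests on the claim that $|u-v_i|\le C\lambda_i\diam(B)^\beta$ on $E_i$ ``via the McShane formula.'' This is unjustified: the McShane extension only controls $v_i$ through the values of $u$ on $X\setminus E_i$, so bounding $|u(y)-v_i(y)|$ for $y\in E_i$ requires controlling the oscillation of $u$ \emph{on the bad set} relative to its values on the good set, and any naive bound brings in the global H\"older constant of $u$, which must not appear in the final estimate. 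In the paper this is exactly the content of Lemma \ref{l.dyadic}: the integral $\int_{U_B^{2^k\lambda}}|u-u_{B\setminus U^{2^k\lambda}}|^p\,d\mu$ is controlled by covering $U_B^{2^k\lambda}$ with balls $B'_x$ of relative density exactly $1/2$ (via Lemma \ref{l.continuous}), and the gain $2^{-k\alpha}$ comes from comparing $\diam(5B')$ with $\diam(B)$ through \eqref{e.radius_measure} \emph{using an upper bound on the normalized oscillation of $u$ over $B$}. Your description of where $\alpha$ comes from (``weak-type consequence of (D1')'') does not produce this absorption.

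Second, and relatedly, that upper bound on the oscillation over $B$ is only available because $B$ is a \emph{stopping ball}: the paper runs a stopping-time construction (inside a Whitney decomposition of $B_0$, with parent balls $\pi(B)$) producing balls satisfying the two-sided inequality \eqref{e.loc_stop}, $\lambda<(\diam(B)^{-\beta p}\vint_B|u-u_B|^p)^{1/p}\le 32c_\mu^{5/p}\lambda$. Your proposal asserts the estimate for an arbitrary $B\in\mathcal{B}_0$ and then ``takes the supremum over admissible $B\ni x$,'' but a per-ball inequality whose right-hand side depends on $B$ (through $\mu(E_i)/\mu(B)$ and $\vint_B g^p$) does not convert into a distribution-function estimate for $M^{\sharp,p}_{\beta,\mathcal{B}_0}u$ by taking suprema; one needs the $5r$-covering of each superlevel set $Q^\lambda$ by disjoint stopping balls, the layer-cake formula in $\lambda$, and the bounded overlap (W2) of the Whitney balls to sum up. A smaller point: the averaging must be performed on the \emph{functions} (forming $h=\frac1k\sum_i u_{2^i\lambda}$ and its $\mathcal{D}$-function $g_h$, then applying (D1') once), exploiting the nesting $E_k\supset E_{k+1}\supset\cdots$ to telescope $(\frac1k\sum_i\kappa_i\mathbf{1}_{E_i})^p$; averaging $k$ separate Poincar\'e inequalities does not by itself yield the $k^{-p}$ factor. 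As written, the proposal would need Lemma \ref{l.dyadic} and the stopping construction supplied to become a proof.
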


Let us observe that the first term on the right-hand side of \eqref{e.loc_des}
is finite, since $u$ is assumed to be a $\beta$-H\"older function.
The following corollary is obtained when this term is absorbed to the left-hand side after choosing the numbers $k$ and $0\le \varepsilon<\varepsilon_0$
appropriately; for instance, we can choose $\varepsilon_0=1/k$ for a  large enough $k$. 

\begin{corollary}\label{c.main}
Suppose that we are given  a $\mathcal{D}$-structure in a geodesic space $X$, with
exponents $1<p<\infty$ and $0<\beta\le 1$.
Then
there exists some $0<\varepsilon_0<p-1$
with the property that  
for every $0\le \varepsilon<\varepsilon_0$ there is a constant $C>0$ such that inequality
\begin{equation}\label{e.loc_des_C}
\begin{split}
\int_{B_0}\big( M^{\sharp,p}_{\beta,\mathcal{B}_0} u\big)^{p-\varepsilon}\,d\mu \le 
C\int_{B_0\setminus \{M^{\sharp,p}_{\beta,\mathcal{B}_0} u=0\}} g^p\big( M^{\sharp,p}_{\beta,\mathcal{B}_0} u\big)^{-\varepsilon}\,d\mu
\end{split}
\end{equation}
holds whenever $B_0$ is a ball in $X$ and whenever $u\in\mathrm{Lip}_\beta(X)$ and $g\in\mathcal{D}(u)$.
\end{corollary}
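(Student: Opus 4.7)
The plan is to apply Theorem \ref{t.main_local} and absorb the first term on its right-hand side into the left-hand side. Since the constant $C_1$ depends only on $\beta$, $p$, $c_\mu$, the reduction amounts to choosing $k \in \N$ and $\varepsilon_0 > 0$ so that the coefficient
\[
F(k,\varepsilon) = C_1\bigg(2^{k(\varepsilon-\alpha)} + \frac{K_{p,p}\,4^{k\varepsilon}}{k^{p-1}}\bigg)
\]
stays at most $\tfrac{1}{2}$ uniformly for $\varepsilon \in [0,\varepsilon_0)$.

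Before rearranging, one must check that the left-hand side of \eqref{e.loc_des} is finite, since otherwise the absorption is not legitimate. If $u \in \mathrm{Lip}_\beta(X)$ with constant $\kappa \ge 0$, then for every ball $B$ and every $y \in B$ we have $\lvert u(y) - u_B\rvert \le \kappa\,\mathrm{diam}(B)^\beta$, so pointwise $M^{\sharp,p}_{\beta,\mathcal{B}_0} u \le \kappa$. Combined with $\mu(B_0) < \infty$ this yields $\int_{B_0}\bigl(M^{\sharp,p}_{\beta,\mathcal{B}_0}u\bigr)^{p-\varepsilon}\,d\mu \le \kappa^{p-\varepsilon}\mu(B_0) < \infty$ for every $0 \le \varepsilon < p$. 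This is the only place in the absorption scheme where the $\beta$-H\"older hypothesis on $u$ enters.

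For the parameters, first evaluate at $\varepsilon = 0$: since $\alpha > 0$ and $p-1 > 0$, the quantity $F(k,0) = C_1\bigl(2^{-k\alpha} + K_{p,p}/k^{p-1}\bigr)$ tends to $0$ as $k \to \infty$, so one can fix $k \in \N$ large enough that $F(k,0) \le \tfrac{1}{8}$. With this $k$ frozen, set $\varepsilon_0 = \min\{1/k,\,p-1\}$. For every $0 \le \varepsilon < \varepsilon_0$ one then has $k\varepsilon < 1$, hence $2^{k\varepsilon} \le 2$ and $4^{k\varepsilon} \le 4$, which gives the uniform bound $F(k,\varepsilon) \le 4\,F(k,0) \le \tfrac{1}{2}$.

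Inserting this bound into \eqref{e.loc_des}, absorbing $\tfrac{1}{2}\int_{B_0}\bigl(M^{\sharp,p}_{\beta,\mathcal{B}_0} u\bigr)^{p-\varepsilon}\,d\mu$ onto the left, and multiplying through by $2$ yields \eqref{e.loc_des_C} with $C = 2\,C_1 C(k,\varepsilon)K_{p,p}$, which is permitted to depend on $\varepsilon$. The only genuine obstacle is the finiteness check above; the rest is parameter bookkeeping, and $C(k,\varepsilon)$ causes no trouble because for the fixed $k$ it stays bounded on $[0,\varepsilon_0)$ (indeed $(4^{k\varepsilon}-1)/\varepsilon \to k\log 4$ as $\varepsilon \to 0^+$, consistent with the convention $C(k,0) = k$).
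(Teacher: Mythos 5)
Your proposal is correct and follows exactly the route the paper indicates: verify that the left-hand side of \eqref{e.loc_des} is finite because $u$ is $\beta$-H\"older (so that $M^{\sharp,p}_{\beta,\mathcal{B}_0}u$ is bounded by the H\"older constant and $\mu(B_0)<\infty$), then choose $k$ large and $\varepsilon_0\approx 1/k$ so that the coefficient of the first right-hand term is at most $\tfrac12$ uniformly in $\varepsilon$, and absorb. The parameter bookkeeping (bounding $2^{k\varepsilon}$ and $4^{k\varepsilon}$ by constants when $k\varepsilon<1$) is exactly what makes the paper's suggested choice $\varepsilon_0=1/k$ work, so nothing further is needed.
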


\begin{question}\label{q.maximal}
Corollary \ref{c.main} suggests the following problem related to weighted
inequalities. Fix $1<p<\infty$ and $0<\beta \le 1$. Let us denote
by $\mathcal{B}$ the family of all balls in $X$.
Then, for some interesting  $\mathcal{D}$-structure, is it possible to
characterize those weights $w$ in $X$
for which inequality
\begin{equation*}
\begin{split}
\int_{X}\big( M^{\sharp,p}_{\beta,\mathcal{B}} u\big)^{p}\,w\,d\mu \le 
C\int_{X} g^p\,w\,d\mu
\end{split}
\end{equation*}
holds for each $u\in\mathrm{Lip}_\beta(X)$ and for every $g\in\mathcal{D}(u)$?
To our knowledge, this is an open problem even when $X=\R^n$ equipped with the Lebesgue measure.
\end{question}

\begin{remark}
It is instructive to reflect Question \ref{q.maximal} and Corollary \ref{c.main} by considering the following simple analogy
with $X=\R^n$ equipped with the Lebesgue measure.
If $1<p<\infty$, then the Muckenhoupt $A_p$ class consists precisely of weights $w$ for which the 
maximal operator
\[u\mapsto Mu=\sup\{\lvert u\rvert_B\mathbf{1}_B\,:\,B\subset \R^n\text{ is any ball}\,\}\]
is bounded on $L^p(w\,dx)$.
Whereas Question \ref{q.maximal} asks for a counterpart of this classical  result in the present setting,
Corollary \ref{c.main}, in turn, corresponds to a rather
curious special case. Namely, let $0\le \delta <1$ and let $u$ be a 
measurable function with $0<\lVert u\rVert_\infty<\infty$. Then $(Mu)^\delta$ is a Muckenhoupt $A_1$ weight
whose $A_1$-constant is independent of $u$; cf.\ \cite[Theorem 3.4 in \S2]{MR807149}.
As a consequence, the function $w=(Mu)^{-\varepsilon}$ is an $A_p$ weight
if $\varepsilon=\delta(p-1)>0$. Moreover,
the $A_p$ constant of this weight is independent of $u$.
By the boundedness
of the maximal function in $L^p(w\,dx)$,
and the fact that $w(x)\le \lvert u(x)\rvert^{-\varepsilon}$ almost everywhere, we find that
\begin{align*}
\int_{\R^n} \big(Mu(x)\big)^{p-\varepsilon}\,dx
&=\int_{\R^n} \big(Mu(x)\big)^p\,w(x)\,dx
\\&\le C\int_{\R^n} \lvert u(x)\rvert^p\, w(x)\,dx
\le C\int_{\R^n} \lvert u(x)\rvert^{p-\varepsilon}\,dx\,.
\end{align*}
In some cases, see \S\ref{s.Keith_Zhong} in particular, we can further adapt
this computation to the present setting.
\end{remark}

The proof of Theorem \ref{t.main_local} is completed in \S\ref{ss.main_local}. 
For the proof, we need preparations
that are treated in \S\ref{ss.Whitney} -- \S\ref{ss.auxiliary_local}.
At this stage, we already fix $X$, the $\mathcal{D}$-structure, $K_{p,p}$, $B_0\subsetneq X$, $\mathcal{B}_0$, $p$, $\beta$, $\varepsilon$, $k$ and $u$ as
in the statement of Theorem \ref{t.main_local}.
We refer to these objects throughout 
\S\ref{s.main}
without further notice.   Notice, however, that the function $g$ is not yet fixed.  

Let us emphasize that the ball $B_0$ in the proof below is further assumed to be a strict subset of $X$. That is, we will only focus on the case $B_0\not=X$. We remark that if $B_0=X$, then $X$ is bounded and the following Whitney cover 
$\mathcal{W}_0$ is replaced with the singleton $\{Q=B_0\}$. The other modifications in this easier special case are straightforward and we omit the details.

\subsection{Whitney ball covering}\label{ss.Whitney} We need a Whitney ball covering 
$\mathcal{W}_0=\mathcal{W}(B_0)$ of the ball $B_0\subsetneq X$.
This countable family with good covering properties
is comprised of the so-called {\em Whitney balls} that are of the form $Q=B(x_Q,r_Q)\in\mathcal{W}_0$,
with center $x_Q\in B_0$ and radius
\[
r_Q=\frac{\dist(x_Q, X\setminus B_0)}{128}>0\,.
\]
The $4$-dilated Whitney ball is denoted by $Q^*=4Q=B(x_Q,4 r_Q)$ whenever $Q\in\mathcal{W}_0$.
Even though the Whitney balls need not be pairwise disjoint, they nevertheless have 
the following standard covering properties with bounded overlap; cf.\ \cite[pp.~77--78]{MR2867756}.
\begin{itemize}
\item[(W1)] $B_0=\bigcup_{Q\in\mathcal{W}_0} Q$;
\item[(W2)] $\sum_{Q\in\mathcal{W}_0} \mathbf{1}_{Q^*}\le C\mathbf{1}_{B_0}$ for some constant $C=C(c_\mu)>0$.
\end{itemize}
The facts (W3)--(W6) below 
for any Whitney ball $Q=B(x_Q,r_Q)\in\mathcal{W}_0$
are straightforward to verify
by using inequality \eqref{e.diams} and the assumption $B_0\subsetneq X$; we omit the simple proofs. 
Below we refer to the family $\mathcal{B}_0$ of
balls that is defined in \eqref{e.B_0} by using the fixed ball $B_0$.
\begin{itemize} 
\item[(W3)] If $B\subset X$ is a ball such that $B\cap Q\not=\emptyset\not=2B\cap(X\setminus Q^*)$, then
$\diam(B)\ge 3r_Q/4$;
\item[(W4)] If  $B\subset Q^*$ is a ball, then $B\in\mathcal{B}_0$;
\item[(W5)] If $B\subset Q^*$ is a ball, $x\in B$  and $0<r\le \diam(B)$, then
$B(x,5r)\in\mathcal{B}_0$;
\item[(W6)] If $x\in Q^*$ and $0<r\le 2\diam(Q^*)$, then $B(x,r)\in\mathcal{B}_0$.
\end{itemize}
Observe that there is some overlap between
the properties (W4)--(W6). The slightly different formulations will conveniently 
guide the reader
in the sequel.

\subsection{Fractional sharp maximal functions}
We abbreviate
$M^{\sharp} u=M^{\sharp,p}_{\beta,\mathcal{B}_0} u$
and denote \[U^{\lambda}=\{x\in B_0\,:\,M^{\sharp} u(x)>\lambda\}\,,\qquad \lambda>0\,.\]
The sets $U^\lambda$ are open in $X$.
If $E\subset X$ is a Borel set and $\lambda>0$, we write $U^\lambda_E=U^\lambda \cap E$.
We also need a certain smaller maximal function that is localized to Whitney balls.
More specifically, for each  $Q\in\mathcal{W}_0$, we first consider the ball family\footnote{Let us emphasize that it is important to use $Q^*$ in the 
definition for $\mathcal{B}_Q$ instead of $Q$.
}
\[
\mathcal{B}_{Q}=\{B\subset X\,:\,  B  \text{ is a ball such that }B\subset Q^*\}
\]
and define  $M^\sharp_{Q} u=M^{\sharp,p}_{\beta,\mathcal{B}_{Q}} u$.
By using these individual maximal functions, we then define a {\em Whitney-ball localized sharp maximal function}\footnote{It is
equally important to use $\mathbf{1}_Q$ instead of $\mathbf{1}_{Q^*}$ in the
definition of $M^\sharp_{\mathrm{loc}} u$; these are delicate matters and related to the latter selection of stopping balls with the aid of condition (W3).}
\[
M^\sharp_{\textup{loc}} u = \sup_{Q\in\mathcal{W}_0} \mathbf{1}_Q M^\sharp_{Q} u\,.
\]
If $\lambda>0$ and $Q\in\mathcal{W}_0$, we write \begin{equation}\label{e.super}
Q^\lambda = \{x\in Q\,:\, M^\sharp_Q u(x)>\lambda\}
\quad\text{and}\quad V^{\lambda}=\{x\in B_0\,:\, M^\sharp_{\textup{loc}}u(x)>\lambda\}\,.
\end{equation}

We need the following
norm estimate between the different maximal functions. 
Its purpose, roughly speaking, is to create space for the forthcoming stopping balls in \S\ref{s.stopping} to expand, without losing their control in terms of $M^{\sharp} u$.
On the other hand, controlling this expansion is the only purpose for introducing the different maximal functions aside from $M^\sharp u$.

\begin{lemma}\label{l.big_to_small_ball}
There is a constant $C=C(c_\mu,p,\beta)\ge 1$ such that
\[
\int_{B_0} \big(M^{\sharp} u(x)\big)^{p-\varepsilon}\,d\mu(x)\le C\int_{B_0} \big(M^\sharp_{\textup{loc}} u(x)\big)^{p-\varepsilon}\,d\mu(x)\,.
\]
\end{lemma}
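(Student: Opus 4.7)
The inequality $M^\sharp_{\textup{loc}} u \le M^\sharp u$ holds pointwise on $B_0$, since property (W4) gives $\mathcal{B}_Q \subset \mathcal{B}_0$ for every Whitney ball $Q \in \mathcal{W}_0$. The content of the lemma is therefore the reverse integral inequality, which I plan to derive from a weak-type distribution-function estimate
\[
\mu\bigl(\{x \in B_0 : M^\sharp u(x) > \lambda\}\bigr) \le C\,\mu\bigl(\{x \in B_0 : M^\sharp_{\textup{loc}} u(x) > c\lambda\}\bigr), \qquad \lambda > 0,
\]
with $C \ge 1$ and $c \in (0,1]$ depending only on $c_\mu$, $p$, $\beta$. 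The $L^{p-\varepsilon}$ bound then follows by the layer-cake formula after changing variables $\lambda \mapsto c\lambda$.

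For the weak-type estimate, fix $\lambda > 0$ and $x \in U^\lambda$. I pick a near-witness $B_x \in \mathcal{B}_0$ with $x \in B_x$ and
\[
\bigl(\diam(B_x)^{-\beta p} \vint_{B_x} \lvert u(y) - u_{B_x}\rvert^p \,d\mu(y)\bigr)^{1/p} > \lambda/2,
\]
and choose by (W1) a Whitney ball $Q_x \ni x$. Either $B_x \subset Q_x^*$, in which case $B_x \in \mathcal{B}_{Q_x}$ so $M^\sharp_{Q_x} u(x) > \lambda/2$ and $x \in V^{\lambda/2}$; or $B_x \not\subset Q_x^*$, in which case property (W3) yields $\diam(B_x) \ge 3 r_{Q_x}/4$, the ``large ball'' case.

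The main obstacle is this latter case. My plan is to apply the $5r$-covering lemma to the family of large witness balls, obtaining a disjoint subfamily $\{B_{x_j}\}$ whose $5$-dilates cover the bad set $U^\lambda \setminus V^{\lambda/2}$, and then to bound $\sum_j \mu(B_{x_j}) \le C\,\mu(V^{c\lambda})$ by showing that each $B_{x_j}$ forces a subset of $V^{c\lambda}$ of measure $\gtrsim \mu(B_{x_j})$. The geometric input is that the Whitney ball $Q'_j \in \mathcal{W}_0$ containing the center of $B_{x_j}$ has $r_{Q'_j} \gtrsim r_{B_{x_j}}$ (an elementary estimate using $2B_{x_j} \subset B_0$), so concentric sub-balls of $B_{x_j}$ at an appropriate scale lie inside $(Q'_j)^*$. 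A chaining argument via Lemma \ref{l.chain} inside $B_{x_j}$, combined with a telescoping triangle inequality for the mean values $u_{B_i}$ along the chain, should then transfer a fixed positive fraction of the $L^p$-oscillation of $u$ on $B_{x_j}$ down to such a sub-ball, yielding $M^\sharp_{Q'_j} u \ge c\lambda$ on a positive-measure portion of $Q'_j$, which, by bounded overlap (W2), contributes to $\mu(V^{c\lambda})$ proportionally.

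The delicate point is that the oscillation need not transfer in full to any single sub-ball, because small scales can kill it; hence the chain must be tuned so that an intermediate-scale ball in it retains a controlled fraction of the oscillation. This is where the dependence of $C$ and $c$ on $c_\mu$, $p$, $\beta$ is forced, and tracking these constants carefully through the chaining estimate is the most technical step of the plan.
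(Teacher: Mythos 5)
Your proposal is correct and follows essentially the same route as the paper: the paper likewise reduces the lemma to the distributional inequality $\mu(U^\lambda)\le C_1\,\mu(V^{\lambda/C_1})$ via the layer-cake formula, disposes of the case of a deeply contained witness ball by showing (exactly as you do) that such a ball lies in $Q^*$ for the Whitney ball $Q\ni x$, and handles the large-witness-ball case by the covering-plus-chaining argument you sketch, which the paper simply cites as an adaptation of \cite[Lemma 12.3.1]{MR3363168} (itself based on \cite[Lemma 3.2.1]{MR2415381}). Your level of detail on that hard case matches, and indeed slightly exceeds, what the paper itself provides.
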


\begin{proof}
Recall that
\[
\int_{B_0} \big(M^\sharp(x)\big)^{p-\varepsilon}\,d\mu(x)
=(p-\varepsilon)\int_0^\infty \lambda^{p-\varepsilon}\mu(\{x\in B_0\,:\, M^\sharp u(x)>\lambda\})\frac{d\lambda}{\lambda}\,.
\]
By using also the corresponding identity for the  maximal function $M^\sharp_{\textup{loc}} u$, we see that it suffices to prove that inequality \begin{equation}\label{e.goal_big}
\mu(U^\lambda)\le C_1\mu(V^{\lambda/{C_1}})
\end{equation} holds for some $C_1=C(c_\mu,p,\beta)\ge 1$. Indeed, then
one can choose $C=C_1^{1+p}$.
We will now show how inequality \eqref{e.goal_big} follows
from an adaptation of \cite[Lemma 12.3.1]{MR3363168}. However, the
simple but tedious modification of the last rather short lemma is left to the interested reader. 

Fix $x\in B_0$ and let us consider
any ball $B=B(x_B,r_B)$ which satisfies the two conditions $x\in B$ and $256B=B(x_B,256r_B)\subset B_0$. By the covering condition (W1) there is a Whitney ball
$Q=B(x_Q,r_Q)\in\mathcal{W}_0$ such that  $x\in Q$. 
We claim that $B\subset Q^*$. 
In order to show this, we fix $y\in B\subset B(x,2r_B)$. Since
$B(x,255 r_B)\subset B_0$, we find that
\begin{align*}
d(y,x_Q)&\le d(y,x)+ d(x,x_Q)<2r_B+r_Q\\&\le \frac{2}{255}\cdot \dist(x,X\setminus B_0)+\frac{\dist(x_Q,X\setminus B_0)}{128}\\
&\le d(x,x_Q)+\frac{2}{255}\cdot \dist(x_Q,X\setminus B_0)+\frac{\dist(x_Q,X\setminus B_0)}{128}\\
&\le \frac{\dist(x_Q,X\setminus B_0)}{128}+\frac{2}{255}\cdot\dist(x_Q,X\setminus B_0)+\frac{\dist(x_Q,X\setminus B_0)}{128}\\
&< \frac{\dist(x_Q,X\setminus B_0)}{32}=4r_Q\,.
\end{align*}
It follows that $y\in 4Q=Q^*$. We have shown that $B\subset Q^*$, and therefore $x\in B\in\mathcal{B}_Q$.
Thus,
\[
M^\sharp_{\textup{loc}} u(x)\ge \mathbf{1}_Q(x)M^\sharp_Q u(x)\ge \bigg(\frac{1}{\diam(B)^{\beta p}}\vint_B \lvert u(y)-u_B\rvert^p\,d\mu(y)\bigg)^{1/p}\,.
\]
With the aid of this estimate, the distributional inequality \eqref{e.goal_big} follows from an adaptation of
\cite[Lemma 12.3.1]{MR3363168} that, in turn, is based upon \cite[Lemma 3.2.1]{MR2415381}.
\end{proof}

The following lemma is a slight variant of \cite[Lemma 3.6]{MR1681586}.

\begin{lemma}\label{l.arm_local}
Fix $\lambda>0$ and $Q\in\mathcal{W}_0$.  
Then inequality \[
\lvert u(x)-u(y)\rvert \le C(\beta,c_\mu) \lambda d(x,y)^\beta\]
holds whenever $x,y\in  Q^*\setminus U^{\lambda}$.
\end{lemma}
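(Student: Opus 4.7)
Without loss of generality assume $x\neq y$, and set $r=d(x,y)>0$. The plan is to run the standard sharp-maximal-function telescoping from $x$ to $y$ through an intermediate ball $B_*=B(x,2r)$, being careful that every ball used in the chain lies in $\mathcal{B}_0$ so that the pointwise bound $M^\sharp u\le\lambda$ at the centre can actually be invoked.

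First I would verify admissibility. Since $x,y\in Q^*$ we have $r=d(x,y)\le\diam(Q^*)$, and property (W6) then gives $B(x,\rho),B(y,\rho)\in\mathcal{B}_0$ for every $0<\rho\le 2r\le 2\diam(Q^*)$. In particular $B_*\in\mathcal{B}_0$ and $y\in B_*$.

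Next I would split $|u(x)-u(y)|\le|u(x)-u_{B_*}|+|u_{B_*}-u(y)|$ and treat each term by a dyadic chain. For the first term, set $B_i=B(x,2^{1-i}r)$, so $B_0=B_*$. Because $u$ is $\beta$-H\"older, $u_{B_i}\to u(x)$, hence $|u(x)-u_{B_*}|\le\sum_{i=0}^\infty|u_{B_{i+1}}-u_{B_i}|$. Using $B_{i+1}\subset B_i$, doubling, H\"older's inequality, and $M^\sharp u(x)\le\lambda$, each summand is controlled by
\[
|u_{B_{i+1}}-u_{B_i}|\le c_\mu\bigg(\vint_{B_i}|u-u_{B_i}|^p\,d\mu\bigg)^{1/p}\le c_\mu\,\diam(B_i)^\beta M^\sharp u(x)\le c_\mu(2^{2-i}r)^\beta\lambda,
\]
and a geometric summation yields $|u(x)-u_{B_*}|\le C(c_\mu,\beta)\lambda r^\beta$. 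For the second term I would use the chain $B'_0=B_*$, $B'_i=B(y,2^{1-i}r)$ for $i\ge 1$. The initial step $|u_{B_*}-u_{B'_1}|$ is handled via the inclusion $B'_1\subset B_*$ together with $\mu(B_*)/\mu(B'_1)\le c_\mu^2$ (from $B_*\subset B(y,3r)$ and doubling), so it is controlled by $c_\mu^2\,\diam(B_*)^\beta M^\sharp u(x)\le c_\mu^2(4r)^\beta\lambda$; the remaining differences are estimated exactly as above, this time using $M^\sharp u(y)\le\lambda$. A second geometric sum gives $|u_{B_*}-u(y)|\le C(c_\mu,\beta)\lambda r^\beta$, and adding the two bounds proves the asserted inequality.

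The only real obstacle is the bookkeeping that forces one to choose the chain carefully: at each step the ball on which the oscillation estimate against $M^\sharp u$ is applied must belong to $\mathcal{B}_0$ and contain the point ($x$ or $y$) whose sharp-maximal value is being used. Property (W6), together with $x,y\in Q^*\subset B_0$ and the a priori bound $r\le\diam(Q^*)$, supplies exactly what is required; no ingredient beyond doubling and the very definition of $M^{\sharp,p}_{\beta,\mathcal{B}_0}$ enters the argument, which is why the constant in the conclusion depends only on $\beta$ and $c_\mu$.
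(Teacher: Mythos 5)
Your proof is correct and follows essentially the same route as the paper's: a telescoping chain of shrinking balls at each of $x$ and $y$, a single transfer step between balls centred at the two points, admissibility of every ball in the chain secured by (W6) together with $d(x,y)\le\diam(Q^*)$, and the pointwise bounds $M^{\sharp}u(x),M^{\sharp}u(y)\le\lambda$. The only difference is cosmetic (you chain through $B(x,2r)$ and $B(y,r)$ where the paper uses $B(x,d)$ and $B(y,2d)$), so nothing further is needed.
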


\begin{proof}
Let us remark that the property (W6) is used below without further notice.
Fix $\lambda>0$, $Q\in\mathcal{W}_0$ and $x,y\in  Q^*\setminus U^{\lambda}$.
Write $d=d(x,y)$.
Since $Q^*\subset B_0$, it suffices to prove that 
\begin{equation}\label{e.des_hld}
\lvert u(x)-u(y)\rvert \le C(\beta,c_\mu) d(x,y)^\beta
\big(M^{\sharp} u(x)+M^{\sharp} u(y)\big)\,.
\end{equation}
We first consider a point $z\in Q^*$ and a radius $0<r\le 2\diam(Q^*)$. Write
$B_i=B(z,2^{-i}r)\in\mathcal{B}_0$ for each $i\in \{0,1,\ldots\}$. Then, with the standard `telescoping' argument, see for instance the proof of \cite[Lemma 3.6]{MR1681586}, we obtain
\begin{align*}
\lvert u(z)-u_{B(z,r)}\rvert
&\le c_\mu \sum_{i=0}^\infty \vint_{B_i} \lvert u-u_{B_i}\rvert\,d\mu \\ 
&\le c_\mu \sum_{i=0}^\infty 2^{\beta(1-i)}r^\beta \bigg(\frac{1}{\diam(B_i)^{\beta p}}\vint_{B_i} \lvert u-u_{B_i}\rvert^p\,d\mu\bigg)^{1/p}\\
%:
&\le c_\mu  M^{\sharp}u(z)\cdot \sum_{i=0}^\infty 2^{\beta(1-i)}r^\beta 
\le C(\beta,c_\mu)\, r^\beta M^{\sharp}u(z)\,.
\end{align*}
As a consequence, since $y\in Q^*$ and $0<d=d(x,y)\le \diam(Q^*)$, 
\begin{align*}
\lvert u(y)-u_{B(x,d)}\rvert &\le \lvert u(y)-u_{B(y,2d)}\rvert+\lvert u_{B(y,2d)}-u_{B(x,d)}\rvert\\
&\le C(\beta,c_\mu)\, d^\beta M^{\sharp}u(y) + \frac{\mu(B(y,2d))}{\mu(B(x,d))}\vint_{B(y,2d)} \lvert u-u_{B(y,2d)}\rvert\,d\mu\\
&\le C(\beta,c_\mu)\,d^\beta \Biggl[M^{\sharp}u(y) +  \Biggl(\frac{1}{\diam(B(y,2d))^{\beta p}}\vint_{B(y,2d)} \lvert u-u_{B(y,2d)}\rvert^p\,d\mu\bigg)^{1/p}\Biggr]\\
&\le C(\beta,c_\mu)\, d^\beta M^{\sharp}u(y)\,.
\end{align*}
It follows that
\begin{align*}
\lvert u(x)-u(y)\rvert&\le \lvert u(x)-u_{B(x,d)}\rvert + \lvert u_{B(x,d)}-u(y)\rvert
\le C(\beta,c_\mu)\, d^\beta \big( M^{\sharp}u(x)+ M^{\sharp}u(y) \big)\,,
\end{align*}
which is the desired inequality \eqref{e.des_hld}.
\end{proof}

\subsection{Stopping construction}\label{s.stopping}
The following stopping construction is needed for each Whitney ball separately.
Fix a Whitney ball $Q\in\mathcal{W}_0$.
The number
\[
\lambda_Q=\bigg(\frac{1}{\mathrm{diam}(Q^*)^{\beta p}} \vint_{Q^*} \lvert u(y)-u_{Q^*}\rvert^p\,d\mu(y)\bigg)^{1/p}
\]
serves as a certain treshold value.
Fix a level $\lambda>\lambda_Q/2$.
We will construct a stopping family $\mathcal{S}_\lambda(Q)$ of balls
whose $5$-dilations, in particular, cover the set $Q^\lambda$; recall the definition from \eqref{e.super}.
As a first step towards the stopping balls, let
$B\in\mathcal{B}_{Q}$ be such that $B\cap Q\not=\emptyset$. The {\em parent ball} of $B$ is then defined to be $\pi(B)=2B$ if $2B\subset Q^*$ and $\pi(B)=Q^*$ otherwise.
Observe that $B\subset \pi(B)\in\mathcal{B}_Q$ and $\pi(B)\cap Q\not=\emptyset$ so that
the grandparent $\pi(\pi(B))$ is well defined, and so on and so forth.
Moreover, by inequalities \eqref{e.doubling} and \eqref{e.diams}, and property (W3) if needed, we have $\mu(\pi(B))\le c_\mu^5 \mu(B)$
and $\diam(\pi(B))\le 16\diam(B)$.

Now we come to the actual stopping argument. Let us fix a point $x\in Q^\lambda\subset Q$.
If $\lambda_Q/2<\lambda<\lambda_Q$, then we choose
$B_x=Q^*\in\mathcal{B}_{Q}$. If 
$\lambda\ge \lambda_Q$, then by using the condition $x\in Q^\lambda$ we first choose a starting ball $B$, with $x\in B\in\mathcal{B}_Q$, such that
\[
\lambda <
\bigg(\frac{1}{\mathrm{diam}(B)^{\beta p}} \vint_{B} \lvert u(y)-u_{B}\rvert^p\,d\mu(y)\bigg)^{1/p}\,.
\]
We continue by looking at the balls $B\subset \pi(B) \subset \pi(\pi(B))\subset \dotsb$ 
and we 
stop at the first ball among them, denoted by $B_x\in\mathcal{B}_{Q}$, that satisfies the following stopping conditions: 
\begin{align*}
\begin{cases} 
\lambda <
\displaystyle\bigg(\frac{1}{\mathrm{diam}(B_x)^{\beta p}} \vint_{B_x} \lvert u(y)-u_{B_x}\rvert^p\,d\mu(y)\bigg)^{1/p}\\
\displaystyle \bigg(\frac{1}{\mathrm{diam}(\pi(B_x))^{\beta p}} \vint_{\pi(B_x)} \lvert u(y)-u_{\pi(B_x)}\rvert^p\,d\mu(y)\bigg)^{1/p}\le \lambda \,.
 \end{cases}
\end{align*}
The inequality $\lambda\ge \lambda_Q$ in combination with assumption $B_0\subsetneq X$ ensures that there always is such a stopping ball.
In both cases above,  the chosen ball $B_x^\lambda=B_x\in\mathcal{B}_Q$  contains
the point $x$ and satisfies inequalities
\begin{equation}\label{e.loc_stop}
\lambda<
\bigg(\frac{1}{\mathrm{diam}(B_x^\lambda)^{\beta p}} \vint_{B_x^\lambda} \lvert u(y)-u_{B_x^\lambda}\rvert^p\,d\mu(y)\bigg)^{1/p}\le 32c_\mu^{5/p} \lambda\,.
\end{equation}
Now, by using the $5r$-covering lemma,  we obtain a countable disjoint family 
\[\mathcal{S}_\lambda(Q)\subset \{B_x^\lambda\,:\, x\in Q^\lambda\}\,,\qquad \lambda>\lambda_Q/2\,,\]
of {\em stopping balls}  such that $Q^\lambda\subset \cup_{B\in\mathcal{S}_\lambda(Q)}
5B$.
Let us remark that, by the condition (W4) and stopping inequality \eqref{e.loc_stop}, we have $B\subset U^{\lambda}_{Q^*}=U^{\lambda}\cap Q^*$ 
if
$B\in \mathcal{S}_\lambda(Q)$ and $\lambda>\lambda_Q/2$.

\subsection{Auxiliary local results}\label{ss.auxiliary_local}

We prove
two technical results: Lemma \ref{l.dyadic} and Lemma \ref{l.mainl_local}.
Even though 
the following lemma is a counterpart of \cite[Lemma 3.1.2]{MR2415381},
the adaptation to our setting is non-trivial.
Recall that $k\in\N$ is a fixed number and
$\alpha=\beta p^2/(2(s+\beta p))>0$ with $s=\log_2 c_\mu> 0$.

\begin{lemma}\label{l.dyadic}
Suppose that $Q\in\mathcal{W}_0$ and let $\lambda>\lambda_Q/2$. Then inequality
\begin{equation}\label{e.abso}
\begin{split}
&\frac{1}{\mathrm{diam}(B)^{\beta p}}\int_{U_{B}^{2^k\lambda}}\lvert u(x)-u_{B\setminus U^{2^k\lambda}}\rvert^p\,d\mu(x)\\&\le C(p,c_\mu)2^{-k\alpha} (2^{k}\lambda)^p 
\mu(U_{B}^{2^k\lambda})
+\frac{C(p,c_\mu)}{\mathrm{diam}(B)^{\beta p}}\int_{B\setminus U^{2^k\lambda}} \lvert u(x)-u_{B\setminus U^{2^k\lambda}}\rvert^p\,d\mu(x)
\end{split}
\end{equation}
holds whenever $B\in\mathcal{S}_\lambda(Q)$
is such that $\mu (U_{B}^{2^k\lambda}) < \mu(B)/2$.
\end{lemma}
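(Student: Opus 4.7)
The key ingredient is the $\beta$-H\"older continuity of $u$ on $W := B \setminus U^{2^k\lambda}$ with constant $\kappa := C(\beta,c_\mu)2^k\lambda$, supplied by Lemma~\ref{l.arm_local} applied at level $2^k\lambda$ (valid because $B \in \mathcal{S}_\lambda(Q) \subset \mathcal{B}_Q$ gives $B \subset Q^*$). The McShane formula~\eqref{McShane} extends $u|_W$ to a global $\beta$-H\"older function $\tilde u \colon X \to \R$ with the same constant $\kappa$ satisfying $\tilde u = u$ on $W$. Writing $F := U_B^{2^k\lambda}$ (which is open since $M^{\sharp,p}_{\beta,\mathcal{B}_0}u$ is lower semicontinuous), I would split
\[
\int_F |u - u_W|^p\,d\mu \le 2^{p-1}\int_F |u - \tilde u|^p\,d\mu + 2^{p-1}\int_F |\tilde u - u_W|^p\,d\mu
\]
and handle the two terms separately.

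For each $x \in F$, Lemma~\ref{l.continuous} yields continuity of $r \mapsto \mu(B(x,r)\cap F)/\mu(B(x,r))$ on the geodesic space $X$; since this ratio equals $1$ as $r \to 0^+$ (because $F$ is open) and drops below $\mu(F)/\mu(B) < 1/2$ once $B \subset B(x,r)$, there exists a critical radius $r(x) > 0$ with $\mu(B(x,r(x)) \cap F) = \tfrac{1}{2}\mu(B(x,r(x)))$. The $5r$-covering lemma then extracts a disjoint subfamily $\{B_i := B(x_i, r(x_i))\}$ with $F \subset \bigcup_i 5B_i$. In the principal case $B_i \subset B$ (boundary cases, where $r(x_i)$ is comparable to $\diam(B)$, are absorbed into trivial bounds using $\mu(F) < \mu(B)/2$), one has $\mu(B_i \cap W) = \tfrac{1}{2}\mu(B_i)$, and H\"older continuity of $\tilde u$ combined with Jensen's inequality yields
\[
|\tilde u(x_i) - u_W|^p \le C(2^k\lambda)^p r(x_i)^{\beta p} + \frac{C}{\mu(B_i)}\int_W |u - u_W|^p\,d\mu.
\]
Integrating this pointwise control over $F$ via the cover, and invoking doubling to pass from $5B_i$ to $B_i$, reduces $\int_F|\tilde u - u_W|^p\,d\mu$ to a constant times $(2^k\lambda)^p\sum_i r(x_i)^{\beta p}\mu(B_i)$ plus a constant multiple of the second term on the right of~\eqref{e.abso}.

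The decisive step is to establish $\sum_i r(x_i)^{\beta p}\mu(B_i) \le C 2^{-k\alpha}\diam(B)^{\beta p}\mu(F)$ with $\alpha = \beta p^2/(2(s+\beta p))$. At each $x_i \in F$, the condition $M^{\sharp,p}_{\beta,\mathcal{B}_0} u(x_i) > 2^k\lambda$ supplies a ball of large $L^p$-oscillation; comparing with the stopping upper bound $\int_B|u-u_B|^p\,d\mu \le C\lambda^p \diam(B)^{\beta p}\mu(B)$ (inherited from the parent ball through \eqref{e.loc_stop}) and invoking~\eqref{e.radius_measure} forces the characteristic oscillation scale at $x_i$ to be $\lesssim 2^{-kp/(s+\beta p)}\diam(B)$, thereby constraining $r(x_i)$. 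A H\"older-type interpolation balancing this shrinkage of scales against $\sum_i \mu(B_i) \le 2\mu(F)$ (from the density selection) delivers precisely the exponent $\alpha$. The remaining term $\int_F|u-\tilde u|^p\,d\mu = \int_B|u-\tilde u|^p\,d\mu$ (since $u - \tilde u$ vanishes on $W$) is controlled by the $L^p$ stopping bound together with an absorption argument that exploits the identity $\tilde u_B - u_B = -\frac{\mu(F)}{\mu(B)}(u_F - \tilde u_F)$, which produces an absorbable $(\mu(F)/\mu(B))^{p-1}$ factor. The main technical obstacle is carrying out the interpolation in the decisive step so as to reproduce exactly the stated exponent $\alpha$, which requires careful bookkeeping of the doubling exponent $s$ against the H\"older scaling $\beta p$.
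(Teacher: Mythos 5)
Your measure-theoretic skeleton matches the paper's: the critical-radius construction via Lemma \ref{l.continuous}, the $5r$-covering, and the splitting into an ``easy'' term absorbed by $\int_{B\setminus U^{2^k\lambda}}|u-u_{B\setminus U^{2^k\lambda}}|^p\,d\mu$ all appear there (the paper normalizes the density by $\mu(B\cap B(x,r))$ rather than $\mu(B(x,r))$, which together with Lemma \ref{l.ball_measures} removes the boundary cases you wave away). But your treatment of the main term has a genuine gap at precisely the step you call decisive.

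The claimed bound $\sum_i r(x_i)^{\beta p}\mu(B_i)\le C\,2^{-k\alpha}\diam(B)^{\beta p}\mu(U_B^{2^k\lambda})$ is not provable, because $r(x_i)$ is determined by the measure density of the set $F=U_B^{2^k\lambda}$, not by the oscillation of $u$ near $x_i$. The condition $M^{\sharp,p}_{\beta,\mathcal{B}_0}u(x_i)>2^k\lambda$ together with the stopping bound and \eqref{e.radius_measure} does force any witnessing ball \emph{contained in $B$} to have diameter $\lesssim 2^{-kp/(s+\beta p)}\diam(B)$, but it says nothing about the radius at which the density of $F$ drops to $1/2$. For instance, if $u$ oscillates with amplitude $\approx 2^k\lambda\,\delta^\beta$ at a fine scale $\delta\le 2^{-k/\beta}\diam(B)$ throughout a sub-region of $B$ of measure comparable to $\mu(B)/4$, the stopping inequality \eqref{e.loc_stop} is respected, $F$ contains that whole region, and for $x_i$ deep inside it one has $r(x_i)\simeq\diam(B)$; then $\sum_i r(x_i)^{\beta p}\mu(B_i)\gtrsim\diam(B)^{\beta p}\mu(F)$ with no $2^{-k\alpha}$ gain. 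The lemma is still true in this scenario, but only because the \emph{actual} oscillation of $u$ over $5B_i\cap B$ relative to $u_{B_i\cap W}$ is far smaller than the worst-case H\"older bound $\kappa\, r(x_i)^\beta$ you substitute via the McShane extension $\tilde u$. The paper's proof is built exactly around this point: it defines $m$ by $(2^m\lambda)^p\diam(5B')^{\beta p}=\vint_{5B'\cap B}|u-u_{B'_O}|^p\,d\mu$ and splits into the cases $m<k/2$ (the oscillation is already small enough, with no control on $\diam(5B')$ needed) and $m\ge k/2$ (only then does comparison with \eqref{e.loc_stop} and \eqref{e.radius_measure} force $\diam(5B')$ small). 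No pointwise H\"older extension enters, and no bound on the critical radii is ever claimed.

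A secondary gap: the term $\int_F|u-\tilde u|^p\,d\mu$ is not controlled by ``the stopping bound plus absorption.'' The stopping bound gives $\int_B|u-u_B|^p\,d\mu\le C\lambda^p\diam(B)^{\beta p}\mu(B)$, which involves $\mu(B)$ rather than $\mu(F)$ and is not dominated by the right-hand side of \eqref{e.abso} when $\mu(F)\ll\mu(B)$; your identity for the means $\tilde u_B-u_B$ does not produce a factor multiplying this full integral. Any honest estimate of $|u(x)-\tilde u(x)|$ for $x\in F$ routes back through $|u(x)-u(y)|$ for nearby $y\in W$, i.e.\ back to the quantity you are trying to bound, so this term is essentially circular as sketched. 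You would do better to drop $\tilde u$ entirely and work, as the paper does, with the averaged oscillations $\vint_{5B'\cap B}|u-u_{B'_O}|^p\,d\mu$ directly.
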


\begin{proof}
Fix $\lambda>\lambda_Q/2$ and  let $B\in\mathcal{S}_\lambda(Q)$ be such that
$\mu (U_{B}^{2^k\lambda}) < \mu(B)/2$. Fix $x\in U_{B}^{2^k\lambda}\subset B$. Consider
the function $h\colon (0,\infty)\to\R$, 
\[
r\mapsto h(r)= \frac{\mu(U_{B}^{2^k\lambda}\cap B(x,r))}{\mu(B\cap B(x,r))}=\frac{\mu(U_{B}^{2^k\lambda}\cap B(x,r))}{\mu(B(x,r))}\cdot \bigg(\frac{\mu(B\cap B(x,r))}{\mu(B(x,r))}\bigg)^{-1}\,.
\]
By Lemma \ref{l.continuous} and the fact that $B$ is open, we find that $h\colon (0,\infty)\to \R$ is continuous.
Since $h(r)=1$ for small values of $r>0$,
and $h(r)<1/2$ for  $r>\diam(B)$, we find
that $h(r_x)=1/2$ for some $0<r_x\le \diam(B)$.
We write $B'_x=B(x,r_x)$. Then
\begin{equation}\label{e.tok}
\frac{\mu(U_{B}^{2^k\lambda}\cap  B'_x)}{\mu(B\cap B'_x)}=h(r_x)=\frac{1}{2}
\end{equation}
and
\begin{equation}\label{e.ens}
\frac{\mu((B\setminus U^{2^k\lambda})\cap  B'_x)}{\mu(B\cap  B'_x)}
=1-\frac{\mu(U_{B}^{2^k\lambda}\cap  B'_x)}{\mu(B\cap B'_x)}
= 1-h(r_x)=\frac{1}{2}\,.
\end{equation}
Let $\mathcal{G}_\lambda$ be a countable disjoint subfamily of $\{ B'_x\,:\,  x\in U_{B}^{2^k\lambda}\}$
such that $U_{B}^{2^k\lambda}\subset \cup_{B'\in\mathcal{G}_\lambda} 
5B'$.
Then 
\eqref{e.tok} and \eqref{e.ens}  hold for every ball $B'\in
\mathcal{G}_\lambda$; indeed, 
by denoting $B'_I=U_{B}^{2^k\lambda}\cap B'$ and
${B'_O}=(B\setminus U^{2^k\lambda})\cap B'$,
we have the following comparison identities:
\begin{equation}\label{e.comparison}
\mu(B'_I)=  \frac{\mu( B\cap B')}{2}=  
\mu({B'_O})\,,
\end{equation}
where all the measures are strictly positive. These identities
are important and they  are used several times throughout the remainder of this proof.

We multiply the left-hand side of \eqref{e.abso}
by $\diam(B)^{\beta p}$ and then estimate as follows:
\begin{equation}\label{e.prepare}
\begin{split}
\int_{U_{B}^{2^k\lambda}}  &\lvert u-u_{B\setminus U^{2^k\lambda}}\rvert^p\,d\mu
\le\sum_{B'\in\mathcal{G}_\lambda} \int_{5B'\cap B}\lvert u-u_{B\setminus U^{2^k\lambda}}\rvert^p\,d\mu\\
&\le 2^{p-1}\sum_{B'\in\mathcal{G}_\lambda} \mu(5B'\cap B) \lvert u_{{B'_O}}-u_{B\setminus U^{2^k\lambda}}\rvert^p+
2^{p-1}\sum_{B'\in\mathcal{G}_\lambda} \int_{5B'\cap B}\lvert u-u_{{B'_O}}\rvert^p\,d\mu\,.
\end{split}
\end{equation}
By  \eqref{e.doubling} and Lemma \ref{l.ball_measures},
we find that
 $\mu(5B'\cap B)\le \mu(8B') \le c_\mu^6 \mu(B\cap B')$
 if $B'\in\mathcal{G}_\lambda$.
Hence, by the comparison identities \eqref{e.comparison},
 \begin{equation}\label{e.eka}
\begin{split}
2^{p-1}&\sum_{B'\in\mathcal{G}_\lambda} \mu(5B'\cap B)  \lvert u_{{B'_O}}-u_{B\setminus U^{2^k\lambda}}\rvert^p
\le C(p,c_\mu) \sum_{B'\in\mathcal{G}_\lambda} \mu({B'_O})  
\vint_{{B'_O}} \lvert u-u_{B\setminus U^{2^k\lambda}}\rvert^p\,d\mu
\\&=C(p,c_\mu)\sum_{B'\in\mathcal{G}_\lambda}  
\int_{{B'_O}} \lvert u-u_{B\setminus U^{2^k\lambda}}\rvert^p\,d\mu
\le C(p,c_\mu)
\int_{B\setminus U^{2^k\lambda}} \lvert u-u_{B\setminus U^{2^k\lambda}}\rvert^p\,d\mu\,.
\end{split}
\end{equation}
This concludes our analysis of the `easy term' in
\eqref{e.prepare}.
In order to treat the remaining term therein, we do need some preparations.

Let us fix a ball $B'\in\mathcal{G}_\lambda$ that satisfies
$\int_{5B'\cap B} \lvert u-u_{{B'_O}}\rvert^p\,d\mu\not=0$. 
We claim that
\begin{equation}\label{e.out}
\vint_{5B'\cap B}\lvert u-u_{{B'_O}}\rvert^p\,d\mu\le  C(p,c_\mu) 2^{-k\alpha } (2^k\lambda)^p \diam(B)^{\beta p}\,.
\end{equation}
In order to prove this inequality, we fix a number $m\in \R$ such that 
\begin{align*}
(2^m \lambda)^p \diam(5B')^{\beta p}&=\vint_{5B'\cap B}\lvert u-u_{{B'_O}}\rvert^p\,d\mu\,.
\end{align*}
Let us first consider the case $m< k/2$. Then $m-k<-k/2$, and since always $\alpha<p/2$, the desired inequality \eqref{e.out} is obtained in this case as follows:
\begin{align*}
\vint_{5B'\cap B}\lvert u-u_{{B'_O}}\rvert^p\,d\mu
&=2^{(m-k)p} (2^k\lambda)^p\diam(5B')^{\beta p} \\&\le 10^p\, 2^{-kp/2}(2^k\lambda)^p\diam(B)^{\beta p}
\le C(p)  2^{-k\alpha }(2^k\lambda)^p\diam(B)^{\beta p}\,.
\end{align*}

Next we consider the case $k/2\le m$.
By comparison identities \eqref{e.comparison} and Lemma \ref{l.ball_measures},
\begin{align*}
\vint_{5B'\cap B} \lvert u-u_{{B'_O}}\rvert^p\,d\mu &\le  
2^{p-1}\vint_{5B'\cap B} \lvert u-u_{5B'}\rvert^p\,d\mu + 2^{p-1}\lvert 
u_{5B'}-u_{{B'_O}}\rvert^p
\\&\le 2^{p+1}c_\mu^6 \vint_{5B'} \lvert u-u_{5B'}\rvert^p\,d\mu
\le 2^{p+1}c_\mu^6 (2^k\lambda)^p\mathrm{diam}(5B')^{\beta p}\,,
\end{align*}
where the last step follows from condition (W5) and the fact that $5B'\supset {B'_O}\not=\emptyset$.
It follows that 
$2^{mp}\le 2^{p+1} c_\mu^6 2^{kp}$.
On the other hand, we have
\begin{align*}
(2^m \lambda)^p \diam(5B')^{\beta p}\mu(B'\cap B)&\le 
\int_{5B'\cap B} \lvert u-u_{{B'_O}}\rvert^p\,d\mu\\
&\le  2^{p-1}\int_{5B'\cap B} \lvert u-u_B\rvert^p\,d\mu
+ 2^{p-1}\mu(5B'\cap B) \lvert u_{{B'_O}}-u_B\rvert^p\\
&\le 2^{p+1} c_\mu^6 \int_B \lvert u-u_B\rvert^p\,d\mu\le 2 \cdot 64^p c_\mu^{11} \lambda^p \diam(B)^{\beta p}\mu(B)\,,
\end{align*}
where the last step follows from the fact that $B\in\mathcal{S}_\lambda(Q)$ in combination with
inequality \eqref{e.loc_stop}.
In particular, if $s=\log_2 c_\mu$ then by inequality \eqref{e.radius_measure}
and Lemma \ref{l.ball_measures}, we obtain that
\begin{align*}
\bigg(\frac{\diam(5B')}{\diam(B)}\bigg)^{s+\beta p} &\le 20^s \frac{\diam(5B')^{\beta p}\mu(B')}{\diam(B)^{\beta p}\mu(B)}
\le 20^s c_\mu^3\frac{\diam(5B')^{\beta p}\mu(B'\cap B)}{\diam(B)^{\beta p}\mu(B)}\\
&\le  2\cdot 64^p 20^s c_\mu^{14} 2^{-mp}\le 2\cdot 64^p 20^s c_\mu^{14} 2^{-kp/2}\,.
\end{align*}
This, in turn, implies that
\[
\bigg(\frac{\diam(5B')}{\diam(B)}\bigg)^{\beta p} \le 2\cdot 64^p 20^s c_\mu^{14} 2^{\frac{-k\beta p^2}{2(s+\beta p)}}=  C(p,c_\mu) 2^{-k\alpha}\,.
\]
Combining the above estimates, we see that
\[
\vint_{5B'\cap B}\lvert u-u_{{B'_O}}\rvert^p\,d\mu=(2^m \lambda)^p \diam(5B')^{\beta p}\le C(p,c_\mu) 2^{-k\alpha } (2^k\lambda)^p \diam(B)^{\beta p}\,.
\]
That is, inequality \eqref{e.out} holds also in the present case $k/2\le m$.

By using Lemma \ref{l.ball_measures} and inequalities \eqref{e.comparison} and  \eqref{e.out}, we can now estimate the second term in \eqref{e.prepare} as follows:
\begin{align*}
2^{p-1}\sum_{B'\in\mathcal{G}_\lambda} \int_{5B'\cap B}\lvert u-u_{{B'_O}}\rvert^p\,d\mu
&\le 2^p c_\mu^6\sum_{B'\in\mathcal{G}_\lambda} \mu(B'_I) \vint_{5B'\cap B}\lvert u-u_{{B'_O}}\rvert^p\,d\mu\\
&\le  C(p,c_\mu) 2^{-k\alpha } (2^k\lambda)^p \diam(B)^{\beta p}\sum_{B'\in\mathcal{G}_\lambda} \mu(B'_I)  
\\&\le C(p,c_\mu)2^{-k\alpha } (2^k\lambda)^p \diam(B)^{\beta p}\mu(U^{2^k\lambda}_B)\,. 
\end{align*}
Inequality \eqref{e.abso} follows by collecting the above estimates.
\end{proof}

The following lemma is essential for the proof of Theorem \ref{t.main_local}, and it is the only place
in the proof where
the $(p,p)$-Poincar\'e inequality is needed---and, moreover, this inequality
is applied
only a single time.

\begin{lemma}\label{l.mainl_local}
Fix a Whitney ball $Q\in\mathcal{W}_0$. Then
inequality 
\begin{equation}\label{e.id}
\begin{split}
\lambda^p \mu(Q^\lambda)
\le C(\beta,p,c_\mu)\biggl[\frac{(\lambda 2^{k})^p}{2^{k\alpha}} \mu(U^{2^k \lambda}_{Q^*})+   \frac{K_{p,p}}{k^p} \sum_{j=k}^{2k-1}  (\lambda 2^{j})^p \mu(U^{2^j \lambda}_{Q^*})
+ K_{p,p}\int_{U^{\lambda}_{Q^*}\setminus U^{4^k\lambda}} g^p\,d\mu\biggr]
\end{split}
\end{equation}
holds for each $\lambda>\lambda_Q/2$ and every $g\in\mathcal{D}(u)$.
% Here one can take 
%\[
%C_1'=2^p c_\mu^3C_1''\,,\qquad C_2'=2^p c_\mu^3(192 c_\mu^2)^p(1+C_2'')\,,\qquad 
%C_3'=8^p c_\mu^3 (1+C_2'')\,.
%\]
\end{lemma}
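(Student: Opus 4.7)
My plan is to exploit the stopping family $\mathcal{S}_\lambda(Q)$. Its balls are pairwise disjoint with $B\subset U^\lambda_{Q^*}$ and $Q^\lambda\subset\bigcup_{B\in\mathcal{S}_\lambda(Q)}5B$, so doubling reduces matters to bounding $\sum_B\lambda^p\mu(B)$. I split $\mathcal{S}_\lambda(Q)$ by comparing $\mu(U^{2^k\lambda}_B)$ with $\mu(B)/2$. When $\mu(U^{2^k\lambda}_B)\ge \mu(B)/2$, the inequality
\[
\lambda^p\mu(B)\le 2\cdot 2^{-kp}(2^k\lambda)^p\mu(U^{2^k\lambda}_B)\le 2^{1-k\alpha}(2^k\lambda)^p\mu(U^{2^k\lambda}_B)
\]
follows immediately (since $\alpha\le p/2<p$), and summation produces the first term on the right-hand side of \eqref{e.id}. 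In the complementary case $\mu(B\setminus U^{2^k\lambda})\ge\mu(B)/2$, the stopping inequality \eqref{e.loc_stop} combined with the standard $L^p$ best-constant estimate gives $\lambda^p\mu(B)\le C\diam(B)^{-\beta p}\int_B|u-u_{B\setminus U^{2^k\lambda}}|^p\,d\mu$. Splitting this integral over $U^{2^k\lambda}_B$ and its complement in $B$ and invoking Lemma \ref{l.dyadic} peels off the contribution $C2^{-k\alpha}(2^k\lambda)^p\mu(U^{2^k\lambda}_B)$, leaving me to control $\diam(B)^{-\beta p}\int_{B\setminus U^{2^k\lambda}}|u-u_{B\setminus U^{2^k\lambda}}|^p\,d\mu$ by a Poincar\'e-type argument.

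The key construction is an \emph{averaged} auxiliary function built from Lemma \ref{l.arm_local}. For each $j\in\{k,\ldots,2k-1\}$ that lemma provides the H\"older estimate $|u(x)-u(y)|\le C(\beta,c_\mu)2^j\lambda\, d(x,y)^\beta$ on $Q^*\setminus U^{2^j\lambda}$, so the McShane formula \eqref{McShane} produces $\widetilde{u}_j\in\Lip_\beta(X)$ that agrees with $u$ on $Q^*\setminus U^{2^j\lambda}$ and is globally $\beta$-H\"older with constant $C2^j\lambda$. I then form
\[
\widetilde{u}=\frac{1}{k}\sum_{j=k}^{2k-1}\widetilde{u}_j,
\]
which still satisfies $\widetilde{u}=u$ on $Q^*\setminus U^{2^k\lambda}$. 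Applying (D4) to each $\widetilde{u}_j$ with the Borel set $E_j=(X\setminus Q^*)\cup U^{2^j\lambda}$ and constant $C2^j\lambda$ yields $g_{\widetilde{u}_j}=C2^j\lambda\,\mathbf{1}_{E_j}+g\,\mathbf{1}_{X\setminus E_j}\in\mathcal{D}(\widetilde{u}_j)$, after which (D2)--(D3) deliver $g_{\widetilde{u}}=\frac{1}{k}\sum_{j=k}^{2k-1}g_{\widetilde{u}_j}\in\mathcal{D}(\widetilde{u})$.

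The decisive observation, where the $k^{-p}$ gain emerges, concerns the pointwise structure of $g_{\widetilde{u}}$ on $B\subset Q^*$: since the level sets $U^{2^j\lambda}$ are nested, at any $x\in(U^{2^{j^*}\lambda}\setminus U^{2^{j^*+1}\lambda})\cap B$ with $k\le j^*\le 2k-2$ the averaging collapses to $g_{\widetilde{u}}(x)\le C2^{j^*+1}\lambda/k+g(x)$, while on $U^{2^{2k-1}\lambda}\cap B$ every indicator is active and only $g_{\widetilde{u}}(x)\le C2^{2k}\lambda/k$ survives. A dyadic layer-cake integration, absorbing the boundary $2^{2k}\lambda$ piece into the $j=2k-1$ slot, then gives
\[
\int_B g_{\widetilde{u}}^p\,d\mu\le\frac{C}{k^p}\sum_{j=k}^{2k-1}(2^j\lambda)^p\mu(U^{2^j\lambda}_B)+C\int_{B\setminus U^{4^k\lambda}}g^p\,d\mu.
\]
Applying the $(p,p)$-Poincar\'e inequality (D1') to $\widetilde{u}$ on $B$, and then transferring the estimate via $\widetilde{u}=u$ on $B\setminus U^{2^k\lambda}$ together with the $L^p$ best-constant comparison (legitimate because $\mu(B\setminus U^{2^k\lambda})\ge\mu(B)/2$), yields the required bound on $\diam(B)^{-\beta p}\int_{B\setminus U^{2^k\lambda}}|u-u_{B\setminus U^{2^k\lambda}}|^p\,d\mu$. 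Summing the per-ball estimates over the disjoint family $\mathcal{S}_\lambda(Q)\subset U^\lambda_{Q^*}$ and combining with the doubling bound $\mu(Q^\lambda)\le C(c_\mu)\sum_B\mu(B)$ produces \eqref{e.id}. The main obstacle is the design of $\widetilde{u}$: any single McShane extension at a fixed level $j$ would supply only a single-level Poincar\'e bound of the form $K_{p,p}(2^j\lambda)^p\mu(U^{2^j\lambda}_B)$ and miss the $k^{-p}$ factor on which the later absorption in Theorem \ref{t.main_local} relies.
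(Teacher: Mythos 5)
Your proposal is correct and follows essentially the same route as the paper's proof: the same stopping-ball reduction, the same dichotomy on $\mu(U_B^{2^k\lambda})$ versus $\mu(B)/2$ combined with Lemma \ref{l.dyadic}, and the same averaged McShane extension $\frac{1}{k}\sum_{i=k}^{2k-1}u_{2^i\lambda}$ fed through (D2)--(D4) into the $(p,p)$-Poincar\'e inequality, with the identical nestedness/layer-cake estimate producing the $k^{-p}$ gain. (Only a cosmetic point: the best-constant comparison $\int_A|f-f_A|^p\,d\mu\le 2^p\int_A|f-c|^p\,d\mu$ needs no measure-density hypothesis, so the condition $\mu(B\setminus U^{2^k\lambda})\ge\mu(B)/2$ is not what legitimizes that step.)
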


\begin{proof}
Fix $\lambda>\lambda_Q/2$ and $g\in\mathcal{D}(u)$.
By the doubling condition \eqref{e.doubling},
\[\lambda^p \mu(Q^\lambda)
\le \lambda^p \sum_{B\in\mathcal{S}_\lambda(Q)}
\mu(5B) \le c_\mu^3 \sum_{B\in\mathcal{S}_\lambda(Q)} \lambda^p \mu(B)\,.\]
Recall also that $B\subset U^{\lambda}_{Q^*}=U^\lambda\cap Q^*$ if $B\in\mathcal{S}_\lambda(Q)$.
Therefore, and using the fact that $\mathcal{S}_\lambda(Q)$ is a disjoint family, it suffices
to prove that inequality
\begin{equation}\label{e.local}
\begin{split}
\lambda^p \mu(B)
\le C(\beta,p,c_\mu)\biggl[\frac{(\lambda 2^{k})^p}{2^{k\alpha}} \mu(U^{2^k \lambda}_{B})+
\frac{K_{p,p}}{k^p} \sum_{j=k}^{2k-1}  (\lambda 2^{j})^p \mu(U_B^{2^j \lambda})
+K_{p,p}\int_{B\setminus U^{4^k\lambda}} g^p\,d\mu\biggl]
\end{split}
\end{equation}
holds for every $B\in\mathcal{S}_\lambda(Q)$.
To this end, let us fix a ball $B\in\mathcal{S}_\lambda(Q)$.

If $\mu(U_B^{2^k\lambda})\ge \mu(B)/2$, then
\[
\lambda^p \mu(B) \le 2\lambda^p \mu(U_B^{2^k\lambda})
=2\frac{(\lambda 2^k)^p}{2^{kp}}\mu(U_B^{2^k\lambda})
\le 2\frac{(\lambda 2^k)^p}{2^{k \alpha}}\mu(U_B^{2^k\lambda})\,,
\]
which suffices for the required local estimate \eqref{e.local}.
Let us then consider the more difficult case $\mu(U_B^{2^k\lambda}) < \mu(B)/2$.
In this case, by the stopping inequality \eqref{e.loc_stop},
\begin{align*}
\lambda^p\mu(B)&\le \frac{1}{\mathrm{diam}(B)^{\beta p}}\int_{B} \lvert u(x)-u_{B}\rvert^p\,d\mu(x)
\\&\le \frac{2^{p}}{\mathrm{diam}(B)^{\beta p}}\int_{X} \Bigl( \mathbf{1}_{B\setminus U^{2^k\lambda}}(x)+
\mathbf{1}_{U^{2^k\lambda}_{B}}(x)\Bigr)\lvert u(x)-u_{B\setminus U^{2^k\lambda}}\rvert^p\,d\mu(x)\,.
\end{align*}
By Lemma
\ref{l.dyadic} it suffices to estimate 
the integral over the set
%:
$B\setminus U^{2^k\lambda}=B\setminus U^{2^k\lambda}_B$;
observe that the measure of this set is strictly positive.
We remark that the Poincar\'e
inequality condition (D1') will be used to estimate this integral.

Fix a number $i\in\N$.
Recall that $B\subset Q^*$. Hence, it follows from Lemma \ref{l.arm_local} that the restriction $u|_{B\setminus U^{2^i\lambda}}\colon B\setminus U^{2^i\lambda}\to \R$ is a 
$\beta$-H\"older function with a constant $\kappa_i=C(\beta,c_\mu)2^i\lambda$. 
We can now use the McShane extension \eqref{McShane} and extend $u|_{B\setminus U^{2^i\lambda}}$ 
to a function $u_{2^i \lambda}\colon X\to \R$
that is $\beta$-H\"older with the constant $\kappa_i$
and satisfies the restriction identity 
$u_{2^i \lambda}|_{B\setminus U^{2^i \lambda}}=u|_{B\setminus U^{2^i\lambda}}$.

The crucial idea that was also used by Keith--Zhong in \cite{MR2415381} is to consider the function
\[h(x)=\frac{1}{k} \sum_{i=k}^{2k-1} u_{2^i \lambda}(x)\,,\qquad x\in X\,.\]
By
conditions (D2)--(D4) of the fixed $\mathcal{D}$-structure, we obtain that
\[
g_h=\frac{1}{k}\sum_{i=k}^{2k-1} \Bigl( \kappa_i  \mathbf{1}_{U^{2^i\lambda}\cup B^c}
+ g\mathbf{1}_{B\setminus U^{2^i\lambda}}\Bigr)\in\mathcal{D}(h)\,.
\]
Observe that $U^{2^k\lambda}_B\supset U^{2^{(k+1)}\lambda}_B
\supset \dotsb \supset U^{2^{(2k-1)}\lambda}_B\supset U^{4^{k}\lambda}_B$.
By using these inclusions it is straightforward to show that the following
pointwise estimates are valid in $X$,
\begin{equation*}\label{e.hardy}
\begin{split}
\mathbf{1}_Bg_h^p &\le \bigg( \frac{1}{k}\sum_{i=k}^{2k-1} \Bigl(   \kappa_i\,\mathbf{1}_{U_B^{2^i\lambda}}
+  g \mathbf{1}_{B\setminus U^{2^i\lambda}}\Bigr)\bigg)^p\\
&\le 2^{p}\bigg(\frac{1}{k}\sum_{i=k}^{2k-1} \kappa_i\, \mathbf{1}_{U_B^{2^i \lambda}}\bigg)^p
+  2^{p} g^p \mathbf{1}_{B\setminus U^{4^k\lambda}}\\
&\le \frac{C(\beta,p,c_\mu)}{k^p} \sum_{j=k}^{2k-1} \bigg(\sum_{i=k}^j  2^i \lambda\bigg)^p  \mathbf{1}_{U_B^{2^j \lambda}}
+  2^p g^p \mathbf{1}_{B\setminus U^{4^k\lambda}}\\
&\le \frac{C(\beta,p,c_\mu)}{k^p} \sum_{j=k}^{2k-1}  (\lambda 2^{j})^p  \mathbf{1}_{U_B^{2^j \lambda}}
+  2^p g^p \mathbf{1}_{B\setminus U^{4^k\lambda}}\,.
\end{split}
\end{equation*}
Observe that $h$ coincides with $u$ on $B\setminus U^{2^k\lambda}$
and recall that $g_h\in\mathcal{D}(h)$.
Hence
the Poincar\'e inequality from condition (D1')
in Theorem \ref{t.pp_poincare} implies 
that
\begin{align*}
&\frac{1}{\mathrm{diam}(B)^{\beta p}}\int_{B\setminus U^{2^k\lambda}}\lvert u(x)-u_{B\setminus U^{2^k\lambda}}\rvert^p\,d\mu(x)
\le \frac{2^{p}}{\mathrm{diam}(B)^{\beta p}}\int_{B} \lvert h(x)-h_{B}\rvert^p\,d\mu(x)
\\&\le 2^{p}K_{p,p}\int_{B} g_h(x)^p \,d\mu(x)
\\&\le \frac{C(\beta,p,c_\mu)K_{p,p}}{k^p} \sum_{j=k}^{2k-1}  (\lambda 2^{j})^p \mu(U_B^{2^j \lambda})
+4^p K_{p,p}\int_{B\setminus U^{4^k\lambda}} g(x)^p\,d\mu(x)\,.
\end{align*}
The desired local inequality \eqref{e.local} follows by combining the estimates above.
\end{proof}

\subsection{Completing proof of Theorem \ref{t.main_local}}\label{ss.main_local}
Recall that  $u\colon X\to \R$ is a $\beta$-H\"older function and
%$U^\lambda=\{x\in B_0\,:\, M^{\sharp} u(x)>\lambda\}$.
 that $M^{\sharp} u=M^{\sharp,p}_{\beta,\mathcal{B}_0}u$.
Let us fix a function $g\in\mathcal{D}(u)$.
Observe that the left-hand side of 
inequality \eqref{e.loc_des} is finite. Without loss of generality, we may further assume
that it is nonzero.
%; in particular,
%from the latter condition it follows that $M^{\sharp,p}_\beta u(x)>0$ for
%all $x\in X$.
By Lemma \ref{l.big_to_small_ball},
\[
\int_{B_0} \big(M^{\sharp} u(x)\big)^{p-\varepsilon}\,d\mu(x)
\le C(c_\mu,p,\beta)\int_{B_0} \big(M^{\sharp}_{\textup{loc}} u(x)\big)^{p-\varepsilon}\,d\mu(x)\,.
\]
Observe that
\[
\big(M^{\sharp}_{\textup{loc}} u(x)\big)^{p-\varepsilon}\le \sum_{Q\in\mathcal{W}_0} \mathbf{1}_Q(x)\big(M^\sharp_Q u(x)\big)^{p-\varepsilon}
\]
for every $x\in B_0$. Hence,
\begin{align*}
\int_{B_0}  \big(M^{\sharp}_{\textup{loc}} u(x)\big)^{p-\varepsilon}\,d\mu(x)
%&\le \sum_{Q\in\mathcal{W}_0} \int_{Q} \big(M^{\sharp}_{\textup{loc}} u(x)\big)^{p-\varepsilon}\,d\mu(x)
\le \sum_{Q\in\mathcal{W}_0} \int_{Q} \big(M^{\sharp}_{Q} u(x)\big)^{p-\varepsilon}\,d\mu(x)\,.
\end{align*}
At this stage, we fix a ball $Q\in\mathcal{W}_0$ and write  
the corresponding integral as follows:
\begin{align*}
\int_{Q} \big(M^{\sharp}_{Q} u(x)\big)^{p-\varepsilon}\,d\mu(x) =
(p-\varepsilon)\int_0^\infty \lambda^{p-\varepsilon} \mu(Q^\lambda)\,\frac{d\lambda}{\lambda}\,.
\end{align*}
Since $Q^\lambda=Q=Q^{2\lambda}$ for every $\lambda \in (0,\lambda_Q/2)$, 
we find that
\begin{align*}
(p-\varepsilon)\int_0^{\lambda_Q/2} \lambda^{p-\varepsilon} \mu(Q^\lambda)\,\frac{d\lambda}{\lambda}&=\frac{(p-\varepsilon)}{2^{p-\varepsilon}}\int_0^{\lambda_Q/2} (2\lambda)^{p-\varepsilon} \mu(Q^{2\lambda})\,\frac{d\lambda}{\lambda}\\
&\le \frac{(p-\varepsilon)}{2^{p-\varepsilon}}\int_0^{\infty} \sigma^{p-\varepsilon} \mu(Q^{\sigma})\,\frac{d\sigma}{\sigma}
\\&=\frac{1}{2^{p-\varepsilon}}\int_Q \big(M^{\sharp}_{Q} u(x)\big)^{p-\varepsilon}\,d\mu(x)\,.
\end{align*}
On the other hand, by Lemma \ref{l.mainl_local}, for each $\lambda>\lambda_Q/2$,
\begin{align*}
\lambda^{p-\varepsilon} \mu(Q^\lambda)
\le C(\beta,p,c_\mu)\lambda^{-\varepsilon}\biggl[\frac{(\lambda 2^{k})^p}{2^{k\alpha}} \mu(U^{2^k \lambda}_{Q^*})+ \frac{K_{p,p}}{k^p} \sum_{j=k}^{2k-1}  (\lambda 2^{j})^p \mu(U^{2^j \lambda}_{Q^*})
+K_{p,p}\int_{U^{\lambda}_{Q^*}\setminus U^{4^k\lambda}} g^p\,d\mu\,\biggr].
\end{align*}
Since $p-\varepsilon>1$, it follows that
\begin{align*}
\int_Q \big( M^{\sharp}_{Q} u(x)\big)^{p-\varepsilon}\,d\mu(x)
&\le 2(p-\varepsilon)\int_{\lambda_Q/2}^\infty \lambda^{p-\varepsilon} \mu(Q^\lambda)\,\frac{d\lambda}{\lambda}
\\&\le C(\beta,p,c_\mu)(I_1(Q) + I_2(Q) + I_3(Q))\,,
\end{align*}
where
\begin{align*}
I_1(Q)&=\frac{2^{k\varepsilon}}{2^{k\alpha}}\int_{0}^\infty (\lambda 2^{k})^{p-\varepsilon} \mu(U^{2^k \lambda}_{Q^*})  \,\frac{d\lambda}{\lambda}\,,\qquad \\
I_2(Q)&= \frac{K_{p,p}}{k^p}\sum_{j=k}^{2k-1} 2^{j\varepsilon}\int_0^\infty (2^j \lambda)^{p-\varepsilon} \mu(U^{2^j \lambda}_{Q^*})\,\frac{d\lambda}{\lambda}\,, \\
I_3(Q)&= K_{p,p}
\int_0^\infty \lambda^{-\varepsilon} \int_{U^{\lambda}_{Q^*}\setminus U^{4^k\lambda}} g(x)^p\,d\mu(x)\,\frac{d\lambda}{\lambda}\,.
\end{align*}
By (W2) we have $\sum_{Q\in\mathcal{W}_0} \mathbf{1}_{Q^*}\le C(c_\mu)\mathbf{1}_{B_0}$. Hence, we can now continue to estimate as follows. First,
\begin{align*}
\sum_{Q\in\mathcal{W}_0} I_1(Q)
&\le C(c_\mu)\frac{2^{k(\varepsilon-\alpha)}}{p-\varepsilon}\int_{B_0} \big(M^{\sharp} u(x)\big)^{p-\varepsilon}\,d\mu(x)
\\
&\le C(c_\mu) 2^{k(\varepsilon-\alpha)}\int_{B_0} \big(M^{\sharp} u(x)\big)^{p-\varepsilon}\,d\mu(x)\,.
\end{align*}
Second, 
\begin{align*}
\sum_{Q\in\mathcal{W}_0} I_2(Q)
&\le C(c_\mu)\frac{K_{p,p}}{k^p}\sum_{j=k}^{2k-1}  2^{j\varepsilon}\int_0^\infty (2^j \lambda)^{p-\varepsilon} \mu(U^{2^j \lambda})\,\frac{d\lambda}{\lambda}\\
&\le \frac{C(c_\mu)K_{p,p}}{k^p(p-\varepsilon)}\bigg(\sum_{j=k}^{2k-1} 2^{j\varepsilon}\bigg)\int_{B_0} \big(M^{\sharp} u(x)\big)^{p-\varepsilon}\,d\mu
\\
&\le C(c_\mu) \frac{K_{p,p}4^{k\varepsilon}}{k^{p-1}}\int_{B_0} \big(M^{\sharp} u(x)\big)^{p-\varepsilon}\,d\mu\,.
\end{align*}
Third, by Fubini's theorem,
\begin{align*}
\sum_{Q\in\mathcal{W}_0} I_3(Q)&\le C(c_\mu)K_{p,p}\int_{B_0\setminus \{M^\sharp u=0\}} \bigg(   \int_0^\infty \lambda^{-\varepsilon}  \mathbf{1}_{U^{\lambda}\setminus U^{4^k\lambda}}(x)  \frac{d\lambda}{\lambda}  \bigg)g(x)^p\,d\mu(x)\\
& \le  C(c_\mu)C(k,\varepsilon) K_{p,p}\int_{B_0\setminus \{M^\sharp u=0\}}g(x)^p (M^{\sharp} u(x))^{-\varepsilon}\,d\mu(x)\,.\end{align*}
Combining the estimates above, we arrive at the desired conclusion.
\qed

\section{Keith--Zhong theorems}\label{s.Keith_Zhong}

We consider the self-improvement
properties of Poincar\'e inequalities involving {\em $p$-weak upper gradients}; in particular, we recover
the so-called Keith--Zhong Theorem \cite{MR2415381};
see Theorem \ref{t.kz}.
Moreover, we obtain a partial version of this result for
$\mathcal{D}$-structures in Theorem \ref{t.kz_D}.

\begin{definition}
Fix an exponent $1<p<\infty$.
A measurable function $g\colon X\to [0,\infty]$ is a {\em $p$-weak upper gradient (with respect to $X$)}
of a function $u\colon X\to \R$ 
if inequality
\begin{equation}\label{e.modulus}
\lvert u(\gamma(0))-u(\gamma(\ell_\gamma))\rvert \le \int_\gamma g\,ds
\end{equation}
holds for $p$-almost every curve $\gamma:[0,\ell_\gamma]\to X$; i.e., there exists a non-negative Borel function $\rho\in L^p(X)$ such that 
$\int_\gamma \rho\,ds=\infty$ whenever inequality~\eqref{e.modulus} does not hold or is not defined.
\end{definition}

We refer to~\cite{MR2867756,MR1800917,MR3363168} for further information
on $p$-weak upper gradients.

\medskip

Fix an exponent $1<p<\infty$. For each function $u\in \mathrm{Lip}(X)$, we let $\mathcal{D}^{1,p}_N(u)$ be the family of
all  $p$-weak upper gradients $g\in L^p_{\textup{loc}}(X)$ of $u$; by $g\in L^p_{\textup{loc}}(X)$ we mean that
for each $x\in X$ there exists $r_x>0$ such that $g\in L^p(B(x,r_x))$.
The properties (D2) and (D3) in Definition \ref{d.D_structure} are rather well known, see for instance \cite[Corollary 1.39]{MR2867756}. 
The property (D4) with $\beta=1$ is a consequence of a so-called `Glueing lemma', we refer to
 \cite[Lemma 2.19, Remark 2.28]{MR2867756}.
However, the $(1,p)$-Poincar\'e inequality condition (D1), with $\beta=1$, is not always valid,
 and therefore we need to assume this in some of the forthcoming results.

Beyond these properties (D1)--(D4), we also need some other  observations.
The above family $\mathcal{D}^{1,p}_N(u)$ has the
following minimality property:
if  $u\in \mathrm{Lip}(X)$, then there exists
a $p$-weak upper gradient
$g_u\in \mathcal{D}^{1,p}_N(u)$ such that $g_u\le g$ almost
everywhere if $g\in \mathcal{D}^{1,p}_N(u)$; see~\cite[Theorem 2.25]{MR2867756}.
Moreover, this {\em minimal $p$-weak upper gradient} $g_u$ is unique up to sets of measure zero in $X$.
The following result is an adaptation of \cite[Lemma 4.7]{MR1809341}; see also \cite{MR3263465}.
The proof below relies on
a localization property  of the minimal $p$-weak upper gradient to open sets (e.g.\
to balls $B_0$ in $X$). 
At this stage, the reader is encouraged to recall Definition \eqref{d.m_def}.

\begin{lemma}\label{l.Lip_estimate}
Let $1<p<\infty$. Let $B_0\subset X$ be a ball and $\mathcal{B}_0=\{B(x,r)\,:\,B(x,2r)\subset B_0\}$.
Suppose that  $u\colon X\to \R$ is a Lipschitz function and 
let $g_u\in L^p_{\textup{loc}}(X)$ be its minimal $p$-weak upper gradient.
Then inequality
\begin{equation}\label{e.desired}
g_u(x)\le C(c_\mu) M^{\sharp,p}_{1,\mathcal{B}_0}u(x)
\end{equation}
holds for almost every $x\in B_0$.
\end{lemma}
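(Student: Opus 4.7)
The strategy is to derive a pointwise Hajlasz-type inequality for $u$ by specializing the telescoping argument in the proof of Lemma~\ref{l.arm_local} to $\beta = 1$, and then to translate it into a pointwise bound on the minimal $p$-weak upper gradient $g_u$ by invoking the standard identification of $g_u$ with (a measure-theoretic variant of) the pointwise Lipschitz constant of a locally Lipschitz function.

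First, I would revisit the intermediate telescoping estimate used to prove Lemma~\ref{l.arm_local}. The chain of balls $B_i = B(z, 2^{-i} r)$ belongs to $\mathcal{B}_0$ whenever $z \in B_0$ and $B(z, 2r) \subset B_0$, and applying the same chaining estimate (without restricting to the complement of a super-level set) gives
\[
|u(z) - u_{B(z, r)}| \le C(c_\mu)\, r\, M^{\sharp, p}_{1, \mathcal{B}_0} u(z).
\]
Running this estimate between two nearby points $x, y \in B_0$ via the intermediate ball $B(x, d(x, y))$, exactly as in the last part of the proof of Lemma~\ref{l.arm_local}, yields the Hajlasz-type inequality
\[
|u(x) - u(y)| \le C(c_\mu)\, d(x, y)\, \bigl[M^{\sharp, p}_{1, \mathcal{B}_0} u(x) + M^{\sharp, p}_{1, \mathcal{B}_0} u(y)\bigr]
\]
for all $x, y \in B_0$ with $d(x, y)$ small enough that the balls involved remain in $B_0$.

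Second, since $u$ is globally Lipschitz, a direct estimate shows $M^{\sharp, p}_{1, \mathcal{B}_0} u \le C\,\Lip(u)$ on $B_0$, so this sharp maximal function is bounded and in particular locally integrable. At almost every $x \in B_0$, Lebesgue differentiation then ensures that for each $\varepsilon > 0$ one has $M^{\sharp, p}_{1, \mathcal{B}_0} u(y) \le (1 + \varepsilon) M^{\sharp, p}_{1, \mathcal{B}_0} u(x)$ on a set of density $1$ at $x$. Plugging this into the Hajlasz-type inequality above yields a measure-theoretic pointwise Lipschitz estimate for $u$ at such $x$, with constant $C(c_\mu)\, M^{\sharp, p}_{1, \mathcal{B}_0} u(x)$. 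I would then invoke the fact that, for locally Lipschitz functions on $X$, the minimal $p$-weak upper gradient is dominated almost everywhere by the pointwise upper Lipschitz constant (or by its measure-theoretic variant), which is \cite[Lemma~4.7]{MR1809341} and is handled in the present generality in \cite{MR3263465}. Combining these ingredients produces $g_u(x) \le C(c_\mu)\, M^{\sharp, p}_{1, \mathcal{B}_0} u(x)$ for almost every $x \in B_0$.

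The main obstacle is passing from an approximate (density-$1$) pointwise Lipschitz estimate to an honest pointwise bound on the minimal $p$-weak upper gradient. Since $M^{\sharp, p}_{1, \mathcal{B}_0} u$ is only lower semicontinuous, a naive passage through the classical pointwise upper Lipschitz constant fails (the $\limsup_{y\to x} M^{\sharp, p}_{1, \mathcal{B}_0} u(y)$ on the right-hand side of the Hajlasz inequality is not controlled by $M^{\sharp, p}_{1, \mathcal{B}_0} u(x)$ pointwise), and one has to rely on the measure-theoretic identification of $g_u$ with an almost-everywhere version of the pointwise Lipschitz constant, as provided by the cited references.
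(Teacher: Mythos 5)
Your first step --- the telescoping argument giving $\lvert u(x)-u(y)\rvert \le C(c_\mu)\, d(x,y)\big(M^{\sharp,p}_{1,\mathcal{B}_0}u(x)+M^{\sharp,p}_{1,\mathcal{B}_0}u(y)\big)$ for every pair $x,y$ lying in a common dilated Whitney ball of $B_0$ --- is exactly the paper's starting point. The gap is in the conversion of this two-point estimate into $g_u\le C M^{\sharp,p}_{1,\mathcal{B}_0}u$ a.e. You correctly identify the obstacle (lower semicontinuity of the maximal function defeats the naive bound on $\operatorname{Lip}u(x)=\limsup_{y\to x}\lvert u(y)-u(x)\rvert/d(y,x)$), but the proposed fix --- Lebesgue points of $M^{\sharp,p}_{1,\mathcal{B}_0}u$ together with a ``measure-theoretic identification of $g_u$ with an almost-everywhere version of the pointwise Lipschitz constant'' --- is not what the cited references provide, and it is not a valid fact in this generality. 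What is true is that $\operatorname{Lip}u$, the honest $\limsup$ over \emph{all} nearby points, is an upper gradient of a locally Lipschitz function, whence $g_u\le\operatorname{Lip}u$ a.e.; there is no analogue in which one only tests $y$ in a set of density $1$ at $x$, because the upper gradient inequality must be verified along curves, and a density-$1$ condition at each point gives no control of $u$ along any particular curve (which may be a null set). As written, the chain of implications does not close.

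The paper (adapting [Lemma 4.7, MR1809341]) closes it by a direct curve argument that exploits precisely the feature your route discards: the two-point inequality holds for \emph{every} admissible pair, not merely almost every pair, since $u$ is continuous and $M^{\sharp,p}_{1,\mathcal{B}_0}u$ is defined everywhere. Given a curve $\gamma$ in $B_0$, one splits it into $n$ equal subarcs $\gamma_i$ with $n$ so large that consecutive subarcs lie in a common $Q_i^*$, picks on each $\gamma_i$ a point $x_i$ with $g(x_i)\le \vint_{\gamma_i}g\,ds$ where $g=C(c_\mu)M^{\sharp,p}_{1,\mathcal{B}_0}u$, applies the two-point inequality to consecutive $x_i$, sums, and lets $n\to\infty$. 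This shows that $4g|_{B_0}$ is a genuine upper gradient of $u|_{B_0}$ in $B_0$, and the localization property of minimal $p$-weak upper gradients to open sets [Lemma 2.23, MR2867756] then yields \eqref{e.desired}. Alternatively, you could salvage your plan by noting that $CM^{\sharp,p}_{1,\mathcal{B}_0}u$ is a (local) Haj\l asz gradient of $u$ and invoking the theorem that Haj\l asz gradients are, up to a constant, $p$-weak upper gradients [MR3263465]; but that is a different statement from the pointwise-Lipschitz identification you describe, and its proof is essentially the curve-subdivision argument above rather than a Lebesgue-point argument.
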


\begin{proof}
In the proof, we only consider the difficult case $B_0\not=X$;
the  case $B_0=X$ is similar.
Let $u\colon X\to \R$ be Lipschitz, with a constant $\kappa>0$. 
Write $g=C(1,c_\mu) M^{\sharp,p}_{1,\mathcal{B}_0}u$, where
the constant $C(1,c_\mu)>0$ is as in the proof of Lemma \ref{l.arm_local}.
First we  show that $4g|_{B_0}$ is a $p$-weak upper gradient of $u|_{B_0}$ with respect to $B_0$.

To begin with, we observe that $\{y\in B_0\,:\, g (y)>\lambda\}$
is an open set if $\lambda\in \R$. Hence, the
function $g|_{B_0}$ is Borel in $B_0$.
Fix a curve $\gamma\colon [0,\ell_\gamma]\to B_0\subset X$, and then fix a natural number $n\ge 2$ satisfying condition
\begin{equation}\label{e.choice}
n> 2\ell_\gamma\bigg(\frac{\dist(\gamma[0,\ell_\gamma],X\setminus B_0)}{128}\bigg)^{-1}>0\,.
\end{equation}
At the end, we will let $n$ tend to infinity.
We consider the covering $[0,\ell_\gamma]=\cup_{i=0}^{n-1} [t_i,t_{i+1}]$,
where each $t_j=j\ell_\gamma/n$. Write $\gamma_i=\gamma|_{[t_i,t_{i+1}]}$ and
$\lvert \gamma_i\rvert=\gamma[t_i,t_{i+1}]\subset B_0$ for each $i=0,\ldots,n-1$.
For each such $i$ we pick $x_i=x_i(n)\in \lvert \gamma_i\rvert$ such that
\[
g(x_i)\le \vint_{\gamma_i} g\, ds\,.
\]
Consider a fixed $i=0,\ldots,n-2$.
Note first that 
\begin{equation}\label{e.dist_i}
d(x_i,x_{i+1})\le \ell(\gamma_i)+\ell(\gamma_{i+1})=2\ell_\gamma/n=2\ell(\gamma_i)=2\ell(\gamma_{i+1})\,.
\end{equation}
Moreover, since $B_0\subsetneq X$, we can
choose a Whitney ball $Q_i\in \mathcal{W}(B_0)$
such that $x_i\in Q_i$; we refer to \S\ref{ss.Whitney}. By
using inequalities \eqref{e.choice} and \eqref{e.dist_i},  it is straightforward
to show that $x_i,x_{i+1}\in Q_i^*$. Hence, proceeding as in the proof of Lemma \ref{l.arm_local},
we see that
\[
\lvert u(x_{i})-u(x_{i+1})\rvert
\le d(x_i,x_{i+1})\big( g(x_i)+ g(x_{i+1})\big)\,.
\]
Thus, we obtain that
\begin{align*}
\lvert u(x_0)-u(x_{n-1})\rvert &\le \sum_{i=0}^{n-2} \lvert u(x_i)-u(x_{i+1})\rvert\\
&\le \sum_{i=0}^{n-2} d(x_i,x_{i+1}) \big( g(x_i)+g(x_{i+1})\big)\\
&\le 4\sum_{i=0}^{n-1} \ell(\gamma_i)\vint_{\gamma_i} g\,ds
=  4\int_\gamma g\,ds\,.
\end{align*}
By taking $n\to \infty$ and using the continuity of $u$, 
together with the facts that $x_0(n)\to \gamma(0)$ and $x_{n-1}(n)\to \gamma(\ell_\gamma)$, 
we conclude that
\[\lvert u(\gamma(0))-u(\gamma(\ell_\gamma))\rvert \le \int_\gamma 4 g \,ds\,.\]
This inequality shows  
that $4g|_{B_0}$ is a $p$-weak upper gradient of
$u|_{B_0}$ with respect to $B_0$, and
since $0\le 4g\le 4C(1,c_\mu) \kappa$, it also holds
that $4g|_{B_0}\in L^p(B_0)$. Inequality \eqref{e.desired} 
for $C(c_\mu)=4C(1,c_\mu)$
now follows from the fact that the restriction $g_u|_{B_0}$ is the minimal $p$-weak upper
gradient of $u|_{B_0}$ with respect to the (open) ball $B_0$; we refer to \cite[Lemma 2.23]{MR2867756}.
\end{proof}

The following result 
is now a consequence of our main result, Theorem \ref{t.main_local}.
This result can be further strenghtened by using Theorem \ref{t.qp_poincare},
but we leave details to the interested reader.

\begin{theorem}\label{t.kz}
Suppose that $X$ is a geodesic space and fix $1<p<\infty$.
Suppose that there are constants $K>0$ and $\tau \ge 1$ such that
the $(1,p)$-Poincar\'e inequality
\[
\vint_{B} \lvert u(x)-u_{B}\rvert\,d\mu(x)
\le K^{1/p}\mathrm{diam}(B)\bigg(\vint_{\tau B} g(x)^p \,d\mu(x)\bigg)^{1/p}
\]
holds whenever $B$ is a ball in $X$ and $g\in L^p_{\mathrm{loc}}(X)$ is a $p$-weak upper gradient of $u\in \mathrm{Lip}(X)$. 
Then
there exists a number $0<\varepsilon<p-1$ and a constant $C>0$, both of which are quantitative, such that inequality
\[
\bigg(\vint_B \lvert u(x)-u_B\rvert^p\,d\mu(x)\bigg)^{1/p}\le 
C\diam(B)\bigg(\vint_{2B} g(x)^{p-\varepsilon}\,d\mu(x)\bigg)^{1/(p-\varepsilon)}
\]
holds whenever $B\subset X$ is a ball, $u\in\mathrm{Lip}(X)$  and $g\in L^p_{\textup{loc}}(X)$ is a $p$-weak upper gradient of $u$.
\end{theorem}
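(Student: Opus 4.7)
The plan is to realize this as a direct application of Corollary~\ref{c.main} to the $\mathcal{D}$-structure $\mathcal{D}^{1,p}_N$ introduced just above the theorem, followed by an unweighting step based on Lemma~\ref{l.Lip_estimate}. As a preliminary step I would verify that $\{\mathcal{D}^{1,p}_N(u):u\in\mathrm{Lip}(X)\}$ is a $\mathcal{D}$-structure with exponent $\beta=1$: conditions (D2), (D3) and (D4) are precisely the standard properties of $p$-weak upper gradients already recorded in the discussion preceding the theorem, and condition (D1) is the standing hypothesis of Theorem~\ref{t.kz}.

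Fix a ball $B\subset X$ and let $g_u$ denote the minimal $p$-weak upper gradient of $u$, which lies in $\mathcal{D}^{1,p}_N(u)$ since $u$ is Lipschitz. Applying Corollary~\ref{c.main} with localizing ball $B_0=2B$ (so that $\mathcal{B}_0=\{B'\subset X\,:\,2B'\subset 2B\}$) and with $g_u$ in the role of the auxiliary function yields, for some universal $0<\varepsilon<\varepsilon_0<p-1$,
\[
\int_{2B}\bigl(M^{\sharp,p}_{1,\mathcal{B}_0}u\bigr)^{p-\varepsilon}\,d\mu
\le C\int_{(2B)\setminus\{M^{\sharp,p}_{1,\mathcal{B}_0}u=0\}}g_u^{p}\,\bigl(M^{\sharp,p}_{1,\mathcal{B}_0}u\bigr)^{-\varepsilon}\,d\mu.
\]

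To eliminate the weight on the right-hand side I would invoke Lemma~\ref{l.Lip_estimate}, which gives $g_u\le C(c_\mu)M^{\sharp,p}_{1,\mathcal{B}_0}u$ almost everywhere in $2B$. On the subset of $\{M^{\sharp,p}_{1,\mathcal{B}_0}u>0\}$ where $g_u>0$ this implies $\bigl(M^{\sharp,p}_{1,\mathcal{B}_0}u\bigr)^{-\varepsilon}\le C(c_\mu)^{\varepsilon}g_u^{-\varepsilon}$, hence $g_u^{p}\bigl(M^{\sharp,p}_{1,\mathcal{B}_0}u\bigr)^{-\varepsilon}\le C(c_\mu)^{\varepsilon}g_u^{p-\varepsilon}$; on the complementary set where $g_u=0$ the integrand simply vanishes. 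Because any $p$-weak upper gradient $g\in L^p_{\mathrm{loc}}(X)$ of $u$ satisfies $g\ge g_u$ almost everywhere by minimality, the combined bound becomes
\[
\int_{2B}\bigl(M^{\sharp,p}_{1,\mathcal{B}_0}u\bigr)^{p-\varepsilon}\,d\mu\le C\int_{2B}g^{p-\varepsilon}\,d\mu.
\]

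Finally, since $2B\subset 2B=B_0$ we have $B\in\mathcal{B}_0$, so $B$ itself is an admissible competitor in the definition of $M^{\sharp,p}_{1,\mathcal{B}_0}u(x)$ for every $x\in B$, giving the pointwise lower bound
\[
\Bigl(\frac{1}{\diam(B)^{p}}\vint_{B}\lvert u-u_B\rvert^{p}\,d\mu\Bigr)^{1/p}\le M^{\sharp,p}_{1,\mathcal{B}_0}u(x),\qquad x\in B.
\]
Raising to the power $p-\varepsilon$, integrating over $B\subset 2B$, combining with the previous display, and using the doubling bound $\mu(2B)\le c_\mu\mu(B)$, I obtain
\[
\Bigl(\vint_{B}\lvert u-u_B\rvert^{p}\,d\mu\Bigr)^{(p-\varepsilon)/p}\le C\,\diam(B)^{p-\varepsilon}\vint_{2B}g^{p-\varepsilon}\,d\mu,
\]
and raising to the power $1/(p-\varepsilon)$ delivers the claim. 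I expect the main subtlety to lie in the third step, namely the clean exchange of the $M^{\sharp}$-weight for a $g_u$-weight: one must handle the null set $\{g_u=0\}\cap\{M^{\sharp,p}_{1,\mathcal{B}_0}u>0\}$ carefully, and it is precisely the factor $g_u^{p}$ in the weighted integrand produced by Corollary~\ref{c.main} that makes this swap legitimate.
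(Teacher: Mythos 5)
Your proposal is correct and follows essentially the same route as the paper: verifying that $\mathcal{D}^{1,p}_N$ is a $\mathcal{D}$-structure, applying Corollary~\ref{c.main} with $B_0=2B$ and the minimal $p$-weak upper gradient, using Lemma~\ref{l.Lip_estimate} to trade the weight $\big(M^{\sharp,p}_{1,\mathcal{B}_0}u\big)^{-\varepsilon}$ for $g_u^{-\varepsilon}$, and finishing with minimality and the observation that $B\in\mathcal{B}_0$. No gaps.
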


\begin{proof}
From the above considerations and the standing assumptions, it follows that 
the family $\{\mathcal{D}^{1,p}_N(u)\,:\,u\in\mathrm{Lip}(X)\}$
is a $\mathcal{D}$-structure in $X$, with exponents
$p$ and $\beta=1$, and constants $K>0$ and $\tau\ge 1$.
This allows us to fix $0<\varepsilon<\varepsilon_0$ as in
Corollary \ref{c.main}.
Fix also a ball $B\subset X$, and write $B_0=2B$ and
$\mathcal{B}_0=\{B(x,r)\,:\,B(x,2r)\subset B_0\}$.
Since $B\in\mathcal{B}_0$, we have
\[
\mu(B)\bigg(\frac{1}{\diam(B)^p}\vint_B \lvert u(x)-u_B\rvert^p\,d\mu(x)\bigg)^{(p-\varepsilon)/p}
\le \int_{B_0} \big(M^{\sharp,p}_{1,\mathcal{B}_0} u\big)^{p-\varepsilon}\,d\mu\,.
\]
We  apply Corollary \ref{c.main} with the ball $B_0\subset X$ and
with the minimal $p$-weak upper gradient $g_u\in L^p_{\textup{loc}}(X)$ of $u$.
Lemma \ref{l.Lip_estimate} is needed to obtain the estimate
\[\int_{B_0\setminus \{M^{\sharp,p}_{1,\mathcal{B}_0} u=0\}} g_u^p\big( M^{\sharp,p}_{1,\mathcal{B}_0} u\big)^{-\varepsilon}\,d\mu
\le C(c_\mu,\varepsilon)
\int_{B_0} g_u^{p-\varepsilon}\,d\mu
\]
for the right-hand side of \eqref{e.loc_des_C}. At the end we use the fact that $g_u^{p-\varepsilon}\le g^{p-\varepsilon}$ almost everywhere if $g\in L^p_{\textup{loc}}(X)$ is
any $p$-weak upper gradient of $u$.
\end{proof} 

Along the same lines, we can also prove a 
version of the Keith--Zhong theorem for general $\mathcal{D}$-structures
in geodesic spaces.
This result is stated in Theorem \ref{t.kz_D} below.
This result is a true generalization of Theorem \ref{t.kz} 
but it is unknown to the authors  whether the additional minimality
condition in the statement below can be removed.

\begin{theorem}\label{t.kz_D}
Suppose that we are given a $\mathcal{D}$-structure
in a geodesic space $X$, with exponents 
$1< p<\infty$ and $0<\beta \le 1$, and constants $K>0$ and $\tau \ge 1$.
Fix  $\eta >0$.
Then
there exists a number $0<\varepsilon<p-1$ 
and a constant $C>0$, both of which are quantitative, such that inequality
\[
\bigg(\vint_B \lvert u(x)-u_B\rvert^p\,d\mu(x)\bigg)^{1/p}\le 
C\diam(B)^{\beta}\bigg(\vint_{2B} g(x)^{p-\varepsilon}\,d\mu(x)\bigg)^{1/(p-\varepsilon)}
\]
holds whenever $B\subset X$ is a ball, $u\in\mathrm{Lip}_\beta(X)$ and $g\in \mathcal{D}(u)$
satisfies the following minimality condition: $g \le \eta M^{\sharp,p}_{\beta,\mathcal{B}_0} u$
almost everywhere in $B_0=2B$.
\end{theorem}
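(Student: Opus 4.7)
\textbf{Proof proposal for Theorem \ref{t.kz_D}.}
The plan is to imitate the argument for Theorem \ref{t.kz}, but to replace the use of Lemma \ref{l.Lip_estimate} (which converted $g_u$ into the sharp maximal function in the upper-gradient setting) by the hypothesised minimality condition $g\le \eta M^{\sharp,p}_{\beta,\mathcal{B}_0} u$, which plays exactly the same role in an abstract $\mathcal{D}$-structure.

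First, I would use Corollary \ref{c.main} to fix a number $0<\varepsilon<\varepsilon_0$ (with $\varepsilon_0$ depending only on the $\mathcal{D}$-structure parameters) and a constant $C$ so that the self-improved sharp maximal inequality \eqref{e.loc_des_C} holds for any ball in $X$. Given a ball $B\subset X$, set $B_0=2B$ and $\mathcal{B}_0=\{B(x,r):B(x,2r)\subset B_0\}$; observe that $B\in\mathcal{B}_0$. For any admissible pair $(u,g)$ as in the statement, Corollary \ref{c.main} gives
\[
\int_{B_0}\bigl(M^{\sharp,p}_{\beta,\mathcal{B}_0}u\bigr)^{p-\varepsilon}\,d\mu
\le C\int_{B_0\setminus\{M^{\sharp,p}_{\beta,\mathcal{B}_0}u=0\}}g^p\bigl(M^{\sharp,p}_{\beta,\mathcal{B}_0}u\bigr)^{-\varepsilon}\,d\mu.
\]
On the integration set $M^{\sharp,p}_{\beta,\mathcal{B}_0}u>0$, the minimality hypothesis gives $(M^{\sharp,p}_{\beta,\mathcal{B}_0}u)^{-\varepsilon}\le \eta^{\varepsilon}g^{-\varepsilon}$ wherever $g>0$, while the integrand vanishes where $g=0$. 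Thus the right-hand side is bounded pointwise by $\eta^{\varepsilon}g^{p-\varepsilon}$, and so
\[
\int_{B_0}\bigl(M^{\sharp,p}_{\beta,\mathcal{B}_0}u\bigr)^{p-\varepsilon}\,d\mu
\le C\eta^{\varepsilon}\int_{B_0}g^{p-\varepsilon}\,d\mu.
\]

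To finish, I would bound the left-hand side from below by the Poincaré-type quantity on $B$. Since $B\in\mathcal{B}_0$, the definition \eqref{d.m_def} yields
\[
\bigl(M^{\sharp,p}_{\beta,\mathcal{B}_0}u(x)\bigr)^{p-\varepsilon}\ge \operatorname{diam}(B)^{-\beta(p-\varepsilon)}\bigg(\vint_{B}\lvert u-u_{B}\rvert^{p}\,d\mu\bigg)^{(p-\varepsilon)/p}
\]
for every $x\in B$; integrating this over $B$ and using the doubling property $\mu(B_0)\le c_\mu \mu(B)$ produces
\[
\operatorname{diam}(B)^{-\beta(p-\varepsilon)}\bigg(\vint_{B}\lvert u-u_{B}\rvert^{p}\,d\mu\bigg)^{(p-\varepsilon)/p}
\le C\eta^{\varepsilon}\vint_{B_0}g^{p-\varepsilon}\,d\mu.
\]
Raising this to power $1/(p-\varepsilon)$ and rearranging yields the claimed $(p,p-\varepsilon)$ inequality on $B=\tfrac12 B_0$.

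The one delicate point, and the only place where the minimality hypothesis is genuinely needed, is the passage from $g^p(M^{\sharp,p}_{\beta,\mathcal{B}_0}u)^{-\varepsilon}$ to $g^{p-\varepsilon}$. Without an a priori pointwise domination of $g$ by a multiple of the sharp maximal function, there seems to be no general mechanism to absorb the factor $(M^{\sharp,p}_{\beta,\mathcal{B}_0}u)^{-\varepsilon}$; this is precisely why in the upper-gradient case of Theorem \ref{t.kz} one invokes Lemma \ref{l.Lip_estimate}, and why the authors comment that removing the minimality assumption is an open problem in the general $\mathcal{D}$-structure setting. Once that step is granted, everything else is doubling, Hölder's inequality, and routine bookkeeping.
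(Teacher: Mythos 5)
Your proposal is correct and follows exactly the route the paper intends: the paper gives no separate proof of Theorem \ref{t.kz_D} but says it goes "along the same lines" as Theorem \ref{t.kz}, with the minimality hypothesis $g\le \eta M^{\sharp,p}_{\beta,\mathcal{B}_0}u$ substituting for Lemma \ref{l.Lip_estimate} in absorbing the factor $\big(M^{\sharp,p}_{\beta,\mathcal{B}_0}u\big)^{-\varepsilon}$, which is precisely what you do. The remaining steps (applying Corollary \ref{c.main} on $B_0=2B$, the pointwise lower bound from $B\in\mathcal{B}_0$, and the doubling estimate $\mu(2B)\le c_\mu\mu(B)$) are all carried out correctly.
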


Theorem \ref{t.kz_D} can also be further strenghtened by using Theorem \ref{t.qp_poincare},
but again we leave details to the interested reader.

\section{Axiomatic Sobolev spaces}\label{s.axiomatic}

A given $\mathcal{D}$-structure gives
rise to a  Sobolev space;  cf.\  \cite{MR1876253}.
Our main result in this section is a certain norm-equivalence 
for such spaces,  Theorem \ref{t.structure}.

\subsection{Sobolev spaces and $\mathcal{D}$-structures}

Let us begin with the definition of an abstract Sobolev space that is
defined in terms of a $\mathcal{D}$-structure.
Our treatment is inspired by \cite{MR1876253}.
See also \cite{MR2039955,MR2508848}
for further references on this type of abstract Sobolev spaces.

\begin{definition}
Given a $\mathcal{D}$-structure $\mathcal{D}$ in $X$, with exponents
$1\le p<\infty$ and $0<\beta\le 1$, the associated Sobolev space $W^{p}_\beta(X,\mathcal{D})$
is the completion \cite{MR992618} of the vector space
\[
\{u\in \mathrm{Lip}_\beta(X)\,:\, \lVert u\rVert_{W^{p}_\beta(X,\mathcal{D})}<\infty\}
\]
that is equipped with the norm\footnote{Conditions (D2) and (D3) imply the properties of a vector space and a norm; cf.\ \cite[Theorem 1.5]{MR1876253}.}
\[
\lVert u\rVert_{W^{p}_\beta(X,\mathcal{D})} = \Bigl( \lVert u\rVert_{L^p(X)}^p+
\inf_{g\in\mathcal{D}(u)} \lVert g\rVert_{L^p(X)}^p\Bigr)^{1/p}\,,\qquad u\in\mathrm{Lip}_\beta(X)\,.
\]
\end{definition}

In order to formulate our results, we need a global version of the maximal function \eqref{d.m_def}.
To this end, we write $\mathcal{B}=\{B\,:\,B\subset X\text{ is a ball}\,\}$
and denote 
\[M^{\sharp,p}_\beta u(x) = M^{\sharp,p}_{\beta,\mathcal{B}} u(x)
=\sup_{x\in B\in\mathcal{B}} \bigg(\frac{1}{\diam(B)^{\beta p}}\vint_B \lvert u(y)-u_B\rvert^p\,d\mu(y)\bigg)^{1/p}\,,\qquad x\in X\,,
\]
whenever $u\colon X\to \R$ is a $\beta$-H\"older function, i.e., $u\in \mathrm{Lip}_\beta(X)$. 

By applying this global maximal function in Theorem \ref{t.structure} below, we provide a structure independent representation
for the Sobolev norm that arises from an appropriate $\mathcal{D}$-structure;
more specifically, we need to additionally assume that 
$\eta M^{\sharp,p}_\beta u\in \mathcal{D}(u)$
whenever $u$ is a $\beta$-H\"older function on $X$.
Here $\eta$ is a constant that is independent of $u$.
 As we will
see, this assumption holds in various applications.

\begin{theorem}\label{t.structure}
Suppose we are given  a $\mathcal{D}$-structure $\mathcal{D}$  in a geodesic space $X$, with
exponents $1<p<\infty$ and $0<\beta\le 1$. Let $\eta>0$ and suppose 
that $\eta M^{\sharp,p}_\beta u\in\mathcal{D}(u)$ for every
$u\in \mathrm{Lip}_\beta(X)$. Then there exists a constant $C=C(K_{p,p},\beta,p,c_\mu,\eta)\ge 1$ such that
\begin{equation}\label{e.norm_equiv_I}
C^{-1}\lVert u\rVert_{W^{p}_\beta(X,\mathcal{D})}\le \big(\lVert u\rVert_{L^p(X)}^p + 
\lVert M^{\sharp,p}_\beta u\rVert_{L^p(X)}^p\big)^{1/p}
\le C\lVert u\rVert_{W^{p}_\beta(X,\mathcal{D})}
\end{equation}
whenever $u\in \mathrm{Lip}_\beta(X)$. 
\end{theorem}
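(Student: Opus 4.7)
The plan is to establish the two halves of \eqref{e.norm_equiv_I} separately, using the hypothesis $\eta M^{\sharp,p}_\beta u\in\mathcal{D}(u)$ to handle the upper bound on $\lVert u\rVert_{W^{p}_\beta(X,\mathcal{D})}$ and Corollary \ref{c.main} together with a globalization step to handle the lower bound. For the upper bound, since $\eta M^{\sharp,p}_\beta u$ is an admissible $\mathcal{D}$-gradient by hypothesis, one immediately has $\inf_{g\in\mathcal{D}(u)}\lVert g\rVert_{L^p(X)}\le \eta\lVert M^{\sharp,p}_\beta u\rVert_{L^p(X)}$; substituting into the definition of the Sobolev norm gives $\lVert u\rVert_{W^{p}_\beta(X,\mathcal{D})}\le (1+\eta^p)^{1/p}\bigl(\lVert u\rVert_{L^p(X)}^p+\lVert M^{\sharp,p}_\beta u\rVert_{L^p(X)}^p\bigr)^{1/p}$.

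For the reverse inequality it suffices, after infimizing over $g$, to prove $\lVert M^{\sharp,p}_\beta u\rVert_{L^p(X)}\le C'\lVert g\rVert_{L^p(X)}$ for every $g\in\mathcal{D}(u)$. The key input is Corollary \ref{c.main} specialized to the endpoint $\varepsilon=0$, which for every ball $B_0\subset X$ provides
\[
\int_{B_0}\bigl(M^{\sharp,p}_{\beta,\mathcal{B}_0}u\bigr)^p\,d\mu\le C'\int_{B_0}g^p\,d\mu,
\]
with a constant $C'=C'(K_{p,p},\beta,p,c_\mu)$ that is crucially independent of $B_0$; note that at $\varepsilon=0$ the factor $(M^{\sharp,p}_{\beta,\mathcal{B}_0}u)^{-\varepsilon}$ collapses to $1$ on the set of integration and the excluded set $\{M^{\sharp,p}_{\beta,\mathcal{B}_0}u=0\}$ contributes nothing to either side.

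To pass to the global maximal function, I would fix $x_0\in X$ and consider the sequence $B_0=B(x_0,R)$ as $R\to\infty$ when $X$ is unbounded, or take $B_0=X$ directly if $X$ is bounded. Since $\mathcal{B}_0\subset\mathcal{B}_0'$ whenever $B_0\subset B_0'$, and any fixed ball $B\subset X$ eventually satisfies $2B\subset B(x_0,R)$, the localized maximal functions $M^{\sharp,p}_{\beta,\mathcal{B}_0}u$ increase monotonically and pointwise to $M^{\sharp,p}_\beta u$ as $R\to\infty$. Applying monotone convergence simultaneously to the integrand and to the expanding domain $B_0$ then upgrades the local estimate to $\int_X \bigl(M^{\sharp,p}_\beta u\bigr)^p\,d\mu\le C'\int_X g^p\,d\mu$, which after infimizing over $g$ combines with the trivial bound $\lVert u\rVert_{L^p(X)}\le \lVert u\rVert_{W^{p}_\beta(X,\mathcal{D})}$ to yield the reverse inequality. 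The only delicate point is confirming that the constant in Corollary \ref{c.main} is genuinely $B_0$-independent, which is built into its statement and therefore poses no real obstacle.
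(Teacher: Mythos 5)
Your proposal is correct and follows essentially the same route as the paper: the left inequality comes directly from the hypothesis $\eta M^{\sharp,p}_\beta u\in\mathcal{D}(u)$, and the right inequality is obtained by applying the absorbed local estimate (the paper cites Theorem \ref{t.main_local} with $\varepsilon=0$, which is exactly Corollary \ref{c.main} at $\varepsilon=0$) on expanding balls $B(x_0,j)$ and passing to the limit, the paper using Fatou where you use monotone convergence. The two limit arguments are interchangeable here, and your observation that the constant is independent of $B_0$ is precisely the point the paper relies on as well.
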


\begin{remark}
Inequality \eqref{e.norm_equiv_I} holds also when either one of the two quantities 
\[
\lVert u\rVert_{W^{p}_\beta(X,\mathcal{D})}\,,\qquad \big(\lVert u\rVert_{L^p(X)}^p + 
\lVert M^{\sharp,p}_\beta u\rVert_{L^p(X)}^p\big)^{1/p}
\]
is infinite. In this case we can conclude that actually both of these quantities are infinite.
\end{remark}

\begin{proof}[Proof of Theorem \ref{t.structure}]
The left inequality in~\eqref{e.norm_equiv_I} follows from the definitions and the assumption
that $\eta M^{\sharp,p}_\beta u\in\mathcal{D}(u)$ for every
$u\in \mathrm{Lip}_\beta(X)$. 
To prove the right inequality, we fix a point $x_0\in X$ and denote $B_j=B(x_0,j)$ and
 $\mathcal{B}_j=\{B=B(x,r)\,:\,2B\subset B_j\}$ for $j\in \N$.

Fix $u\in \Lip_\beta(X)$ and $g\in \mathcal{D}(u)$.
Observe that
\[
M^{\sharp,p}_\beta u(x)= \lim_{j\to \infty} \big(\mathbf{1}_{B_j}(x) M^{\sharp,p}_{\beta,\mathcal{B}_j} u(x)\big)
\]
whenever $x\in X$.
Hence, by Fatou's lemma and Theorem \ref{t.main_local}, with $\varepsilon=0$, we obtain  that
\begin{align*}
\int_{X}\big( M^{\sharp,p}_{\beta} u(x)\big)^{p}\,d\mu(x)
&\le \liminf_{j\to\infty} \int_{B_j} \big( M^{\sharp,p}_{\beta,\mathcal{B}_j} u(x)\big)^{p}\,d\mu(x)
\\&\le C(K_{p,p},\beta,p,c_\mu) \liminf_{j\to\infty}\int_{B_j}  g(x)^p\,d\mu(x)
\\
&\le C(K_{p,p},\beta,p,c_\mu) \int_X g(x)^p\,d\mu(x)\,.
\end{align*}
The right inequality in \eqref{e.norm_equiv_I}
follows by infimizing the above estimate over all $g\in\mathcal{D}(u)$.
\end{proof}

\subsection{Universality of Haj{\l}asz--Sobolev spaces}
By using Theorem \ref{t.structure}, we shall now provide
isomorphic representatives for the abstract Sobolev spaces
in terms of certain Haj{\l}asz--Sobolev spaces. 
This can be done as follows if $1<p<\infty$ and $0<\beta\le 1$.

For each $\beta$-H\"older function $u\colon X\to \R$, we let 
$\mathcal{D}_H^{\beta,p}(u)\not=\emptyset$ 
be the family of all measurable functions $g\colon X\to [0,\infty]$ such that
\begin{equation}\label{e.hajlasz}
\lvert u(x)-u(y)\rvert \le d(x,y)^\beta\big( g(x)+g(y) \big)
\end{equation}
almost everywhere, i.e., there exists an exceptional set $N=N(g)\subset X$ for which $\mu(N)=0$ and
inequality \eqref{e.hajlasz} holds for every $x,y\in X\setminus N$. 

As we will see below, this construction gives  a $\mathcal{D}$-structure
\[
\mathcal{D}_{H}^{\beta,p}=\{\mathcal{D}_{H}^{\beta,p}(u)\,:\, u\in \Lip_\beta(X)\},
\] 
with exponents $p$ and $\beta$, and with constants $K=2^p$ and $\tau=1$.
The associated abstract Sobolev space is the so-called {\em Hajlasz--Sobolev
space} that is denoted by \[M^{\beta,p}(X)=W^{p}_\beta(X,\mathcal{D}_{H}^{\beta,p})\,.\] This space
has been studied, e.g., in  \cite{MR2039955,MR1681586,MR1800917}.
Our approach via completion is 
not standard. However, by the
known density results of H\"older-functions \cite[Proposition 4.5]{MR3357989}, the perhaps more conventional definition 
\cite[pp.~194--195]{MR3357989}
yields an isomorphic Banach space.

Returning to the  $\mathcal{D}$-structure conditions, it is straightforward to verify that condition (D1) is valid; in fact, even the stronger $(p,p)$-Poincar\'e inequality condition (D1')
in Theorem \ref{t.pp_poincare}
holds with a constant  $K_{p,p}=2^p$; cf.\ \cite[Theorem 5.15]{MR1800917}. The two conditions (D2) and (D3) are also satisfied; we leave details to the reader. The
validity of the last condition (D4) is a consequence of 
the following lemma.

\begin{lemma}\label{l.leibniz}
Let $1<p<\infty$ and $0<\beta\le 1$, and fix a Borel set $E\subset X$.
Let $u\colon X\to \R$ be a $\beta$-H\"older function and
suppose that $v\colon X\to \R$ 
is such that $v|_{X\setminus E} =u|_{X\setminus E}$ and
there exists a constant $\kappa\ge 0$ such that
$\lvert v(x)-v(y)\rvert \le \kappa\, d(x,y)^\beta$
for all $x,y\in X$.
 Then
\[
g_v=\kappa\, \mathbf{1}_{E} + g_u\mathbf{1}_{X\setminus E} \in \mathcal{D}_{H}^{\beta,p}(v)
\]
whenever $g_u\in \mathcal{D}_H^{\beta,p}(u)$.
\end{lemma}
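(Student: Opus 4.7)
The plan is to verify pointwise, outside the exceptional set of $g_u$, that the defining inequality for membership in $\mathcal{D}_H^{\beta,p}(v)$ holds, via a case analysis based on whether each of the two test points lies in $E$ or in $X\setminus E$. Note first that $g_v$ is measurable since $E$ is Borel and both $\kappa\mathbf{1}_E$ and $g_u\mathbf{1}_{X\setminus E}$ are measurable, so only the Hajlasz-type inequality remains to be checked.

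Fix $g_u\in\mathcal{D}_H^{\beta,p}(u)$ and let $N=N(g_u)\subset X$ be a measurable exceptional set with $\mu(N)=0$ such that
\[
\lvert u(x)-u(y)\rvert\le d(x,y)^\beta\big(g_u(x)+g_u(y)\big)
\]
for all $x,y\in X\setminus N$. I take $N$ as the exceptional set for $g_v$ as well and fix $x,y\in X\setminus N$. In the case $x,y\in X\setminus E$, we have $v(x)=u(x)$, $v(y)=u(y)$ and $g_v(x)=g_u(x)$, $g_v(y)=g_u(y)$, so the required inequality reduces directly to the analogous inequality for $u$ and $g_u$. In the case $x,y\in E$, we have $g_v(x)=g_v(y)=\kappa$, and the $\beta$-H\"older assumption on $v$ gives
\[
\lvert v(x)-v(y)\rvert\le \kappa\, d(x,y)^\beta\le d(x,y)^\beta\big(g_v(x)+g_v(y)\big).
\]

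In the remaining (symmetric) case, say $x\in E$ and $y\in X\setminus E$, we have $g_v(x)=\kappa$ and $g_v(y)=g_u(y)\ge 0$, and again from the $\beta$-H\"older bound on $v$,
\[
\lvert v(x)-v(y)\rvert\le \kappa\,d(x,y)^\beta\le d(x,y)^\beta\big(\kappa+g_u(y)\big)=d(x,y)^\beta\big(g_v(x)+g_v(y)\big).
\]
This exhausts the cases, so $g_v\in\mathcal{D}_H^{\beta,p}(v)$, completing the proof. There is no genuine obstacle here: the argument is a direct case split, and the only mildly delicate point is that we are free to use the global H\"older constant $\kappa$ of $v$ even when only one of the two points lies in $E$, which is why the definition of $g_v$ does not need to record any information on $X\setminus E$ beyond $g_u$.
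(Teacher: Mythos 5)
Your proof is correct and follows essentially the same route as the paper: both verify the Haj{\l}asz inequality pointwise outside the exceptional set $N(g_u)$ by a case split, using the $\mathcal{D}_H^{\beta,p}(u)$-inequality when both points lie in $X\setminus E$ and the global $\beta$-H\"older bound on $v$ (together with $g_v\ge\kappa$ at any point of $E$) whenever at least one point lies in $E$. The only cosmetic difference is that you treat the cases ``both in $E$'' and ``one in each'' separately, whereas the paper merges them into a single case ``$x\in E$ or $y\in E$''.
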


\begin{proof}
Fix a function $g_u\in \mathcal{D}_H^{\beta,p}(u)$ and let 
$N\subset X$ be the exceptional set such that $\mu(N)=0$ and 
inequality \eqref{e.hajlasz} holds for every $x,y\in X\setminus N$ and with $g=g_u$.

Fix $x,y\in X\setminus N$. If $x,y\in X\setminus E$, then
\[
\lvert v (x)-v(y)\rvert =\lvert u (x)-u(y)\rvert 
\le d(x,y)^\beta \big(g_u(x)+g_u(y)\big) = d(x,y)^\beta \big(g_v (x)+g_v (y)\big)\,.
\]
If $x\in E$ or $y\in E$, then
\[
\lvert v (x)-v(y)\rvert \le \kappa\, d(x,y)^\beta\le d(x,y)^\beta\big(g_v (x)+g_v (y)\big)\,.
\]
By combining the estimates above, we find that
\[
\lvert v (x)-v(y)\rvert \le d(x,y)^\beta\big(g_v (x)+g_v (y)\big)
\]
whenever $x,y\in X\setminus N$. The desired conclusion $g_v\in\mathcal{D}_H^{\beta,p}(v)$ follows.
\end{proof}

The following corollary is a universality 
result for  Haj{\l}asz--Sobolev spaces $M^{\beta,p}(X)$. Namely,  
any abstract Sobolev space, rising from a suitable $\mathcal{D}$-structure, turns to be isomorphic to this particular Sobolev space $M^{\beta,p}(X)$.

\begin{corollary}\label{c.universal}
Suppose we are given  a $\mathcal{D}$-structure $\mathcal{D}_A$ in a geodesic space $X$, with
exponents $1<p<\infty$ and $0<\beta\le 1$.
Assume that there exists $\eta>0$ such 
that $\eta M^{\sharp,p}_\beta u\in\mathcal{D}_A(u)$ for every
$u\in \mathrm{Lip}_\beta(X)$.
Let $W^{\beta}_p(X)=W^{\beta}_p(X,\mathcal{D}_A)$.
 Then there exists
$C\ge 1$ such that
\begin{equation}\label{e.norm_equiv}
C^{-1}\lVert u\rVert_{W^p_\beta(X)}\le \lVert u\rVert_{M^{\beta,p}(X)}
\le C\lVert u\rVert_{W^p_\beta(X)}
\end{equation}
whenever $u\in \mathrm{Lip}_\beta(X)$.
Moreover, there exists a unique Banach-space isomorphism between the spaces $W^{p}_\beta(X)$ and $M^{\beta,p}(X)$ which is the identity on 
$W^{p}_\beta(X)\cap \mathrm{Lip}_\beta(X)=M^{\beta,p}(X)\cap \mathrm{Lip}_\beta(X)$.
\end{corollary}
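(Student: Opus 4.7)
The strategy has two parts: first prove the pointwise norm equivalence \eqref{e.norm_equiv} on $\mathrm{Lip}_\beta(X)$ via two applications of Theorem \ref{t.structure}; second, upgrade this to a Banach-space isomorphism via the universal property of completions.

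For the first part, Theorem \ref{t.structure} applied to the given structure $\mathcal{D}_A$ (whose standing hypothesis $\eta M^{\sharp,p}_\beta u\in\mathcal{D}_A(u)$ is precisely what that theorem requires) yields
\[
\lVert u\rVert_{W^p_\beta(X,\mathcal{D}_A)} \simeq \bigl(\lVert u\rVert_{L^p(X)}^p + \lVert M^{\sharp,p}_\beta u\rVert_{L^p(X)}^p\bigr)^{1/p}
\]
for every $u\in\mathrm{Lip}_\beta(X)$. The plan is to apply the same theorem a second time to the Haj{\l}asz structure $\mathcal{D}_H^{\beta,p}$, which has already been verified in the text to satisfy (D1')--(D4). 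This produces an identical equivalence for $\lVert u\rVert_{M^{\beta,p}(X)}$, and chaining the two equivalences gives \eqref{e.norm_equiv}.

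The key technical step — and the only real obstacle — is to verify that $\mathcal{D}_H^{\beta,p}$ also satisfies the extra hypothesis of Theorem \ref{t.structure}, namely that there is a constant $\eta'>0$ such that $\eta' M^{\sharp,p}_\beta u\in\mathcal{D}_H^{\beta,p}(u)$ for every $u\in\mathrm{Lip}_\beta(X)$. This amounts to showing the pointwise inequality
\[
\lvert u(x)-u(y)\rvert \le \eta' d(x,y)^\beta\bigl(M^{\sharp,p}_\beta u(x) + M^{\sharp,p}_\beta u(y)\bigr)
\]
for $\mu$-almost every $x,y\in X$. I would obtain this by essentially the same telescoping argument carried out in Lemma \ref{l.arm_local}, but now with the global family $\mathcal{B}$ replacing $\mathcal{B}_0$: at Lebesgue points $x,y$ one telescopes along the balls $B(x,2^{-i}d(x,y))$ and $B(y,2^{-i}d(x,y))$, estimating each oscillation by $M^{\sharp,p}_\beta u$ as in the proof of Lemma \ref{l.arm_local}. (Note that the global setup is, if anything, simpler than the localized one, since no Whitney geometry enters.)

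For the second part of the corollary, the norm equivalence \eqref{e.norm_equiv} immediately implies that the two vector spaces
\[
V_A = \{u\in\mathrm{Lip}_\beta(X)\,:\,\lVert u\rVert_{W^p_\beta(X,\mathcal{D}_A)}<\infty\}, \qquad V_H = \{u\in\mathrm{Lip}_\beta(X)\,:\,\lVert u\rVert_{M^{\beta,p}(X)}<\infty\}
\]
coincide as sets, and the identity $V_A\to V_H$ is a bi-Lipschitz linear bijection between two dense subspaces of $W^p_\beta(X,\mathcal{D}_A)$ and $M^{\beta,p}(X)$ respectively. By the universal property of the Banach-space completion, this identity extends uniquely to a bounded linear isomorphism $T\colon W^p_\beta(X,\mathcal{D}_A)\to M^{\beta,p}(X)$ whose inverse is bounded with the same equivalence constants, and which restricts to the identity on the common subspace $W^p_\beta(X)\cap\mathrm{Lip}_\beta(X)=M^{\beta,p}(X)\cap\mathrm{Lip}_\beta(X)$. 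Uniqueness of $T$ follows because any two such extensions agree on a dense subspace and are continuous.
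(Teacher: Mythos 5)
Your proposal is correct and follows essentially the same route as the paper: verify that a constant multiple of $M^{\sharp,p}_\beta u$ is a Haj{\l}asz gradient via the telescoping argument of Lemma \ref{l.arm_local} (the paper phrases this as "modifying the proof of inequality \eqref{e.des_hld}"), apply Theorem \ref{t.structure} to both $\mathcal{D}_A$ and $\mathcal{D}_H^{\beta,p}$, and chain the equivalences; the completion/universal-property argument you spell out for the isomorphism is exactly what the paper leaves implicit. (Minor remark: since $u$ is $\beta$-H\"older, the telescoping works at every point, so no restriction to Lebesgue points is needed.)
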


\begin{proof}
By modifying the proof of inequality \eqref{e.des_hld}, 
it is straightforward to show that
\[C(\beta,c_\mu) M^{\sharp,p}_\beta u\in\mathcal{D}_{H}^{\beta,p}(u)\,,\qquad u\in\mathrm{Lip}_\beta(X)\,.\]
Applying Theorem \ref{t.structure} with the two 
$\mathcal{D}$-structures 
$\mathcal{D}_A$ and $\mathcal{D}_{H}^{\beta,p}$
yields the claim. Indeed, recall
that by definitions $W^{\beta}_p(X)=W^{\beta}_p(X,\mathcal{D}_A)$ and $M^{\beta,p}(X)=W^{p}_\beta(X,\mathcal{D}^{\beta,p}_H)$.
\end{proof}

\subsection{Universality of Newtonian spaces}
Fix $u\in \mathrm{Lip}(X)$ and $1<p<\infty$.
Recall
from \S\ref{s.Keith_Zhong} that  $\mathcal{D}^{1,p}_N(u)$ is
the family of all  $p$-weak upper gradients $g\in L^p_{\textup{loc}}(X)$ of the function $u$. 
Arguing as in  \S\ref{s.Keith_Zhong}, we find that 
\[
\mathcal{D}^{1,p}_N=\{\mathcal{D}^{1,p}_N(u)\,:\,u\in\Lip(X)\}
\] 
is a $\mathcal{D}$-structure with exponents $\beta=1$ and $p$, if we {\em assume} that the $(1,p)$-Poincar\'e  inequality condition (D1) holds.
The associated abstract Sobolev space
is the so-called {\em Newtonian space} 
\[N^{1,p}(X)=W^p_1(X,\mathcal{D}^{1,p}_N)\,.\]
We remark that this notation is not entirely standard
since Lipschitz functions need not be dense when the more conventional approach 
\cite{MR1809341}, \cite[Definition 1.17]{MR2867756} to the Newtonian space is adopted.
However, when $X$
supports a $(1,p)$-Poincar\'e inequality \eqref{e.1p}
for all measurable functions instead of Lipschitz functions only, then the density
result holds and the two definitions give
isomorphic Banach spaces; c.f. \cite[Theorem 5.1]{MR2867756}.

The following corollary   extends and complements   \cite[Theorem 4.9]{MR1809341} and \cite[Theorem 4.3]{MR2508848}. Observe that, by Corollary \ref{c.universal} and transitivity, it also produces a universality result for
Newtonian spaces $N^{1,p}(X)$.

\begin{corollary}
Suppose that $X$ is a geodesic space and fix $1<p<\infty$.
Suppose that there are constants $K>0$ and $\tau \ge 1$ such that
the $(1,p)$-Poincar\'e inequality
\begin{equation}\label{e.1p}
\vint_{B} \lvert u(x)-u_{B}\rvert\,d\mu(x)
\le K^{1/p}\mathrm{diam}(B)\bigg(\vint_{\tau B} g(x)^p \,d\mu(x)\bigg)^{1/p}
\end{equation}
holds whenever $B$ is a ball in $X$ and $g\in L^p_{\mathrm{loc}}(X)$ is a $p$-weak upper gradient of $u\in \mathrm{Lip}(X)$. 
Then there exists a constant $C\ge 1$ such that
\[ %begin{equation}\label{e.norm_equiv_S}
C^{-1}\lVert u\rVert_{N^{1,p}(X)}\le \lVert u\rVert_{M^{1,p}(X)}
\le C\lVert u\rVert_{N^{1,p}(X)}
\] %end{equation}
whenever $u\in \mathrm{Lip}(X)$.
Moreover, there exists a unique Banach-space isomorphism between the spaces $M^{1,p}(X)$ and $N^{1,p}(X)$ which is the identity on $N^{1,p}(X)\cap \mathrm{Lip}(X)=M^{1,p}(X)\cap \mathrm{Lip}(X)$.
\end{corollary}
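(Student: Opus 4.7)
The plan is to apply Corollary \ref{c.universal} to the $\mathcal{D}$-structure $\mathcal{D}_A = \mathcal{D}^{1,p}_N$ with exponent $\beta = 1$. As noted in the preceding discussion in \S\ref{s.axiomatic}, the family $\mathcal{D}^{1,p}_N$ is indeed a $\mathcal{D}$-structure in $X$ under the standing hypotheses: (D2), (D3) are standard for minimal $p$-weak upper gradients, (D4) follows from the gluing lemma in \cite{MR2867756}, and (D1) is precisely the assumed $(1,p)$-Poincar\'e inequality. The only remaining ingredient for Corollary \ref{c.universal} is to exhibit an $\eta>0$ such that $\eta M^{\sharp,p}_1 u\in \mathcal{D}^{1,p}_N(u)$ for every $u\in \mathrm{Lip}(X)$.

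For the maximal function condition, I would fix $u\in \mathrm{Lip}(X)$ with minimal $p$-weak upper gradient $g_u$ and upgrade Lemma \ref{l.Lip_estimate} from a local to a global estimate. For each ball $B_0\subset X$ that lemma provides a local bound $g_u\le C(c_\mu) M^{\sharp,p}_{1,\mathcal{B}_0} u$ almost everywhere in $B_0$, and since $\mathcal{B}_0$ is a subfamily of the family $\mathcal{B}$ of all balls, its associated sharp maximal function is dominated pointwise by the global one $M^{\sharp,p}_1 u$. Exhausting $X$ by an increasing sequence of balls centered at a fixed point then yields $g_u\le C(c_\mu)M^{\sharp,p}_1 u$ almost everywhere on $X$. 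Since $u$ is Lipschitz, $M^{\sharp,p}_1 u$ is bounded, hence in $L^p_{\mathrm{loc}}(X)$, and any such measurable function dominating $g_u$ almost everywhere is itself a $p$-weak upper gradient of $u$. Consequently $\eta M^{\sharp,p}_1 u\in\mathcal{D}^{1,p}_N(u)$ with $\eta=C(c_\mu)$.

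With both hypotheses of Corollary \ref{c.universal} verified, that corollary (applied with $W^{p}_1(X)=N^{1,p}(X)$) immediately delivers the desired norm equivalence on $\mathrm{Lip}(X)$. For the second assertion, the norm equivalence shows that $N^{1,p}(X)\cap \mathrm{Lip}(X)$ and $M^{1,p}(X)\cap \mathrm{Lip}(X)$ coincide as sets with comparable norms. Since by definition each of $N^{1,p}(X)$ and $M^{1,p}(X)$ is the completion of this common normed vector space (with respect to equivalent norms), elementary functional analysis extends the identity uniquely to a Banach-space isomorphism between the two completions, which is the identity on the dense subspace of Lipschitz functions.

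The main obstacle is mostly bookkeeping: every substantive ingredient has already been developed in the paper. The only genuinely new step is the global upgrade of Lemma \ref{l.Lip_estimate}, but because the localized sharp maximal function is trivially dominated by the global one, this reduces to a routine exhaustion. A minor care point is reconciling the paper's completion-based definition of $N^{1,p}(X)$ with more conventional definitions in the literature, but the density of Lipschitz functions under the $(1,p)$-Poincar\'e assumption, as remarked after the definition of $N^{1,p}(X)$, ensures the two viewpoints agree up to isomorphism.
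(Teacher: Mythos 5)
Your proposal is correct and follows essentially the same route as the paper: both reduce the statement to Corollary \ref{c.universal} after checking that a fixed constant multiple of $M^{\sharp,p}_1 u$ lies in $\mathcal{D}^{1,p}_N(u)$ for every $u\in\mathrm{Lip}(X)$. The only cosmetic difference is that the paper obtains this by re-running the curve-discretization argument from the proof of Lemma \ref{l.Lip_estimate} globally, whereas you invoke that lemma as stated, exhaust $X$ by balls to get $g_u\le C(c_\mu)M^{\sharp,p}_1 u$ almost everywhere, and then use the standard fact that a measurable a.e.-majorant of a $p$-weak upper gradient is again a $p$-weak upper gradient; both variants are sound.
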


\begin{proof}
Arguing as in \S\ref{s.Keith_Zhong}, we obtain
a constant $C(1,c_\mu)>0$ such that
$4C(1,c_\mu)M^{\sharp,p}_1 u\in L^p_{\textup{loc}}(X)$ is a $p$-weak upper gradient
of any given $u\in\mathrm{Lip}(X)$.
The claim follows from Corollary \ref{c.universal}.
\end{proof}

\bibliographystyle{abbrv}
\def\cprime{$'$} \def\cprime{$'$} \def\cprime{$'$}

\end{document}